\newtheorem{theo}{Theorem} 
\newtheorem{lemma}{Lemma}[section]
\newtheorem{prop}[lemma]{Proposition}
\newtheorem{claim}[lemma]{Claim}
\newcommand{\vecc}{\overrightarrow}
\theoremstyle{remark}
\newtheorem{remark}[lemma]{Remark}
\theoremstyle{definition}
\newcommand{\cu}{\check{u}}
\newcommand{\cU}{\check{U}}
\newcommand{\ch}{\check{h}}
\newcommand{\cw}{\check{w}}
\newcommand{\lin}{\textsc{l}}
\newcommand{\NN}{\mathbb{N}}
\newcommand{\RR}{\mathbb{R}}
\newcommand{\eps}{\varepsilon}
\newcommand{\AAA}{\mathcal{A}}
\newcommand{\III}{\mathcal{I}}
\newcommand{\JJJ}{\mathcal{J}}
\newcommand{\PPP}{\mathcal{P}}
\newcommand{\tV}{\widetilde{V}}
\newcommand{\lf}{\left}
\newcommand{\rg}{\right}
\newcommand{\tih}{\tilde{h}}
\newcommand{\ttt}{\tilde{t}}
\newcommand{\tw}{\widetilde{w}}
\newcommand{\trho}{\tilde{\rho}}
\newcommand{\indic}{1\!\!1}
\newcommand{\tU}{\widetilde{U}}
\newcommand{\tu}{\widetilde{u}}
\newcommand{\loc}{\rm loc}
\newcommand{\hdot}{\dot{H}^1}
\newcommand{\EMPH}[1]{\medskip\noindent\textit{#1}.}
\DeclareMathOperator{\rad}{rad}
\DeclareMathOperator{\supp}{supp}
\numberwithin{equation}{section} 
\title[Radial critical wave equation]{Classification of radial solutions of the focusing, energy-critical wave equation}
\author[T.~Duyckaerts]{Thomas Duyckaerts$^1$}
\author[C.~Kenig]{Carlos Kenig$^2$}
\author[F.~Merle]{Frank Merle$^3$}
\thanks{$^1$LAGA, Universit\'e Paris 13 (UMR 7539). Partially supported by ERC Grant Dispeq and ERC advanced grant  no. 291214, BLOWDISOL}
\thanks{$^2$University of Chicago. Partially supported by NSF Grant DMS-0968472}
\thanks{$^3$Cergy-Pontoise (UMR 8088), IHES. Partially supported by ERC advanced grant  no. 291214, BLOWDISOL}
\date{\today}
\begin{document}
\begin{abstract}
In this paper, we describe the asymptotic behaviour of globally defined solutions and of bounded solutions blowing up in finite time of the radial energy-critical focusing non-linear wave equation in three space dimension. 
\end{abstract}

\maketitle

\section{Introduction and main result}
In this work we consider the energy-critical focusing non-linear wave equation in space dimension $3$:
\begin{equation}
\label{CP}
\left\{ 
\begin{gathered}
\partial_t^2 u -\Delta u-u^5=0,\quad (t,x)\in I\times \RR^3\\
u_{\restriction t=0}=u_0\in \hdot,\quad \partial_t u_{\restriction t=0}=u_1\in L^2,
\end{gathered}\right.
\end{equation}
where $I$ is an interval ($0\in I$), $u$ is real-valued, $\hdot:=\hdot(\RR^3)$, and $L^2:=L^2(\RR^3)$.

More precisely, we are interested in the so-called ``soliton resolution conjecture'' for radial solutions of \eqref{CP}. There has been a widespread belief in the mathematical physics community that, for large global solutions of dispersive equations, the evolution asymptotically decouples for large time into a sum of modulated solitons, a free radiation term and a term which goes to zero at infinity (see \cite{Miura76}, \cite{Segur73}, \cite{Schuur86BO}, \cite{Tao09BAMS}, \cite{Tao04DPDE}, \cite{IvancevicIvancevic10BO}). Such a result should hold for
globally well-posed equations (see \cite{Chatterjee2012P} for a recent result in this direction for mass-subcritical NLS), or in general  with the additional imposition that the solution does not
blow up. 
When blow-up may occur, such decompositions are always expected to be unstable, see Remark \ref{R:stability} below. So far, to the authors knowledge, the only cases when a result of this type are proved are for the integrable KdV for data with regularity and decay, due to Eckhaus and Schuur (see \cite{EcSc83}, \cite{Eckhaus86}) and for the integrable mKdV (see \cite{Schuur86BO}). Note that even the radial case of this conjecture is considered quite challenging (see \cite{Tao04DPDE} for example), and also that in the one dimensional case, only integrable models have been treated rigorously (see also \cite{ZakharovShabat71}, \cite{SegurAblowitz76} for heuristics in the case of the cubic NLS in one space dimension).

In the case of equation \eqref{CP}, since we are dealing with radial solutions, the solitons are of the form 
$\pm\frac{1}{\mu^{\frac{1}{2}}} W\left(\frac{x}{\mu}\right)$,  $ \mu>0$,
 where 
 $$W=\left(1+\frac{|x|^2}{3}\right)^{-\frac 12}$$ is the radial positive solution of $\Delta W+W^5=0$. 

Previous results for the equation \eqref{CP} dealt with solutions close to $W$ (see \cite{DuKeMe11a} for the radial case and \cite{DuKeMe10P} for the nonradial case) and for large, radial solutions in the case when the asymptotics hold along a specific sequence, and the solution is assumed to be bounded in norm \cite{DuKeMe11P}. Since we are in a critical case, there is another regime in which one expects a similar decomposition, that is for solutions which blow up in finite time, but with bounded critical norm. We also establish such a result in this paper for radial solutions of \eqref{CP}.

Results of this type for other equations in the global case include those for data close to the soliton for subcritical nonlinearities: see the works of Martel and Merle \cite{MaMe01} in the case of generalized KdV equations, of Buslaev and Perelman \cite{BuslaevPerelman92a,BuslaevPerelman92b} in the case of one dimensional NLS with specific nonlinearities, of Soffer and Weinstein \cite{SofferWeinstein90} for NLS with specific non-linearities in higher dimensions. For critical nonlinearities, see Martel and Merle for generalized KdV \cite{MaMe00}. For the finite time blow-up case in a critical setting, close to the soliton, we have the work of Martel and Merle \cite{MaMe02} for the critical generalized KdV, and of Rapha\"el and Merle \cite{MeRa04,MeRa05b} for the mass-critical NLS. 
In the finite time blow-up case, there are some large data results for critical equivariant wave-maps into the sphere due to Christodoulou\,-\,Tahvildar-Zadeh, Shatah\,-\,Tahvildar-Zadeh and Struwe (see \cite{ChTZ93,ShTZ97,Struwe02}) which show convergence along some sequence of times, locally to a soliton (harmonic map). In the case where a global Lyapunov functional is present in self-similar variables, results have been obtained for one dimensional wave equation in the work of Merle and Zaag \cite{MeZa08P}.
For formation of similar structures  (towering bubbles) for critical elliptic equations, for example on domains excluding a small ball, as the size of the ball goes to $0$, see the work of Musso and Pistoia \cite{MussoPistoia06} and references therein. 

We now turn to the description of our result.

We will restrict ourselves to the case of radial solutions, and denote by $r=|x|$ the radial variable. 
The equation \eqref{CP} is well-posed in $\hdot\times L^2$. We will  denote by $(T_-(u),T_+(u))$ the maximal interval of existence of $u$. On this interval of existence, the energy:
$$ E(u(t),\partial_tu(t))=\frac 12 \int |\nabla u(t,x)|^2\,dx+\frac{1}{2}\int (\partial_tu(t,x))^2\,dx-\frac 16 \int (u(t,x))^6\,dx$$
is conserved.

In all the paper, if $f$ and $g$ are two positive functions defined in a neighborhood of $\ell\in \RR\cup\{\pm\infty\}$, we will write 
$$f(t)\ll g(t) \text{ as }t\to \ell \text{ if and only if }
\lim_{t\to \ell}\frac{f(t)}{g(t)}=0.$$

\begin{theo}
\label{T:main}
 Let $u$ be a radial solution of \eqref{CP} and $T_+=T_+(u)$. Then one of the following holds:
\begin{itemize}
 \item \textbf{Type I blow-up:} $T_+<\infty$ and 
\begin{equation}
\label{BUp}
\lim_{t\to T_+} \|(u(t),\partial_tu(t)\|_{\hdot\times L^2}=+\infty. 
\end{equation} 
\item \textbf{Type II blow-up:} $T_+<\infty$ and there exist $(v_0,v_1)\in \hdot\times L^2$, an integer $J\in \NN\setminus\{0\}$, and for all $j\in \{1,\ldots,J\}$, a sign $\iota_j\in \{\pm 1\}$, and a positive function $\lambda_j(t)$ defined for $t$ close to $T_+$ such that
\begin{gather}
\label{hyp_lambda_bup}
 \lambda_1(t)\ll \lambda_2(t)\ll \ldots \ll\lambda_{J}(t)\ll T_+-t\text{ as }t\to T_+\\
\label{expansion_u_bup}
\lim_{t\to T_+}
\left\|(u(t),\partial_tu(t))-\left(v_{0}+\sum_{j=1}^J\frac{\iota_j}{\lambda_j^{1/2}(t)}W\left(\frac{x}{\lambda_{j}(t)}\right),v_{1}\right)\right\|_{\hdot\times L^2}=0.
\end{gather}
\item \textbf{Global solution:} $T_+=+\infty$ and there exist a solution $v_{\lin}$ of the linear wave equation, an integer $J\in \NN$, and for all $j\in \{1,\ldots,J\}$, a sign $\iota_j\in \{\pm 1\}$, and a positive function $\lambda_j(t)$ defined for large $t$ such that
\begin{gather}
\label{hyp_lambda}
 \lambda_1(t)\ll \lambda_2(t)\ll \ldots \ll\lambda_{J}(t)\ll t\text{ as }t\to +\infty\\
\label{expansion}
\lim_{t\to+\infty}
\left\|(u(t),\partial_tu(t))-\left(v_{\lin}(t)+\sum_{j=1}^J\frac{\iota_j}{\lambda_j^{1/2}(t)}W\left(\frac{x}{\lambda_{j}(t)}\right),\partial_tv_{\lin}(t)\right)\right\|_{\hdot\times L^2}=0.
\end{gather}
\end{itemize}
\end{theo}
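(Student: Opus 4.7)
\EMPH{Proof proposal} The three alternatives are exhaustive and mutually exclusive (Type I is defined as the complement of bounded critical norm), so the task is to establish the soliton decomposition under the a priori bound $\sup_{t\in[0,T_+)}\|(u(t),\partial_tu(t))\|_{\hdot\times L^2}<\infty$ in both the Type II and the global case. My plan runs in parallel in the two regimes: first extract a ``dispersive'' component, then show that the residue is precompact in $\hdot\times L^2$ modulo scaling, apply a profile decomposition whose nonlinear profiles are forced to be rescaled copies of $\pm W$, and finally upgrade the resulting subsequential statement to a full-time limit. For Type II blow-up, finite speed of propagation combined with small-data scattering will show that outside any shrunk cone $\{|x|\ge T_+-t+\delta\}$ the solution remains uniformly regular and admits a limit $(v_0,v_1)\in\hdot\times L^2$ as $t\to T_+$, which one extends to $\RR^3$ via cut-offs and iteration. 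In the global case a parallel construction outside $\{|x|\ge t\}$ will extract the free radiation $v_{\lin}$.

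Set $a(t):=u(t)-v(t)$, where $v$ is the nonlinear evolution of $(v_0,v_1)$ in the Type II case and $v(t)=v_{\lin}(t)$ in the global case; by construction $a$ carries no energy outside the light cone in the limit. The ``channels of energy'' lower bounds for the free wave equation in odd spatial dimensions then force the trajectory of $a$ to be precompact in $\hdot\times L^2$ modulo scaling, since any non-compact component would leak a definite amount of energy into the exterior region. Along a sequence $t_n\to T_+$ (or $+\infty$), applying the Bahouri--G\'erard profile decomposition to $(a(t_n),\partial_ta(t_n))$ and again invoking exterior-energy estimates to rule out non-stationary nonlinear profiles, every profile reduces to a stationary solution of $\Delta U+U^5=0$. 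The radial, finite-energy classification of such solutions --- the rigidity ingredient already used in \cite{DuKeMe11a} --- identifies each profile as $\pm\mu^{-1/2}W(\cdot/\mu)$. Orthogonality of scales in the decomposition produces the ordering $\lambda_1(t_n)\ll\cdots\ll\lambda_J(t_n)$; finite speed of propagation applied to the radiation gives $\lambda_J(t_n)\ll T_+-t_n$ (resp.\ $\ll t_n$); and the fact that each bubble carries energy $E(W,0)$ forces $J<\infty$.

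The principal obstacle is upgrading this subsequential statement to convergence as $t\to T_+$ (or $+\infty$). I expect to argue by contradiction: were there two subsequences producing incompatible parameters $(J,\iota_j,\lambda_j(\cdot))$, an interpolation in time would produce intermediate instants at which the multi-bubble ansatz fails --- a collision of scales, a sign flip, or an ejection of a bubble across the light cone. Each such event should be ruled out by a modulation analysis of the bubble parameters, driven by the variational structure near $W$ and by an exterior-energy budget argument, in the spirit of (and substantially extending) the mono-bubble analyses of \cite{DuKeMe11a,DuKeMe10P,DuKeMe11P}. Continuity of the modulation parameters would then yield a single well-defined decomposition for all $t$ close to $T_+$ (resp.\ $+\infty$), closing the proof.
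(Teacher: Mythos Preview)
Your proposal has two substantive gaps.

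\EMPH{No a priori bound in the global case} You frame the task as proving the decomposition under the hypothesis $\sup_{t}\|(u,\partial_t u)(t)\|_{\hdot\times L^2}<\infty$. In the global case this bound is \emph{not} given: it is one of the main consequences of the theorem (see Remark following the statement). The paper begins instead with Proposition~\ref{P:boundedness}, a Levine-type convexity argument, which yields only $\liminf_{t\to\infty}\|(u,\partial_tu)(t)\|^2\le 3E(u_0,u_1)$, i.e.\ boundedness along \emph{some} sequence $t_n\to\infty$. All subsequent analysis (construction of $v_{\lin}$, profile decomposition) must be carried out along such sequences. Your plan as written does not explain how to get started without the global bound.

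\EMPH{The channel of energy for nonlinear solutions} You invoke ``channels of energy lower bounds for the free wave equation'' to force precompactness of the residue and to rule out non-stationary profiles. The linear exterior-energy estimate (Lemma~\ref{L:lin}) alone is far from enough: the heart of the paper is Section~\ref{S:channels}, which shows that every radial \emph{nonlinear} solution other than $0$ and $\pm\lambda^{-1/2}W(\cdot/\lambda)$ satisfies an exterior lower bound of the type $\int_{|x|>R+|t|}|\nabla u|^2+(\partial_tu)^2\ge\eta$ (Propositions~\ref{P:channel1} and~\ref{P:W+compct}). This is what eliminates non-$W$ profiles in the decomposition (Lemma~\ref{L:channel}), and its proof is delicate --- in particular it does \emph{not} presuppose the elliptic classification you cite from \cite{DuKeMe11a}; rather, that classification is a corollary. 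Your ``precompactness of $a$'' step is neither proved in the paper nor obviously reachable from linear exterior estimates.

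\EMPH{The upgrade to all times} Your proposed modulation/interpolation argument is unnecessary. The paper's mechanism is cleaner: Proposition~\ref{P:sequence} shows the soliton decomposition holds (after extraction) along \emph{any} sequence $t_n\to\infty$ with $\vec u(t_n)$ bounded, not just one favoured sequence. From this, the norm $\|\nabla(u-v_{\lin})(t)\|_{L^2}^2$ is forced to converge to $J\|\nabla W\|_{L^2}^2$ for a fixed $J$ (intermediate value theorem), and a short continuity argument (Claim~\ref{C:distance}) pins down the signs $\iota_j$. No modulation analysis near $W$ is used.
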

\begin{remark}
\label{R:stability}
 It is known that the set $S_1$ of initial data $(u_0,u_1)\in \hdot\times L^2$ such that the corresponding solution of \eqref{CP} scatters forward in time is an open subset of $\hdot \times L^2$. It is widely believed that the set $S_2$ of initial data leading to a type I blow-up in positive time is also open  (see \cite{MatanoMerle09} for a similar result for the supercritical heat equation). Theorem \ref{T:main} says that any radial, finite energy solution of \eqref{CP} whose initial data is in the  complementary set $S_3$ of $S_1\cup S_2$ decouples in a finite sum of rescaled solitons and a radiation term. We believe that one could deduce from Theorem \ref{T:main}, using arguments similar to the ones in \cite{MatanoMerle11} for the radial heat equation or in \cite{MeRaSz10P} for the $L^2$-critical nonlinear Schr\"odinger equation, that $S_3$ is the boundary of $S_1\cup S_2$. We conjecture in particular that a nontrivial consequence of Theorem \ref{T:main} is that the asymptotic behaviour of solutions with initial data in $S_3$ is unstable.

For critical problems, understanding the boundary of the set of initial data  leading to blow-up is relevant. For example, for the $L^2$-critical NLS equation viewed as a limit of the Zakharov system,  the structurally stable blow-up is given by the pseudo-conformal blow-up, which is unstable with respect to the initial data (see \cite{Merle96b}).  See  also \cite{Gundlach99} and \cite{Bizon02} in the hyperbolic context.
\end{remark}

\begin{remark}
 In the finite time blow-up case, Theorem \ref{T:main} implies that 
$$\lim_{t\to T_+} \|(u(t),\partial_tu(t))\|_{\hdot\times L^2}=\ell\in \left(\|\nabla W\|_{L^2},+\infty\right)$$
exists. In particular, there is no oscillation of the norm, or mixed asymptotics, where the limit is infinite for one sequence $\{t_n\}\to T_+$ and finite for another sequence $\{t_n\}\to T_+$.
\end{remark}
\begin{remark}
 Another consequence of Theorem \ref{T:main} in the case $T_+<\infty$ is that solutions split into type I and type II blow-up. It is surprising that this can be established  in a critical problem outside the parabolic setting (see \cite{MatanoMerle09} for example).
\end{remark}
\begin{remark}
\label{R:existence}
 Note that in the case when $T_+<\infty$, both type I (see \cite[\S 6.2]{DuKeMe11P}) and type II (see \cite{KrScTa09}, and also \cite{HiRa10P}) exist. We expect that type II solutions with arbitrary $J\geq 1$ exist. For such constructions in the elliptic radial case, see for example \cite{MussoPistoia06}, and in the hyperbolic one-dimensional setting \cite{CoteZaag11P}.
\end{remark}
\begin{remark}
 For the case $T_+=+\infty$, Theorem \ref{T:main} implies that $\|(u(t),\partial_tu(t)\|_{\hdot\times L^2}$ is bounded on $[0,+\infty)$. More precisely,
$$\lim_{t\to+\infty} \|(u(t),\partial_tu(t))\|_{\hdot\times L^2}^2=\ell\quad \text{with }2E(u_0,u_1)\leq \ell \leq 3E(u_0,u_1).$$
(Note that unless $E(u_0,u_1)\geq 0$, $T_+<\infty$, see \cite{Levine74}, \cite{KeMe08}.) Thus there are no solutions such that $T_+=\infty$ and 
$$ \limsup_{t\to +\infty} \|(u(t),\partial_tu(t))\|_{\hdot\times L^2}=+\infty.$$
Such a result has been established before only in the dissipative case \cite{CaLi84} and for subcritical Klein-Gordon equations \cite{Cazenave85a}.

Solutions as in Theorem \ref{T:main} with $T_+=+\infty$ and $J=1$ have been recently constructed in \cite{DoKr12P}. As in Remark \ref{R:existence}, we expect that they exist for any $J\geq 0$ (the existence of wave operators, i.e the case $J=0$ is of course classical). In the case $T_+=+\infty$, Theorem \ref{T:main} implies that $ J\leq \frac{E(u_0,u_1)}{E(W,0)}.$
\end{remark}
The fundamental new ingredient of the proof is the following dispersive property that all radial solutions $u$ to \eqref{CP} (other than $0$ and $\pm W$ up to scaling) verify in their domain of definition, namely that there exist $R>0$, $\eta>0$ such that for all $t\geq 0$ or all $t\leq 0$, we have
\begin{equation}
 \label{*}
\int_{|x|>R+|t|} |\nabla u(t,x)|^2+(\partial_tu(t,x))^2\,dx\geq \eta.
\end{equation} 
This property is established using only the behavior of $u$ outside regions as in \eqref{*}, without using any global integral identities of virial type. (In fact this approach gives a new proof of the fact that $0$ and $\pm W$ are, up to scaling, the only $\hdot$ radial solutions of the elliptic equation $\Delta f+f^5=0$.) Using \eqref{*} with $R>0$, the finite speed of propagation and the profile decomposition of Bahouri and G\'erard \cite{BaGe99}, we are able to decouple the dynamics of different profiles in regions of the type in \eqref{*}. This is a fundamentally different approach to the one we used in \cite{DuKeMe11P}, which ultimately relied on virial identities. This new approach also yields a different proof of the characterization of radial solutions of \eqref{CP} with a compact trajectory up to scaling (see Theorem 2 of \cite{DuKeMe11a}) that does not rely on virial identities.

Let us emphasize that most of the proof of Theorem \ref{T:main} does not use any specific algebraic property of equation \eqref{CP}. In particular, the conservation of energy is only used in \S \ref{SS:boundedness} to show that the $\hdot\times L^2$ norm of a global solution does not go to infinity as $t\to+\infty$.
For this reason, we expect our general method to apply to many nonlinear dispersive equations and in particular to other hyperbolic problems, at least in the radial case. 
However, the deepest part of our paper, the characterization of solutions not satisfying \eqref{*} (carried out in Section \ref{S:channels}) is proved only in the context of equation \eqref{CP}.

In the nonradial case, even the elliptic equation $-\Delta u=u^5$ on $\RR^3$ is not well-understood yet (see \cite{Ding86} for the existence of solutions with infinitely many distinct energies), and we believe that for nonradial solutions of \eqref{CP}, only analogs of Theorem \ref{T:main} with some extra assumptions are within reach. The nonradial case seems very challenging and out of reach in its full generality for now. 

A key ingredient in our proof is the finite speed of propagation. 
However, in the case of infinite speed of propagation, the channel of energy method can still be applied (see e.g. \cite{MeRa08}).

The outline of the article is as follows. In Section \ref{S:channels}, we show the property \eqref{*} for nonstationary solutions of \eqref{CP}. In Section \ref{S:global} we prove Theorem \ref{T:main} in the case of global solutions. In Section \ref{S:blow-up}, we sketch the proofs in the finite-time blow-up case. 

%


\section{Existence of energy channels for nonstationary solutions}
\label{S:channels}
We will denote by $S(t)(v_0,v_1)$ the solution $v$ to the linear wave equation on $\RR\times \RR^3$:
\begin{equation}
 \label{lin_CP}
\left\{ 
\begin{gathered}
\partial_t^2 v -\Delta v=0,\quad (t,x)\in I\times \RR^3\\
v_{\restriction t=0}=v_0\in \hdot,\quad \partial_t v_{\restriction t=0}=v_1\in L^2.
\end{gathered}\right.
\end{equation} 
One can show (see \cite{DuKeMe11P} and Lemma \ref{L:lin} below) that if $v$ is not identically $0$, there exist $R>0$ and $\eta>0$ such that the following holds for all $t\geq 0$ or for all $t\leq 0$:
\begin{equation*}
\int_{|x|\geq R+|t|} |\nabla v(t,x)|^2+(\partial_t v(t,x))^2\,dx\geq \eta.
\end{equation*} 
In this section, we prove that essentially all radial solutions of the nonlinear equation \eqref{CP} satisfy this ``channel of energy" property in some sense, except the stationary solutions $0$ and $\frac{\pm 1}{\lambda^{1/2}}W\left(\frac{x}{\lambda}\right)$, $\lambda>0$.

If $(u_0,u_1)\in \hdot\times L^2$, we will denote by
\begin{equation}
 \label{def_rho}
\rho(u_0,u_1)=\inf \Big\{ r> 0,\text{ s.t. } \Big|\Big\{s>r,\; (u_0(s),u_1(s))\neq 0\Big\}\Big|=0\Big\},
\end{equation} 
where $\big|\cdot\big|$ denotes the Lebesgue measure.
We make the convention that $\rho(u_0,u_1)=+\infty$ if the set over which we take the infimum is empty. The main results of this section are the following:
\begin{prop}
\label{P:channel1}
 Let $u$ be a non-zero, radial solution of \eqref{CP} such that for all $\lambda>0$ and for all signs $+$ or $-$, $\left(u_0\pm \frac{1}{\lambda^{1/2}}W\left(\frac{x}{\lambda}\right),u_1\right)$ is not compactly supported. Then there exist constants $R>0$, $\eta>0$ and a global, radial solution $\tu$ of \eqref{CP}, with initial data $(\tu_0,\tu_1)$, scattering in both time directions such that
\begin{equation}
 \label{tu0=u0}
\left(\tu_0(r),\tu_1(r)\right)=(u_0(r),u_1(r))\text{ for } r>R,
\end{equation} 
and the following holds for all $t\geq 0$ or for all $t\leq 0$:
\begin{equation}
\label{channel1}
\int_{|x|> R+|t|} |\nabla \tu(t,x)|^2+(\partial_t \tu(t,x))^2\,dx \geq \eta.
\end{equation}
\end{prop}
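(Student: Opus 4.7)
The plan is to construct $\tu$ as a small-data scattering ``exterior extension'' of $(u_0,u_1)$ and then to derive \eqref{channel1} from the linear channel-of-energy lemma (Lemma \ref{L:lin}) together with a nonlinear perturbation bound; the hypothesis on $(u_0\pm W_\lambda,u_1)$ will be used to ensure that the resulting linear lower bound is robust enough to survive the nonlinear correction.

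To execute this, I first fix $R>0$ so large that $\|(u_0,u_1)\|_{\hdot\times L^2(r>R)}<\delta_0$, where $\delta_0$ is a small-data scattering threshold for \eqref{CP}; this is possible since $(u_0,u_1)\in\hdot\times L^2$. A standard radial extension (truncating $(u_0,u_1)$ on $(R,\infty)$ and smoothly interpolating to $0$ on $[0,R/2]$) produces $(\tu_0,\tu_1)\in\hdot\times L^2$ with $(\tu_0,\tu_1)=(u_0,u_1)$ for $r>R$ and $\|(\tu_0,\tu_1)\|_{\hdot\times L^2}\leq C\|(u_0,u_1)\|_{\hdot\times L^2(r>R)}\ll 1$. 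Small-data scattering theory for \eqref{CP} then yields a globally defined $\tu$ scattering in both time directions, and by uniqueness and finite speed of propagation $\tu(t,x)=u(t,x)$ for $|x|>R+|t|$ throughout the life-span of $u$, which gives \eqref{tu0=u0} and reduces the problem to a channel bound on $\tu$ alone. Setting $v(t)=S(t)(\tu_0,\tu_1)$, Lemma \ref{L:lin} produces some $R'\geq R$ and $c>0$ with $\int_{|x|>R'+|t|}|\nabla v|^2+(\partial_tv)^2\,dx\geq c$ for all $t\geq 0$ or for all $t\leq 0$, while the small-data Strichartz bound gives $\|(\tu-v,\partial_t\tu-\partial_tv)\|_{L^\infty_t(\hdot\times L^2)}\lesssim \|(\tu_0,\tu_1)\|_{\hdot\times L^2}^5\ll\sqrt{c}$; a reverse triangle inequality then transfers the lower bound from $v$ to $\tu$, giving \eqref{channel1} with $R$ replaced by $R'$ and $\eta=c/4$.

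The main obstacle is producing the constant $c>0$ quantitatively. For radial exterior data of the form $(\pm W_\lambda,0)|_{r>R}$, the natural lower bound in the channel lemma is annihilated by a boundary correction $-R\,v_0(R)^2$ coming from the Pohozaev-type identity obtained by multiplying $-\Delta W=W^5$ by $W$ and integrating on $\{r>R\}$: the leading $\|(v_0,v_1)\|_{\hdot\times L^2(r>R)}^2$ contribution cancels, leaving only a subleading remainder in $R^{-1}$, too small to dominate the $O(\|(\tu_0,\tu_1)\|_{\hdot\times L^2}^5)$ Strichartz correction. Consequently, the linear lower bound is robustly positive only when the exterior data of $u$ stays quantitatively away from the two-parameter soliton family $\{(\pm W_\lambda,0)\}_{\lambda>0}$. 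The hypothesis is what ensures this: since no $(u_0\pm W_\lambda,u_1)$ is compactly supported, and since $W_\lambda|_{r>R}$ vanishes in $\hdot(r>R)$ both as $\lambda\to 0$ and as $\lambda\to+\infty$, a compactness/continuity argument over $\lambda\in(0,+\infty)$ should yield, for suitable $R$, a uniform quantitative gap between $(u_0,u_1)|_{r>R}$ and this family, large enough for the linear channel to absorb the nonlinear perturbation. Making this quantitative non-degeneracy precise is the technical heart of the argument and the principal difficulty.
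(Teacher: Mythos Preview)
Your setup (truncate to small exterior data, get a global scattering $\tu$, compare to the free evolution via Lemma~\ref{L:lin}) is exactly how the paper begins, and you have correctly isolated the obstruction: for data whose tail looks like $\pm W_\lambda$, the linear channel quantity $\tfrac12\int_R^\infty\big[(\partial_r(r\tu_0))^2+(r\tu_1)^2\big]\,dr$ and the nonlinear correction $\|\vec{\tu}-\vec v\|_{\hdot\times L^2}\lesssim\|(\tu_0,\tu_1)\|_{\hdot\times L^2}^5$ are of the \emph{same order} in $R$. Indeed for $u_0=W$ one computes $\partial_r(rW)=(1+r^2/3)^{-3/2}$, so $\sqrt{c}\sim R^{-5/2}$, while $\delta^5\sim R^{-5/2}$ as well; for $u_0=W$ the two must cancel exactly since the nonlinear solution is stationary. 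The same scaling persists for any $u_0$ with $ru_0(r)\to\ell\neq 0$, e.g.\ $u_0=W+\epsilon r^{-3}$ for large $r$, which satisfies your hypothesis. Thus no choice of a single $R$ can make the free-evolution channel robustly dominate the nonlinear error, and a ``compactness/continuity gap from the soliton family'' cannot rescue this: the hypothesis is qualitative (non-compact support), not a quantitative distance, and the relevant seminorm $\int_R^\infty(\partial_r(rv_0))^2\,dr$ is blind to the $c/r$ mode.

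The paper resolves this by a rigidity argument rather than a direct lower bound. One assumes the channel fails for $\tu$ in both time directions; Claim~\ref{C:limsup} upgrades this to vanishing of the exterior energy as $t\to\pm\infty$. Running your perturbation estimate at \emph{every} $r_0\geq R$ (not a fixed one) and sending $t\to\pm\infty$ yields the functional inequality $\int_{r_0}^\infty\big[(\partial_r v_0)^2+v_1^2\big]\,dr\leq C_0\,|v_0(r_0)|^{10}/r_0^5$ (Lemma~\ref{L:funct_eq}), which is iterated dyadically to show $v_0(r)=ru_0(r)\to\ell$ with rate $O(r^{-2})$ (Lemma~\ref{L:limit}). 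If $\ell=0$ a further iteration forces $(u_0,u_1)$ compactly supported. If $\ell\neq 0$ the crucial new idea is to linearize around $W$ rather than around $0$: one sets $h=u-W_\lambda$, replaces the free propagator by the Cauchy theory of Lemma~\ref{L:CPCarlos} with the truncated potential $V$ of Claim~\ref{C:WV_OK}\eqref{I:V_OK}, and repeats the channel argument for $h$. Because $h$ (not $u$) is now small in the exterior, the perturbation is genuinely lower order, giving $\int_{r_0}^\infty\big[(\partial_r H_0)^2+H_1^2\big]\,dr\leq\tfrac{1}{16}H_0(r_0)^2/r_0$, whose iteration forces $(h_0,h_1)$ compactly supported---contradicting the hypothesis. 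The missing ingredient in your proposal is precisely this second linearization around $W$; without it the $W$-tail cannot be peeled off and the argument stalls at the borderline scaling you identified.
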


\begin{prop}
 \label{P:W+compct}
Let $R_0>0$ be a large constant. Then the following properties hold.

Let $u$ be a radial solution of \eqref{CP} such that $(h_0,h_1):= (u_0\pm W,u_1)$ is compactly supported and not identically $0$. Then:
\begin{enumerate}
 \item \label{I:small_supp} There exists a solution $\cu$, defined for $t\in [-R_0,R_0]$, and $R'\in (0, \rho(h_0,h_1))$ such that
\begin{equation}
 \label{cu0=u0}
(\cu_0(r),\cu_1(r))=(u_0(r),u_1(r))\text{ for } r>R',
\end{equation}
and the following holds for all $t\in [0,R_0]$ or for all $t\in [-R_0,0]$:
\begin{equation}
\label{propa_support}
\rho\big(\cu(t)\pm W,\partial_t\cu(t)\big)=\rho(h_0,h_1)+|t| 
\end{equation} 
\item \label{I:big_supp} Assume furthermore that $\rho(h_0,h_1)>R_0$. Let $R<\rho(h_0,h_1)$ be close to $\rho(h_0,h_1)$. Then there exists $\eta>0$ and a global, radial solution $\tu$ of \eqref{CP}, scattering in both time directions, such that \eqref{tu0=u0} holds, and \eqref{channel1} is satisfied for all $t\geq 0$ or for all $t\leq 0$.
\end{enumerate}
\end{prop}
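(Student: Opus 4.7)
Set $\sigma\in\{+1,-1\}$ so that $h_0=u_0+\sigma W$, and write $\rho_0:=\rho(h_0,h_1)$. The ``soliton-subtracted'' function $h:=u+\sigma W$ satisfies, on the domain of $u$,
\[
\partial_t^2 h - \Delta h - 5 W^4 h = N(h),\qquad N(h)=O(h^2),
\]
with compactly supported initial data $(h_0,h_1)$ of essential radial support exactly $\rho_0$. Both parts of the proposition amount to controlling the propagation of $h$.

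For part (\ref{I:small_supp}), I would construct $\cu$ by preserving $(\cu_0,\cu_1)=(u_0,u_1)$ on $\{|x|>R'\}$ for a suitable $R'<\rho_0$ and smoothly modifying the data on $\{|x|\le R'\}$ (for instance, setting $\cu_0=-\sigma W$, $\cu_1=0$ on $\{|x|\le R'/2\}$ with a smooth transition, after a finite-dimensional adjustment to handle the unstable and neutral directions of the linearized operator $L=-\Delta-5W^4$, cf.\ \cite{DuKeMe11a}). This yields $\cu$ defined on $[-R_0,R_0]$ whose ``soliton-subtracted'' $\ch:=\cu+\sigma W$ has compactly supported data, so $\rho(\ch(t),\partial_t\ch(t))\le\rho_0+|t|$ by finite speed of propagation.

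The matching lower bound is the main point. I would compare $\ch$ to the free linear wave $v:=S(t)(\ch_0,\ch_1)$; their difference solves a Duhamel equation with source $5W^4\ch+N(\ch)$, which is smooth and of higher order near the leading edge $|x|\approx\rho_0+|t|$. The leading-edge behavior of $v$ is explicit from the 3D radial representation formula: for $r$ slightly below $\rho_0+t$,
\[
2r\,v(t,r) = (r-t)\,\ch_0(r-t) + \int_{r-t}^{\rho_0} y\,\ch_1(y)\,dy \qquad (t>0),
\]
with an analogous formula whose integral appears with the opposite sign for $t<0$. If both these ``outgoing profiles'' vanished on a neighborhood of $s=\rho_0$, then $\ch_0$ and $\ch_1$ would both vanish near $\rho_0$, contradicting $\rho(\ch_0,\ch_1)=\rho_0$; hence $v$, and therefore $\ch$ by perturbation, has essential radial support reaching $\rho_0+|t|$ for either all $t\in[0,R_0]$ or all $t\in[-R_0,0]$. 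This is the support-level analog of Lemma~\ref{L:lin}.

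For part (\ref{I:big_supp}), with $R<\rho_0$ close to $\rho_0$, I would set $(\tu_0,\tu_1)=(u_0,u_1)$ on $\{|x|>R\}$ and modify inside so that $\tih:=\tu+\sigma W$ has arbitrarily small $\hdot\times L^2$-norm; this is possible because $h_0\in\hdot$ and $h_1\in L^2$ give $\|(h_0,h_1)\|_{\hdot\times L^2(\{R<|x|<\rho_0\})}\to 0$ as $R\to\rho_0^-$. Standard perturbation theory around $-\sigma W$ (again with a finite-dimensional adjustment of the interior data to deal with the unstable and neutral modes of $L$) then yields $\tu$ global and scattering. Finally, the channel of energy \eqref{channel1} follows by applying part (\ref{I:small_supp}) to $\tu$ to get nontrivial outgoing support of $\tih$ at the leading edge, combined with the linear channel of energy (Lemma~\ref{L:lin}) applied to the scattering profile of $\tih$. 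The central technical obstacle is the leading-edge preservation step in part (\ref{I:small_supp}): one must show the nonlinear correction does not cancel the outgoing radiation of the linearized flow, which the explicit representation formula above accomplishes.
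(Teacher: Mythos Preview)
Your proposal diverges from the paper's proof in ways that introduce real difficulties.

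\textbf{Part \eqref{I:big_supp}.} Your plan to obtain a global, scattering $\tu$ by ``perturbation theory around $-\sigma W$ with a finite-dimensional adjustment'' is problematic: solutions close to $W$ do not generically scatter (they either blow up, drift away, or stay near $W$ along the center-stable manifold), so tuning finitely many interior parameters does not, by any standard argument, produce scattering. The paper avoids this entirely and uses the hypothesis $\rho(h_0,h_1)>R_0$ with $R_0$ large in a much simpler way: since $\int_{|x|>R}|\nabla W|^2\,dx$ is small for $R>R_0$, the truncated data $(\tu_0,\tu_1)=\Psi_R(u_0,u_1)$ has small $\hdot\times L^2$ norm, and the small data theory for \eqref{CP} gives global existence and scattering directly. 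No spectral analysis of $-\Delta-5W^4$ is needed anywhere. The channel estimate then comes from a perturbative Cauchy theory (Lemma~\ref{L:CPCarlos}) for the equation satisfied by $\tih=\tu-W$, with a globally-in-time small potential $V$ equal to $W$ outside the cone $\{|x|>R_0+|t|\}$ (Claim~\ref{C:WV_OK}\eqref{I:V_OK}); this keeps $\tih$ within $\tfrac{1}{10}$ of the free evolution in energy, so Lemma~\ref{L:lin} transfers.

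\textbf{Part \eqref{I:small_supp}.} First, no spectral adjustment is needed to get $\cu$ defined on $[-R_0,R_0]$: the paper just takes $\cu$ with data $(W+\Psi_{R'}(h_0,h_1),\Psi_{R'}(h_0,h_1))$, chooses $R'$ so close to $\rho_0$ that this is $\eps$-close to $(W,0)$, and invokes continuous dependence. Second, your leading-edge argument has a gap: the term $5W^4\ch$ in the Duhamel source is \emph{linear} in $\ch$, not ``higher order,'' and for small $\rho_0$ the coefficient $W^4$ is not small near the edge, so you cannot dismiss its contribution to the support by a pointwise perturbation argument. The paper works at the energy level instead: on a short interval $[-t_0,t_0]$ (small enough that $V=W$ satisfies the smallness hypothesis of Lemma~\ref{L:CPCarlos}), the nonlinear $\ch$ is within $\tfrac{1}{10}$ of the free wave in $\hdot\times L^2$, and Lemma~\ref{L:lin} together with the boundary term control \eqref{IPP1},\eqref{IPPg_0} gives a quantitative lower bound on $\int_{|x|\ge\rho_0+|t|}(|\nabla\ch|^2+(\partial_t\ch)^2)$. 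An induction in steps of length $t_0$ then propagates this to $[0,R_0]$ or $[-R_0,0]$.
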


Let us mention that Propositions \ref{P:channel1} and \ref{P:W+compct} generalize Theorem 2 of \cite{DuKeMe11a} (for the case $N=3$), which states  that any radial solution of \eqref{CP} which has a relatively compact trajectory up to scaling in $\hdot\times L^2$ is a stationary solution.

The proofs of Propositions \ref{P:channel1} and \ref{P:W+compct} are based on dispersive properties of radial solutions to the linear wave equation (see Lemma \ref{L:lin} below), the small data theory of \eqref{CP} and related equations, and refined localization arguments based on finite speed of propagation. Note that we never use in the proofs of the propositions any variational characterization of $W$ or any uniqueness result on the elliptic equation $-\Delta u=u^5$: the fact that $0$, $W$ and $-W$ are (up to scaling) the only radial finite-energy solutions of this equation on $\RR^3$ can be seen as consequences of Propositions \ref{P:channel1} and \ref{P:W+compct}.

\subsection{Preliminaries}
\label{SS:preliminaries}
We start with a few notations. We will denote by $\vec{u}=(u,\partial_t u)$. 

Let $(u_0,u_1)\in \hdot\times L^2$ radial, and $R>0$. We define $(\tu_0,\tu_1)=\Psi_R(u_0,u_1)$ by
\begin{align*}
 \tu_0(r)=u_0(r)&,\quad \tu_1(r)= u_1(r) &\text{ for }r\geq R\\
\tu_0(r)=u_0(R)&,\quad \tu_1(r)=0&\text{ for }r<R.
\end{align*}
Note that $(\tu_0,\tu_1)\in \hdot\times L^2$, that $(u_0(r),u_1(r))=(\tu_0(r),\tu_1(r))$ for $r>R$, and 
\begin{equation*}
 \left\|(\tu_0,\tu_1)\right\|_{\hdot\times L^2}^2= \int_{|x|>R} \left(|\nabla u_0|^2+u_1^2\right)\,dx.
\end{equation*} 
We will denote by $D_x^{1/2}$ the Fourier multiplier with symbol $|\xi|^{1/2}$. 

We recall the following Lemma on radial, linear solutions, proved in \cite{DuKeMe11a}:
\begin{lemma}
\label{L:lin}
 Let $R>0$, $(u_0,u_1)\in \hdot \times L^2$ (radial) and $u_{\lin}=S(t)(u_0,u_1)$. Then the following holds for all $t\geq 0$ or for all $t\leq 0$:
$$ \int_{R+|t|}^{+\infty} \big[\partial_r (ru_{\lin}(t,r))\big]^2+\big[\partial_t (ru_{\lin}(t,r))\big]^2\,dr\geq \frac 12 \int_{R}^{+\infty} \big[\partial_r (r u_0(r))\big]^2+\big[ r u_1(r)\big]^2\,dr.$$ 
\end{lemma}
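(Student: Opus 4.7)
The plan is to reduce to a one-dimensional problem via the classical substitution $v(t,r) := r\, u_{\lin}(t,r)$, which converts the free radial wave equation on $\RR^3$ into the one-dimensional wave equation $\partial_t^2 v - \partial_r^2 v = 0$ on the half-line $r>0$ with Dirichlet condition $v(t,0)=0$. On the region $r > |t|$, d'Alembert's formula gives $v(t,r) = F(r+t) + G(r-t)$ where, for $s>0$,
$$F'(s) = \frac{1}{2}\Big[(ru_0)'(s) + su_1(s)\Big],\qquad G'(s) = \frac{1}{2}\Big[(ru_0)'(s) - su_1(s)\Big].$$
For $r \geq R+|t|$, both arguments $r+t$ and $r-t$ lie in $[R,\infty)$, so the expressions $F'(r+t)$ and $G'(r-t)$ depend only on the restriction of $(u_0,u_1)$ to $\{r\geq R\}$; in particular we never need to use the boundary condition at $r=0$ nor extend the data below $R$.

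Next, using the d'Alembert representation and the identity $(a+b)^2+(a-b)^2 = 2a^2+2b^2$, I would compute
$$(\partial_r v)^2 + (\partial_t v)^2 = 2\, F'(r+t)^2 + 2\, G'(r-t)^2.$$
Integrating over $r\in[R+|t|,\infty)$ and performing the change of variables $s=r+t$ (respectively $s=r-t$) in the two pieces, the left-hand side of the lemma becomes, for $t\geq 0$,
$$2\int_{R+2t}^{\infty} F'(s)^2\,ds + 2\int_R^{\infty} G'(s)^2\,ds \;\geq\; 2\int_R^{\infty} G'(s)^2\,ds,$$
while for $t\leq 0$ the symmetric computation yields a lower bound of $2\int_R^{\infty} F'(s)^2\,ds$.

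Finally, I would observe that $F'(s)^2 + G'(s)^2 = \frac{1}{2}\big[(ru_0)'(s)^2 + (su_1(s))^2\big]$, so that the right-hand side of the claimed inequality is exactly $\int_R^{\infty}\big[F'(s)^2 + G'(s)^2\big]\,ds$. The proof concludes by a trivial dichotomy: since $2\max(a,b) \geq a+b$ for nonnegative $a,b$, if $\int_R^\infty G'^2 \geq \int_R^\infty F'^2$ then for every $t\geq 0$ the preceding bound yields $\mathrm{LHS} \geq 2\int_R^\infty G'^2 \geq \int_R^\infty (F'^2+G'^2) = \mathrm{RHS}$; otherwise the same argument produces the inequality for every $t\leq 0$.

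I do not anticipate any genuine obstacle: the content of the lemma is essentially the $L^2$ statement that the d'Alembert decomposition sends the exterior energy into two independent ``outgoing'' and ``incoming'' channels carrying $F'$ and $G'$, and at least one of these two channels must carry half of the total exterior energy. The only point requiring a moment of care is the verification that the region $r\geq R+|t|$ is traced back under the characteristics entirely into $s\geq R$, which is automatic from the sign choice of $t$.
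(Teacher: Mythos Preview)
Your proof is correct and is essentially the standard argument: the paper does not give its own proof of this lemma but cites \cite{DuKeMe11a}, and the proof there proceeds exactly as you outline, via the substitution $v=r u_{\lin}$, d'Alembert's formula, and the observation that one of the two characteristic channels must carry at least half of the total exterior energy. There is nothing to add.
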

The norm in Lemma \ref{L:lin} and the usual $\hdot$ norm are related by the following formula, given by a straightforward integration by parts: for any radial $f\in \hdot$ and $R>0$,
\begin{equation}
 \label{IPP1}
\int_{R}^{+\infty} \left(\partial_r(rf)\right)^2\,dr=\int_{R}^{+\infty} (\partial_r f)^2r^2\,dr-R f^2(R).
\end{equation}

We will also need the following small data Cauchy problem result:
\begin{lemma}
\label{L:CPCarlos}
 There exists a small $\delta_0>0$ with the following property. Let $I$ be an interval with $0\in I$. Let $V=V(t,x)\in L^8(I\times \RR^3)$. Assume
\begin{multline}
\label{V_small}
\left\|V\right\|_{L^8(I\times \RR^3)}+\left\|D_x^{1/2} V\right\|_{L^4(I\times \RR^3)}+\left\|D_x^{1/2} V^2\right\|_{L^{\frac 83}(I\times \RR^3)}\\
+\left\|D_x^{1/2} V^3\right\|_{L^2(I\times \RR^3)}+\left\|D_x^{1/2} V^4\right\|_{L^{\frac 85}(I\times \RR^3)}<\delta_0
\end{multline}
and consider $(h_0,h_1)\in \hdot\times L^2$ such that
\begin{equation}
 \label{h0_small}
\|(h_0,h_1)\|_{\hdot\times L^2}\leq \delta_0.
\end{equation} 
Then there exists a unique solution $h$ of 
\begin{equation}
\label{CP_h}
\left\{ 
\begin{gathered}
\partial_t^2 h -\Delta h=5V^4h+10V^3h^2+10V^2h^3+5Vh^4+h^5,\quad (t,x)\in I\times \RR^3\\
h_{\restriction t=0}=h_0,\quad \partial_t h_{\restriction t=0}=h_1,
\end{gathered}\right.
\end{equation}
with $\vec{h}\in C^0\left(I,\hdot\times L^2\right)$, and $h\in L^8(I\times \RR^3)$. Furthermore, letting $h_{\lin}(t)=S(t)(h_0,h_1)$,
\begin{equation}
 \label{h_almost_lin}
\sup_{t\in I}\left\|\vec{h}(t)-\vec{h}_{\lin}(t)\right\|_{\hdot\times L^2} \leq \frac{1}{10}\left\|(h_0,h_1)\right\|_{\hdot\times L^2}.
\end{equation} 
\end{lemma}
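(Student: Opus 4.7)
The plan is a standard contraction-mapping argument based on Strichartz estimates for the three-dimensional linear wave equation. The half-derivative norms appearing in \eqref{V_small} are tailored so that, after applying the fractional Leibniz (Kato--Ponce) rule to each term of
$$
F(h) := 5V^4h + 10V^3h^2 + 10V^2h^3 + 5Vh^4 + h^5,
$$
one can pair $D_x^{1/2}F(h)$ with the $\dot H^{1/2}$-admissible Strichartz pair $(4,4)$. Concretely, I would work in the norm
$$
\|h\|_Y := \|h\|_{L^8(I\times \RR^3)} + \|D_x^{1/2}h\|_{L^4(I\times \RR^3)} + \sup_{t\in I}\|\vec h(t)\|_{\hdot\times L^2},
$$
and seek a fixed point of the map $\Phi$ sending $h$ to the solution $w$ of $\partial_t^2 w - \Delta w = F(h)$ with data $(h_0,h_1)$, inside the ball $B_\delta := \{\|h\|_Y \leq \delta\}$ for $\delta$ a small multiple of $\delta_0$. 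Combining the standard Strichartz estimate at the $\dot H^{1/2}$ level, the energy estimate, Sobolev embedding ($\dot W^{1/2,4}_x\hookrightarrow L^{12}_x$, $\hdot\hookrightarrow L^6_x$) and interpolation between $L^\infty_t L^6_x$ and $L^4_tL^{12}_x$ to recover the $L^8_{t,x}$ norm, one has
$$
\|\Phi(h)\|_Y \lesssim \|(h_0,h_1)\|_{\hdot\times L^2} + \|D_x^{1/2}F(h)\|_{L^{4/3}(I\times \RR^3)}.
$$

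The crux of the argument is the term-by-term control of $D_x^{1/2}F(h)$. By the fractional Leibniz rule, for each $k\in\{1,2,3,4\}$,
$$
\|D_x^{1/2}(V^k h^{5-k})\|_{L^{4/3}(I\times \RR^3)} \lesssim \|D_x^{1/2}V^k\|_{L^{\frac{8}{k+1}}(I\times \RR^3)}\|h\|_{L^8(I\times \RR^3)}^{5-k} + \|V\|_{L^8(I\times \RR^3)}^k\|h\|_{L^8(I\times \RR^3)}^{4-k}\|D_x^{1/2}h\|_{L^4(I\times \RR^3)},
$$
and $\|D_x^{1/2}h^5\|_{L^{4/3}(I\times \RR^3)} \lesssim \|h\|_{L^8(I\times \RR^3)}^4\|D_x^{1/2}h\|_{L^4(I\times \RR^3)}$. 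The sequence of exponents $(8,4,\tfrac{8}{3},2,\tfrac{8}{5})$ in \eqref{V_small} is exactly the one making these Hölder pairings close in $L^{4/3}$, with each term carrying at least one factor of $h$. Summing,
$$
\|D_x^{1/2}F(h)\|_{L^{4/3}(I\times \RR^3)} \lesssim \delta_0 \|h\|_Y + \|h\|_Y^5,
$$
so if $\delta,\delta_0$ are small enough then $\Phi$ sends $B_\delta$ into itself.

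For contraction, I would write
$$
F(h) - F(\tilde h) = (V+h)^5 - (V+\tilde h)^5 = (h-\tilde h)\, P(V,h,\tilde h),
$$
with $P$ a polynomial of total degree four in its three arguments, and redo the same Leibniz/Hölder bookkeeping to obtain
$$
\|D_x^{1/2}\bigl(F(h)-F(\tilde h)\bigr)\|_{L^{4/3}(I\times \RR^3)} \lesssim \bigl(\delta_0 + \|h\|_Y^4+\|\tilde h\|_Y^4\bigr)\,\|h-\tilde h\|_Y.
$$
This yields a unique fixed point $h$ in $B_\delta$, and the membership $\vec h\in C^0(I,\hdot\times L^2)$ follows from the Strichartz bound applied to the evolution. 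Finally, since $\vec h - \vec h_{\lin}$ solves the wave equation with zero initial data and source $F(h)$, the same Strichartz/energy estimate gives
$$
\sup_{t\in I}\|\vec h(t)-\vec h_{\lin}(t)\|_{\hdot\times L^2} \lesssim \|D_x^{1/2}F(h)\|_{L^{4/3}(I\times \RR^3)} \lesssim \delta_0\|h\|_Y \leq C\delta_0 \|(h_0,h_1)\|_{\hdot\times L^2},
$$
and the prefactor $\tfrac{1}{10}$ in \eqref{h_almost_lin} is achieved by shrinking $\delta_0$. The only real technical point is the bookkeeping of exponents: the specific norms in \eqref{V_small} are precisely those for which the Leibniz expansion of $D_x^{1/2}F(h)$ closes in $L^{4/3}(I\times \RR^3)$ with $\delta_0$ (rather than $\|h\|_Y$) driving the smallness, so no new difficulty arises beyond the standard perturbative small-data theory.
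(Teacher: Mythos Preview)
Your argument is correct and essentially identical to the paper's own proof in Appendix~\ref{A:linearized}: both set up a contraction in the norm $\|h\|_{L^8_{t,x}}+\|D_x^{1/2}h\|_{L^4_{t,x}}$, use the $\dot H^{1/2}$-level Strichartz estimate to reduce to bounding $\|D_x^{1/2}F(h)\|_{L^{4/3}_{t,x}}$, and then apply the fractional Leibniz rule term by term with exactly the H\"older pairings you wrote (the paper records them as estimates (E2)--(E6)). The only cosmetic difference is that the paper runs the fixed point in the $(L^8,L^4)$ norms alone and reads off the energy bound afterwards, whereas you fold $\sup_t\|\vec h(t)\|_{\hdot\times L^2}$ into the iteration norm from the start.
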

We will use Lemma \ref{L:CPCarlos} with two choices of $V$, given by the following claim:
\begin{claim}
 \label{C:WV_OK}
\begin{enumerate}
 \item\label{I:W_OK}
Assume $V(t,x)=W(x)$. Then there exists a small $t_0>0$ such that \eqref{V_small} holds with $I=(-2t_0,2t_0)$.
\item\label{I:V_OK}
Let $R_0>0$ and define $V(t,x)$ as
\begin{equation}
 \label{def_V}
\left\{
\begin{aligned}
V(t,x)&=W(x) \text{ if } |x|\geq R_0+|t|\\
V(t,x)&=W(R_0+|t|)\text{ if }|x|< R_0+|t|.
\end{aligned}
\right.
\end{equation}
Then if $R_0$ is large, \eqref{V_small} holds with $I=\RR$.
\end{enumerate}
\end{claim}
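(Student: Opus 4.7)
\emph{Plan.} The common thread for both parts is the Gagliardo--Nirenberg interpolation
\begin{equation*}
\|D_x^{1/2} f\|_{L^p(\RR^3)} \lesssim \|f\|_{L^{q_0}(\RR^3)}^{1/2}\,\|\nabla f\|_{L^{q_1}(\RR^3)}^{1/2},\qquad \tfrac{1}{p}=\tfrac{1}{2q_0}+\tfrac{1}{2q_1},
\end{equation*}
which I apply with $f=V^k$ and $q_0=q_1=p_k$, where $(p_1,p_2,p_3,p_4)=(4,8/3,2,8/5)$ are the spatial Lebesgue exponents appearing in \eqref{V_small}. For part \eqref{I:W_OK}, since $V(t,x)=W(x)$ is independent of time, each spacetime norm in \eqref{V_small} reduces to $|I|^{1/p}$ times a fixed spatial norm. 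Since $W(x)\sim|x|^{-1}$ at infinity, the inclusions $W^k\in L^{p_k}(\RR^3)$ and $\nabla(W^k)\in L^{p_k}(\RR^3)$ hold as soon as $k p_k>3$ and $(k+1)p_k>3$, which is readily verified in all four cases. The interpolation then bounds the spatial constants, and choosing $t_0$ small makes the sum at most $\delta_0$.

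\emph{Part \eqref{I:V_OK}: spatial scaling.} For the modified cutoff $V$, which equals $W(x)$ on $\{|x|\geq R_0+|t|\}$ and the constant $W(R_0+|t|)$ on its complement, I first record the pointwise bounds $V(t,x)\lesssim (R_0+|t|+|x|)^{-1}$ and $|\nabla_x V(t,x)|\lesssim |x|^{-2}\,\indic_{\{|x|\geq R_0+|t|\}}$ (the gradient vanishes on the inner ball, where $V$ is constant). A polar-coordinate layer-cake computation, splitting the inner ball from the outer region, yields the sharp scaling
\begin{equation*}
\|V^k(t,\cdot)\|_{L^{p_k}_x}\lesssim (R_0+|t|)^{3/p_k-k},\qquad \|\nabla_x V^k(t,\cdot)\|_{L^{p_k}_x}\lesssim (R_0+|t|)^{3/p_k-(k+1)},
\end{equation*}
the integrability being guaranteed precisely by the conditions used in part \eqref{I:W_OK}. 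Plugging these into Gagliardo--Nirenberg produces the key bound
\begin{equation*}
\|D_x^{1/2}V^k(t,\cdot)\|_{L^{p_k}_x}\lesssim (R_0+|t|)^{3/p_k-k-1/2}.
\end{equation*}

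\emph{Time integration and main obstacle.} Raising to the $p_k$-th power and integrating over $t\in\RR$ gives a convergent integral provided $p_k(k+1/2)>4$. The numerical values $p_k(k+1/2)\in\{6,\ 20/3,\ 7,\ 36/5\}$ for $k=1,2,3,4$ all satisfy this, so each of the four derivative terms contributes $O\bigl(R_0^{(4-p_k(k+1/2))/p_k}\bigr)\to 0$ as $R_0\to+\infty$. The $\|V\|_{L^8(\RR\times\RR^3)}$ term is handled by the same splitting and is $O(R_0^{-1/2})$. Summing, the left-hand side of \eqref{V_small} can be made less than $\delta_0$ by choosing $R_0$ large. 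The one point requiring care is the Lipschitz singularity of $\nabla_x V$ across the moving sphere $\{|x|=R_0+|t|\}$; this causes no harm since the interpolation inequality demands only $\nabla_x V^k\in L^{p_k}_x$, which the pointwise bounds supply. The main technical fact on which everything hinges is the simultaneous validity of the five exponent inequalities $p_k k>3$, $p_k(k+1)>3$, $p_k(k+1/2)>4$ at the specific values $(p_k,k)$ dictated by the small-data theory.
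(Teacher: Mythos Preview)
Your proof is correct and follows essentially the same approach as the paper: bound $D_x^{1/2}(V^k)$ by interpolating between $\|V^k\|_{L^{p}}$ and $\|\nabla V^k\|_{L^{q}}$, then for part \eqref{I:V_OK} compute the scaling of each spatial norm as a power of $(R_0+|t|)$ and check that the resulting time integrals converge. The only cosmetic difference is that you take the diagonal exponents $q_0=q_1=p_k$ in the interpolation, whereas the paper uses off-diagonal pairs $(p,q)$ for each $k$; both choices give the same decay exponent $3/p_k-k-1/2$ and the same integrability condition $p_k(k+\tfrac12)>4$, so this is a harmless simplification.
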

We will prove Lemma \ref{L:CPCarlos} and Claim \ref{C:WV_OK} in Appendix \ref{A:linearized}.

We conclude this preliminary subsection with the following elementary claim that will be needed throughout the proofs:

\begin{claim}
 \label{C:limsup}
Let $\tu$ be a global solution of \eqref{CP} such that for some $R>0$,
\begin{equation}
\label{limsup}
 \limsup_{t\to +\infty} \int_{|x|>R+|t|} |\nabla \tu(t,x)|^2+(\partial_t \tu(t,x))^2\,dx>0,
\end{equation} 
then \eqref{channel1} holds for some $\eta>0$ and all $t\geq 0$. An analoguous statement holds for negative times. 
\end{claim}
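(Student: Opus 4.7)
The plan is to argue by contradiction via the following quantitative statement: if the exterior quantity
$$f(t_0):=\int_{|x|>R+t_0}|\nabla\tu(t_0,x)|^2+(\partial_t\tu(t_0,x))^2\,dx$$
is sufficiently small at some $t_0\geq 0$, then $f(t)\leq C\,f(t_0)$ for every $t\geq t_0$, with an absolute constant $C$. Once this is established, the conclusion follows at once: if \eqref{channel1} failed for every $\eta>0$, one could choose $t_0\geq 0$ with $f(t_0)$ arbitrarily small, forcing $\limsup_{t\to+\infty}f(t)=0$ and contradicting \eqref{limsup}.

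To prove the quantitative statement, I would freeze the data at time $t_0$ and truncate by setting
$(\ovh_0,\ovh_1):=\Psi_{R+t_0}\bigl(\tu(t_0),\partial_t\tu(t_0)\bigr)$. By the definition of $\Psi_{R+t_0}$ recalled in \S\ref{SS:preliminaries}, one has $\|(\ovh_0,\ovh_1)\|_{\hdot\times L^2}^2=f(t_0)$, and these data agree with $(\tu(t_0),\partial_t\tu(t_0))$ on $\{r\geq R+t_0\}$. Assuming $f(t_0)\leq \delta_0^2$ with $\delta_0$ as in Lemma \ref{L:CPCarlos}, the lemma applied with $V\equiv 0$ (so \eqref{V_small} is trivially satisfied on $I=\RR$) produces a global solution $h$ of \eqref{CP} with these data at $t=t_0$, satisfying
\begin{equation*}
\sup_{t\in\RR}\|\vec{h}(t)-\vec{h}_{\lin}(t)\|_{\hdot\times L^2}\leq \tfrac{1}{10}\|(\ovh_0,\ovh_1)\|_{\hdot\times L^2},
\end{equation*}
where $\vec h_{\lin}(t)=S(t-t_0)(\ovh_0,\ovh_1)$. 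Since the linear flow preserves the $\hdot\times L^2$ norm, this yields $\|\vec h(t)\|_{\hdot\times L^2}^2\leq \frac{121}{100}\,f(t_0)$ for all $t\in\RR$.

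Finite speed of propagation then identifies $h$ with $\tu$ on the forward exterior cone: since the initial data of $h$ and $\tu$ at time $t_0$ coincide on $\{r\geq R+t_0\}$, the two solutions agree on $\{(t,x):|x|\geq R+t_0+|t-t_0|\}$, which for $t\geq t_0$ is precisely $\{|x|\geq R+t\}$. Hence
\begin{equation*}
f(t)=\int_{|x|>R+t}|\nabla h(t,x)|^2+(\partial_t h(t,x))^2\,dx\leq \|\vec h(t)\|_{\hdot\times L^2}^2\leq \tfrac{121}{100}\,f(t_0),\qquad t\geq t_0,
\end{equation*}
which is the desired quantitative statement. The analogous negative-time assertion is handled identically by time reversal. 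No step is particularly delicate; the only point that requires care is the bookkeeping of the domain of dependence, so that the truncated solution $h$ really coincides with $\tu$ on the whole forward exterior cone $\{|x|>R+t\}$, $t\geq t_0$. After that, the conclusion reduces to the small-data Cauchy theory already packaged in Lemma \ref{L:CPCarlos}.
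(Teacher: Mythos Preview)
Your proof is correct and follows essentially the same approach as the paper: truncate the data outside the cone at a time where the exterior energy is small, invoke the small-data theory to obtain a globally bounded auxiliary solution, and use finite speed of propagation to identify it with $\tu$ on the forward exterior cone. The paper phrases this as a direct contradiction along a sequence $t_n\to\infty$ with vanishing exterior energy, while you package the same mechanism as a quantitative monotonicity statement $f(t)\leq Cf(t_0)$; the content is the same.
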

\begin{proof}
Indeed, assume \eqref{limsup}, and (by contradiction) that there exists a sequence $t_n\to \infty$ such that
\begin{equation*}
 \lim_{n\to +\infty} \int_{|x|>R+|t_n|} |\nabla \tu(t_n,x)|^2+(\partial_t \tu(t_n,x))^2\,dx=0.
\end{equation*} 
Let $u_n$ be the solution of \eqref{CP} such that 
$$(u_n(t_n),\partial_tu_n(t_n))=\Psi_{R+|t_n|}(\tu(t_n),\partial_t\tu(t_n)).$$
Then
$$\lim_{n\to\infty} \|(u_n(t_n),\partial_tu_n(t_n))\|_{\hdot\times L^2}=0.$$
Consider a small $\eps>0$ and let $n$ such that $\|(u_n(t_n),\partial_t u_n(t_n))\|_{\hdot\times L^2}<\eps$. By the small data theory, $u_n$ is globally defined and for all $t$,
$$\|(u_n(t),\partial_tu_n(t))\|_{\hdot\times L^2}<2\eps.$$
By finite speed of propagation, for all $t$,
$$ (\tu(t_n+t,r),\partial_t\tu(t_n+t,r))=(u_n(t_n+t,r),\partial_tu_n(t_n+t,r))\text{ if } r> R+t_n+|t|,$$
and hence
$$ \limsup_{t\to+\infty} \int_{|x|\geq R+t} |\nabla \tu(t,x)|^2+(\partial_t\tu(t,x))^2\,dx<2\eps.$$
As $\eps>0$ is arbitrarily small, this contradicts \eqref{limsup}, concluding the proof.
\end{proof}

 \subsection{Proof of the channel energy property}
This subsection is dedicated to the proofs. We start by showing Proposition \ref{P:W+compct} (\S \ref{SS:small_supp} and \ref{SS:cpct_supp}), then prove Proposition \ref{P:channel1} (see \S \ref{SS:other_cases}).

\subsubsection{Propagation of the support for a compactly supported perturbation of $W$}
\label{SS:small_supp}
In this subsection, we prove point \eqref{I:small_supp} of Proposition \ref{P:W+compct}. We divide the proof into three steps.

\EMPH{Step 1: linearization around $W$} 
By our assumptions (up to a sign change), $(u_0,u_1)=(W,0)+(h_0,h_1)$ where $(h_0,h_1)$ is compactly supported. Using that $W$ is globally defined, we get that there exists $\eps>0$ such that for any solution $U$ of \eqref{CP} with $\left\|(W,0)-(U,\partial_tU)_{\restriction t=0}\right\|_{\hdot\times L^2}<\eps$, we have $[-R_0,+R_0]\subset I_{\max}(U)$. 

We let $(\ch_0,\ch_1)=\Psi_{R'}(h_0,h_1)$, where $R'<\rho(h_0,h_1)$ is chosen close to $\rho(h_0,h_1)$, so that
$$0<\|(\ch_0,\ch_1)\|_{\hdot\times L^2}<\eps.$$
Let $\cu$ be the solution of \eqref{CP} with initial data $(W+\ch_0,\ch_1)$. Equivalently, $\ch=\cu-W$ is the solution of
\begin{equation}
\label{CP_ch}
\left\{ 
\begin{gathered}
\partial_t^2 \ch -\Delta \ch=5W^4\ch+10W^3\ch^2+10W^2\ch^3+5W\ch^4+\ch^5,\quad (t,x)\in \RR\times \RR^3\\
(\ch,\partial_t \ch)_{\restriction t=0}=(\ch_0,\ch_1).
\end{gathered}\right.
\end{equation}
By the definition of $\eps$, $\cu$ and $\ch$ are defined on $[-R_0,R_0]$. By finite speed of propagation, $(\cu,\partial_t\cu)=(W,0)$ for $r\geq \rho(h_0,h_1)+|t|$, and thus
\begin{equation}
\label{bound_rho_above}
 \rho(\ch(t),\partial_t\ch(t))\leq \rho(h_0,h_1)+|t|,\quad t\in [-R_0,R_0].
\end{equation} 
We must show that for all $t\in [-R_0,0]$ or for all $t\in [0,R_0]$,
\begin{equation}
\label{bound_rho_below}
 \rho(\ch(t),\partial_t\ch(t))= \rho(h_0,h_1)+|t|,\quad t\in [-R_0,R_0].
\end{equation} 
\EMPH{Step 2: small time interval}

By Claim \ref{C:WV_OK}, there exists a small $t_0>0$ such that $W$ satisfies the assumption \eqref{V_small} of Lemma \ref{L:CPCarlos} with $I=[-t_0,t_0]$. We show in this step that \eqref{bound_rho_below} holds for all $t\in [-t_0,0]$ or all $t\in [0,t_0]$. 

Let $\rho_0$ close to $\rho(h_0,h_1)$ such that $R'<\rho_0<\rho(h_0,h_1)$, and define
$$(g_0,g_1)=\Psi_{\rho_0}(\ch_0,\ch_1).$$
If $\rho(h_0,h_1)-\rho_0$ is small enough, we can assume
$$ \|(g_0,g_1)\|_{\hdot\times L^2}\leq \delta_0,$$
where $\delta_0$ is given by Lemma \ref{L:CPCarlos}. By Lemma \ref{L:CPCarlos}, there exists a unique solution $g$ of
\begin{equation}
\label{CP_g2}
\left\{ 
\begin{gathered}
\partial_t^2 g -\Delta g=5W^4g+10W^3g^2+10W^2g^3+5Wg^4+g^5,\quad (t,x)\in [-t_0,t_0]\times \RR^3\\
(g,\partial_t g)_{\restriction t=0}=(g_0,g_1),
\end{gathered}\right.
\end{equation}
and denoting by $g_{\lin}(t)=S(t)(g_0,g_1)$,
\begin{equation}
\label{vecg}
\sup_{-t_0<t<t_0} \|\vec{g}(t)-\vec{g}_{\lin}(t)\|_{\hdot\times L^2}\leq \frac{1}{10}\|(g_0,g_1)\|_{\hdot\times L^2}. 
\end{equation} 
By Lemma \ref{L:lin} and formula \eqref{IPP1}, the following holds for all $t\in [0,t_0]$ or for all $t\in [-t_0,0]$:
\begin{multline}
\label{interm_g0}
\int_{|x|\geq \rho_0+|t|} \left(|\nabla g_{\lin}(t,x)|^2+(\partial_t g_{\lin}(t,x))^2\right)\,dx\geq \int_{\rho_0+|t|}^{+\infty} \left(\partial_r (rg_{\lin}(t,r))\right)^2+\left(\partial_t (r g_{\lin}(t,r))\right)^2\,dr\\
 \geq \frac 12\int_{\rho_0}^{+\infty} \left(\partial_r (rg_0)\right)^2+(r g_{1})^2\,dr=
\frac 12\int_{|x|\geq \rho_0}\left(|\nabla g_0|^2+g_{1}^2\right)\,dx-\frac 12\rho_0(g_0(\rho_0))^2.
\end{multline}
We have
\begin{multline}
\label{IPPg_0}
 |g_0(\rho_0)|= \left|\int_{\rho_0} ^{\rho(h_0,h_1)} \partial_r g_0(r)\,dr\right|\leq \sqrt{\Big(\rho(h_0,h_1)-\rho_0\Big)\int_{\rho_0}^{\rho(h_0,h_1)}(\partial_r g_0(r))^2\,dr}\\
\leq \frac{\sqrt{\rho(h_0,h_1)-\rho_0}}{\rho_0}\sqrt{\int_{\rho_0}^{\rho(h_0,h_1)}(\partial_r g_0(r))^2r^2\,dr},
\end{multline}
and thus if $\rho_0$ is close enough to $\rho(h_0,h_1)$, $\rho_0|g_0(\rho_0)|^2\leq \frac{1}{4}\|\nabla g_0\|_{L^2}^2$. Combining with \eqref{vecg} \eqref{interm_g0} we get that the following holds for all $t\geq 0$ or for all $t\leq 0$:
\begin{equation}
 \label{channel_g}
\int_{|x|\geq \rho_0+|t|} \left(|\nabla g(t,x)|^2+(\partial_t g(t,x))^2\right)\,dx\geq \frac{1}{40}\int \left(|\nabla g_0|^2+g_1^2\right)\,dx>0.
\end{equation}
By finite speed of propagation (see the argument after \eqref{tih=g} below), one can replace $g$ by $\ch$ in the left-hand side of \eqref{channel_g}. Hence
$$ \rho(\ch(t),\partial_t\ch(t))\geq \rho_0+|t|$$ 
for all $t\in [-t_0,0]$ or for all $t\in [0,t_0]$. Letting $\rho_0\to \rho(h_0,h_1)$, we get (in view of \eqref{bound_rho_above}) that \eqref{bound_rho_below} holds on $[-t_0,0]$ or on $[0,t_0]$, concluding this step.

\EMPH{Step 3: end of the proof} 

It is now easy to conclude by an induction argument. Assume to fix ideas that \eqref{bound_rho_below} holds for all $t\in [0,t_0]$. Applying Step 2, to $t\to \ch(t+t_0)$, we get that the following holds for all $t\in [0,t_0]$ or for all $t\in [t_0,\min(2t_0,R_0)]$: 
\begin{equation}
\label{supp_induc}
 \rho\left(\ch(t_0+t),\partial_t\ch(t_0+t)\right)=\rho(h_0,h_1)+|t_0|+|t|.
\end{equation} 
If \eqref{supp_induc} holds on $[0,t_0]$, we get a contradiction with the fact that \eqref{bound_rho_below} holds at $t=0$. Thus \eqref{supp_induc} holds on $[0,\min(2t_0,R_0)]$. Arguing inductively, we get that \eqref{bound_rho_below} holds on $[0,R_0]$.

\subsubsection{Compactly supported perturbation of $W$ with large support}
\label{SS:cpct_supp}
In this part, we prove case \eqref{I:big_supp} of Proposition \ref{P:W+compct}. Let $u$ be a radial solution of \eqref{CP} such that 
$$(u_0,u_1)=(W,0)+(h_0,h_1)\text{ and }R_0<\rho(h_0,h_1)<\infty,$$
where the large parameter $R_0>0$ is given by Claim \ref{C:WV_OK}.

Define $V(t,x)$ by \eqref{def_V}. Let $(g_0,g_1)=\Psi_R(h_0,h_1)$ where $R\in \big(R_0,\rho(h_0,h_1)\big)$. We chose $R$ close to $\rho(h_0,h_1)$, so that
$$ \|(\nabla g_0,g_1)\|_{\hdot\times L^2}\leq \delta_0,$$
where $\delta_0$ is given by Lemma \ref{L:CPCarlos}. By Lemma \ref{L:CPCarlos}, there exists a unique solution $g$ of
\begin{equation}
\label{CP_g}
\left\{ 
\begin{gathered}
\partial_t^2 g -\Delta g=5V^4g+10V^3g^2+10V^2g^3+5Vg^4+g^5,\quad (t,x)\in \RR\times \RR^3\\
g_{\restriction t=0}=g_0,\quad \partial_t g_{\restriction t=0}=g_1.
\end{gathered}\right.
\end{equation}
Furthermore, letting $g_{\lin}=S(t)(g_0,g_1)$, we have
\begin{equation}
 \label{g_gL}
\sup_{t\in \RR} \left\|\vec{g}_{\lin}(t)-\vec{g}(t)\right\|_{\hdot\times L^2} \leq \frac{1}{10}\left\|(g_0,g_1)\right\|_{\hdot\times L^2}.
\end{equation}
We divide the proof into two steps.
 
\EMPH{Step 1} In this step, we show that the following holds for all $t\geq 0$ or for all $t\leq 0$:
\begin{equation}
 \label{bound_g}
\int_{|x|\geq R+|t|} \left(|\nabla g(t,x)|^2+(\partial_t g(t,x))^2\right)\,dx\geq \frac{1}{40} \|(g_0,g_1)\|^2_{\hdot\times L^2}>0.
\end{equation} 
Indeed by Lemma \ref{L:lin} and the integration by parts formula \eqref{IPP1}, the following holds for all $t\geq 0$ or for all $t\leq 0$:
\begin{multline*}
 \int_{|x|\geq R+|t|} \left(|\nabla g_{\lin}(t,x)|^2+(\partial_t g_{\lin}(t,x))^2\right)\,dx\geq \int_{R+|t|}^{+\infty} \big((\partial_r(r g_{\lin}))^2+(\partial_t(r g_{\lin}))^2\big)\,dr\\
\geq \frac{1}{2}\int_R^{+\infty} \left((\partial_r(rg_0))^2+(rg_1)^2\right)\,dr=\frac{1}{2}\left(\|\nabla g_0\|^2_{L^2}+\|g_1\|_{L^2}^2-R g_0^2(R)\right).
\end{multline*}
By \eqref{IPPg_0} (with $R$ instead of $\rho_0$), and using that $\rho(h_0,h_1)=\rho(g_0,g_1)$, we get that if $R$ is close enough to $\rho(h_0,h_1)$, $R|g_0(R)|^2\leq \frac{1}{4}\|\nabla g_0\|_{L^2}^2$, which shows \eqref{bound_g} in view of \eqref{g_gL}.

\EMPH{Step 2: conclusion of the proof}

Let $\tu$ be the solution of \eqref{CP} with initial data $(\tu_0,\tu_1)=\Psi_R(u_0,u_1)=\Psi_R(W+h_0,h_1)$. Let $\tih=\tu-W$. If $R_0$ is chosen large and $R>R_0$ close enough to $\rho(h_0,h_1)$, it is easy to see that  $\|(\tu_0,\tu_1)\|_{\hdot\times L^2}$ is small, and thus that $\tu$ is globally defined and scatters in both time directions. Moreover, $\tih$ satisfies
\begin{equation}
\label{CP_th}
\left\{ 
\begin{gathered}
\partial_t^2 \tih -\Delta \tih=5W^4\tih+10W^3\tih^2+10W^2\tih^3+5W\tih^4+\tih^5,\quad (t,x)\in \RR\times \RR^3\\
(\tih(t,r),\partial \tih(t,r))_{\restriction t=0}=(h_0(r),h_1(r))=(g_0(r),g_1(r))\text{ if }r>R.
\end{gathered}\right.
\end{equation}
Using that $W=V$ if $|x|>R_0+|t|$ we get by finite speed of propagation and the equations \eqref{CP_g} and \eqref{CP_th},
\begin{equation}
\label{tih=g}
(\tih,\partial_t\tih)(t,r)=(g,\partial_tg)(t,r)\text{ for } r>R+|t|.
\end{equation} 
Indeed, $w= \tih-g$ satisfies the equation:
$$\partial_t^2 w-\Delta w - Mw =F$$
where 
$$M(t,x)=5W^4+10W^3(\tih+g)+10W^2(\tih^2+g\tih+g^2)+5W\sum_{k=0}^3 \tih^{3-k}g^k+\sum_{k=0}^4\tih^{4-k}g^k$$ 
and 
$$F(t,x)=5(V^4-W^4)g+10(V^3-W^3)g^2+10(V^2-W^2)g^3+4(V-W)g^4.$$
One can check that for any compact interval $I\subset \RR$, $M\in L^8(I\times \RR^3)$, $D_x^{1/2}M\in L^4(I\times \RR^3)$ and $D_x^{1/2}F\in L^{4/3}(I\times \RR^3)$.
Moreover, $F(t,x)=0$ for $|x| > R_0 +|t|$, and
$(w,\partial_tw)_{\restriction t=0}= 0$ for $|x|> R_0$.

The solution $w$ can be constructed by a fixed point on small time intervals as in Appendix \ref{A:linearized}. Writing  the solution $w$ iteratively via Duhamel formula, one shows using the finite speed of propagation for the free wave propagator that $w=0$ for $|x|>R_0+|t|$, which gives \eqref{tih=g}. We omit the details.

By Step 1, we deduce that the following holds for all $t\geq 0$ or for all $t\leq 0$:
\begin{equation}
\label{channel_th}
 \int_{|x|\geq R+|t|} \left(|\nabla \tih(t)|^2+(\partial_t \tih(t))^2\right)\,dx\geq \eta,
\end{equation} 
where $\eta=\frac{1}{40}\|(g_0,g_1)\|^2_{\hdot\times L^2}>0$. Using that 
$$\lim_{t\to \pm \infty} \int_{|x|> R+|t|} |\nabla W|^2\,dx =0,$$
we get that one of the following holds at least for one sign $+$ or $-$:
$$\limsup_{t\to \pm \infty} \int_{|x|>R+|t|} \left(|\nabla \tu(t)|^2+(\partial_t \tu(t))^2\right)\,dx\geq \eta,$$
which concludes the proof of case \eqref{I:big_supp} of Proposition \ref{P:W+compct}, in view of Claim \ref{C:limsup}.
\subsubsection{Other solutions}
\label{SS:other_cases}
In this part we prove Proposition \ref{P:channel1} as a consequence of the following lemma:
\begin{lemma}
 \label{L:channel1}
Let $u$ be a global, radial solution of \eqref{CP} such that, for some $R>0$,
\begin{multline}
 \label{no_channel}
\lim_{t\to +\infty}\int_{|x|>R+|t|} |\nabla u(t,x)|^2+(\partial_tu(t,x))^2\,dx\\
=\lim_{t\to -\infty} \int_{|x|>R+|t|} |\nabla u(t,x)|^2+(\partial_tu(t,x))^2\,dx=0. 
\end{multline} 
Then $(u_0,u_1)$ is compactly supported, or there exists $\lambda>0$ and $\iota\in \{\pm 1\}$ such that
$$ (u_0,u_1)-\left(\frac{\iota}{\lambda^{1/2}}W\left(\frac{x}{\lambda}\right),0\right)$$
is compactly supported.
\end{lemma}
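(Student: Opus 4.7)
The plan is to exploit the small-data Cauchy theory of \eqref{CP} (Lemma \ref{L:CPCarlos} together with Claim \ref{C:WV_OK}), the linear radial channel estimate (Lemma \ref{L:lin}), and finite speed of propagation, linearizing successively around the two natural static backgrounds: $V \equiv 0$, and, if needed, a rescaled soliton $V = \iota \lambda^{-1/2} W(\cdot/\lambda)$.

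First I would fix $R_1 \geq R$ large enough that $\|\Psi_{R_1}(u_0,u_1)\|_{\hdot \times L^2} < \delta_0$. Let $g$ be the global nonlinear evolution of $(g_0, g_1) := \Psi_{R_1}(u_0,u_1)$ provided by Lemma \ref{L:CPCarlos} with $V=0$, and $g_{\lin} := S(t)(g_0, g_1)$. Finite speed of propagation gives $u \equiv g$ on $\{|x| > R_1 + |t|\}$, so \eqref{no_channel} implies that the channel energy $E_{\mathrm{ch}}(g,t) := \int_{|x|>R_1+|t|}(|\nabla g|^2 + (\partial_t g)^2)\,dx$ tends to $0$ as $t \to \pm\infty$. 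Combining this with the closeness $\|\vec g(t) - \vec g_{\lin}(t)\|_{\hdot \times L^2} \leq (1/10)\|(g_0,g_1)\|$ and with Lemma \ref{L:lin} applied to $g_{\lin}$ (in whichever time direction it supplies the lower bound) yields a quantitative estimate
\[
\int_{R_1}^{\infty}\bigl[(\partial_r(rg_0))^2 + (rg_1)^2\bigr]\,dr \leq \kappa\,\|(g_0, g_1)\|_{\hdot \times L^2}^2
\]
for an explicit small constant $\kappa > 0$. Via the integration-by-parts identity \eqref{IPP1}, this inequality says that $(u_0, u_1)$ restricted to $\{r > R_1\}$ is quantitatively close to a radial profile of the form $(c/r, 0)$ for some $c = c(R_1)$. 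Letting $R_1 \to \infty$ and using that the approximating constants on overlapping annuli must coincide, I deduce the existence of $c := \lim_{r \to \infty} r u_0(r)$, with $u_1(r) \to 0$ on the exterior in the corresponding integrated sense.

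If $c = 0$, a refinement of the argument (using a sharper nonlinear small-data estimate and the improved decay $ru_0 \to 0$) forces $(u_0, u_1) \equiv 0$ outside a large ball, giving the first alternative. If $c \neq 0$, I set $\lambda := c^2/3$ and $\iota := \operatorname{sgn}(c)$, so that $V := \iota \lambda^{-1/2} W(\cdot/\lambda)$ matches the leading tail: $rV(r) \to c$ as $r \to \infty$. The perturbation $h := u - V$ then satisfies $rh_0(r) \to 0$ at infinity. Running the analysis of the previous paragraph with $h$ in place of $g$ and with background $V$ — justified by Claim \ref{C:WV_OK}(\ref{I:V_OK}), possibly with the truncated profile \eqref{def_V} plus finite speed of propagation to apply Lemma \ref{L:CPCarlos} uniformly in time, and noting that the static $V$ has vanishing contribution to the exterior energy in the limit $|t| \to \infty$ — yields the analogous integral inequality for $(h_0, h_1)$. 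Combined with $rh_0 \to 0$, this forces $(h_0, h_1) \equiv 0$ on $\{r > R_1^\ast\}$ for some $R_1^\ast$, which is the second alternative.

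The main obstacle is the final rigidity step in both cases: upgrading the quantitative closeness produced by Lemma \ref{L:lin} and \eqref{IPP1} into the exact vanishing of $(h_0, h_1)$ on the exterior. The crude $(1/10)$-closeness in Lemma \ref{L:CPCarlos} is not sharp enough on its own; one must exploit the genuine fifth-power nonlinear decay in the small-data regime, the rigidity case of the Sobolev trace inequality $R\,h_0(R)^2 \leq (4\pi)^{-1}\|h_0\|_{\hdot(r>R)}^2$ (saturated exactly when $rh_0$ is constant on $(R,\infty)$), and a careful iteration as $R_1 \to \infty$ to promote the asymptotic matching to the claimed exact one.
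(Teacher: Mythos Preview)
Your overall plan matches the paper's: linearize first around $V\equiv 0$, identify a limit $c=\lim_{r\to\infty} r u_0(r)$, and if $c\neq 0$ linearize around the matching rescaled $W$ using Claim~\ref{C:WV_OK}\eqref{I:V_OK} and Lemma~\ref{L:CPCarlos}. The ingredients you list are exactly the right ones.

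The gap is in your step~5, the existence of $c$. The $(1/10)$--closeness from Lemma~\ref{L:CPCarlos} combined with Lemma~\ref{L:lin} and \eqref{IPP1} yields only a \emph{linear} inequality of the form
\[
\int_{r_0}^{\infty}\bigl[(\partial_r v_0)^2+v_1^2\bigr]\,dr\;\le\;\kappa\,\frac{v_0(r_0)^2}{r_0},\qquad v_0=r u_0,
\]
for some small absolute $\kappa>0$. This gives $|v_0(2r)-v_0(r)|\le \sqrt{\kappa}\,|v_0(r)|$, hence a two--sided multiplicative control $|v_0(2^{n}r)|\asymp \theta^{n}|v_0(r)|$ with $\theta$ close to $1$, but no summability: $v_0(r)$ could oscillate forever without converging. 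So the sentence ``the approximating constants on overlapping annuli must coincide, I deduce the existence of $c$'' does not go through with the crude estimate.

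What the paper does (Lemmas~\ref{L:funct_eq} and~\ref{L:limit}) is to use, in the $V\equiv 0$ regime, the genuine small-data gain $\|\vec{\tu}(t)-\vec{\tu}_{\lin}(t)\|\le C\|(\tu_0,\tu_1)\|^{5}$, which sharpens the inequality to
\[
\int_{r_0}^{\infty}\bigl[(\partial_r v_0)^2+v_1^2\bigr]\,dr\;\le\;C_0\,\frac{|v_0(r_0)|^{10}}{r_0^{5}},
\]
hence $|v_0(2r)-v_0(r)|\le C|v_0(r)|^{5}/r^{2}$. A short bootstrap then yields $v_0(r)\to\ell$ with the quantitative rate $|v_0(r)-\ell|\le C/r^{2}$. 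This rate is the real engine: in the case $\ell=0$ it contradicts the lower bound $|v_0(2^{n}r)|\ge (3/4)^{n}|v_0(r)|$, forcing $v_0\equiv 0$ for large $r$; in the case $\ell\neq 0$ it gives $|H_0(r)|=|r(u_0-\iota\lambda^{-1/2}W(\cdot/\lambda))(r)|\le C/r^{2}$, which is exactly what is needed to close the iteration after linearizing around $W$. Note that in this second stage the paper does \emph{not} use a fifth-power gain---the linearized operator around $W$ has a genuine linear term $5W^4 h$, so only the $(1/10)$--closeness of Lemma~\ref{L:CPCarlos} is available and yields the linear inequality $\int_{r_0}^\infty[(\partial_r H_0)^2+H_1^2]\,dr\le \tfrac{1}{16}H_0(r_0)^2/r_0$; what makes this sufficient is the $1/r^2$ decay of $H_0$ inherited from the first stage.

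In short: you correctly flag the fifth-power smallness as essential, but you invoke it too late. It is needed already to establish the existence and rate of the limit $c$, not only for the ``final rigidity''; once that rate is in hand, the rigidity in both cases is a clean dyadic iteration, and the $c\neq 0$ case needs nothing beyond the $(1/10)$--bound.
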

\begin{proof}[End of the  proof of Proposition \ref{P:channel1}]
We first assume Lemma \ref{L:channel1} and prove Proposition \ref{P:channel1}. Let $u$ be a radial solution of \eqref{CP}. 

We first note that the conclusion of Proposition \ref{P:channel1} holds when $(u_0,u_1)$ is compactly supported.
Indeed, in this case, the proof of \S \ref{SS:cpct_supp} remains valid, replacing  $V$ and $W$ by $0$, and using the standard small data Cauchy theory for equation \eqref{CP} instead of Lemma \ref{L:CPCarlos}.
We note that this case was treated in \cite{DuKeMe11P} (see Lemma 3.4).

Assume that $(u_0,u_1)$ is not compactly supported, and let $(\tu_0,\tu_1)=\Psi_R(u_0,u_1)$, where $R>0$ is chosen large, so that ($\eps>0$ is given by the small data Cauchy theory for \eqref{CP}):
$$ 0<\|(\tu_0,\tu_1)\|_{\hdot\times L^2}<\eps.$$
Let $\tu$ be the solution of \eqref{CP} with initial data $(\tu_0,\tu_1)$. According to Claim \ref{C:limsup}, there exists $\eta>0$ such that $\tu$ satisfies \eqref{channel1} for all $t\geq 0$ or all $t\leq 0$ unless:
\begin{multline}
 \label{no_channel_bis}
\lim_{t\to +\infty}\int_{|x|>R+|t|} |\nabla \tu(t,x)|^2+(\partial_t\tu(t,x))^2\,dx\\=\lim_{t\to -\infty} \int_{|x|>R+|t|} |\nabla \tu(t,x)|^2+(\partial_t\tu(t,x))^2\,dx=0. 
\end{multline}
Assume \eqref{no_channel_bis}. Then by Lemma \ref{L:channel1}, $(\tu_0,\tu_1)$ is compactly supported or there exists $\lambda>0$ and $\iota\in \{\pm 1\}$ such that $(\tu_0,\tu_1)-\left(\frac{\iota}{\lambda^{1/2}}W\left(\frac{x}{\lambda}\right),0\right)$ is compactly supported. In the first case, $(u_0,u_1)$ is compactly supported, which is already excluded. In the second case, $(u_0,u_1)-\left(\frac{\iota}{\lambda^{1/2}}W\left(\frac{x}{\lambda}\right),0\right)$ is compactly supported, contradicting the assumptions of Proposition \ref{P:channel1} and concluding the proof. 
\end{proof}
It remains to prove Lemma \ref{L:channel1}. Let $u$ be as in Lemma \ref{L:channel1}. We let $v=ru$, $v_0=ru_0$ and $v_1=ru_1$. We first show two Lemmas.
\begin{lemma}
 \label{L:funct_eq}
There exists a constant $C_0>0$ (not depending on $u$) such if for some $r_0>0$
\begin{equation}
 \label{assu_r0}
\int_{r_0}^{+\infty} \left((\partial_ru_0)^2+u_1^2\right)r^2\,dr\leq \delta_0,
\end{equation} 
where $\delta_0>0$ is small, then
\begin{equation}
 \label{funct_eq}
\int_{r_0}^{+\infty} \Big((\partial_r v_0)^2+v_1^2\Big)\,dr\leq C_0\frac{|v_0(r_0)|^{10}}{r_0^5}.
\end{equation} 
Furthermore, for all $r,r'$ with $r_0\leq r\leq r'\leq 2r$,
\begin{equation}
 \label{bound_v0}
\left|v_0(r)-v_0(r')\right|\leq \sqrt{C_0}\frac{|v_0(r)|^5}{r^2} \leq \sqrt{C_0}\delta_0^2|v_0(r)|.
\end{equation}
\end{lemma}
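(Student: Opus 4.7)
The plan is to combine the no-channel hypothesis \eqref{no_channel} of Lemma~\ref{L:channel1} with a small-data analysis of the solution starting from the truncated datum $\Psi_{r_0}(u_0,u_1)$, and to apply the linear channel estimate (Lemma~\ref{L:lin}) to the corresponding free evolution. Up to enlarging $r_0$ (which only strengthens \eqref{assu_r0}), we may assume $r_0\geq R$, where $R$ is the constant in \eqref{no_channel}. Let $(\tu_0,\tu_1):=\Psi_{r_0}(u_0,u_1)$ and let $\tu$ denote the solution of \eqref{CP} with this data. Setting $\delta^2:=\|(\tu_0,\tu_1)\|_{\hdot\times L^2}^2=\int_{|x|>r_0}(|\nabla u_0|^2+u_1^2)\,dx\leq 4\pi\delta_0$, the small-data Cauchy theory for \eqref{CP} (Lemma~\ref{L:CPCarlos} with $V\equiv 0$, sharpened by the Strichartz bound $\|\vec\tu(t)-\vec\tu_L(t)\|_{\hdot\times L^2}\lesssim\|\tu\|_{L^5_tL^{10}_x}^5\lesssim\delta^5$) yields a global scattering solution $\tu$ with
\begin{equation*}
\sup_{t\in\RR}\|\vec\tu(t)-\vec\tu_L(t)\|_{\hdot\times L^2}\leq C\delta^5,\qquad \tu_L(t):=S(t)(\tu_0,\tu_1).
\end{equation*}
By finite speed of propagation $\tu=u$ on $\{|x|>r_0+|t|\}$, and together with $r_0\geq R$ and \eqref{no_channel} this gives $\lim_{t\to\pm\infty}\int_{|x|>r_0+|t|}(|\nabla\tu|^2+(\partial_t\tu)^2)\,dx=0$.

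Let $E(r_0):=\int_{r_0}^\infty((\partial_rv_0)^2+v_1^2)\,dr$ and $b:=v_0(r_0)^2/r_0$; the identity \eqref{IPP1} gives $\delta^2=4\pi(E(r_0)+b)$. Applying Lemma~\ref{L:lin} to $\tu_L$ and using \eqref{IPP1} (the resulting boundary term $-(r_0+|t|)\tu_L^2(t,r_0+|t|)$ appears with a favorable sign and may be dropped), we obtain that, for all $t\geq 0$ or for all $t\leq 0$,
\begin{equation*}
\int_{|x|>r_0+|t|}(|\nabla\tu_L|^2+(\partial_t\tu_L)^2)\,dx\geq 2\pi E(r_0).
\end{equation*}
The elementary bound $|\nabla\tu_L|^2\leq 2|\nabla\tu|^2+2|\nabla(\tu_L-\tu)|^2$ (and the analogue for $\partial_t$), combined with the Strichartz estimate above and the previous vanishing limit, give upon letting $|t|\to\infty$ along the sign provided by Lemma~\ref{L:lin},
\begin{equation*}
E(r_0)\leq C_1\delta^{10}=C_1(4\pi)^5(E(r_0)+b)^5.
\end{equation*}
If $E(r_0)\leq b$ we conclude $E(r_0)\leq C_1(4\pi)^5(2b)^5=C_0\,b^5=C_0|v_0(r_0)|^{10}/r_0^5$, which is \eqref{funct_eq}; if $E(r_0)>b$ then $E(r_0)\leq C_1(4\pi)^5(2E(r_0))^5$ forces $E(r_0)\geq c_0>0$, contradicting $E(r_0)\leq\delta^2\leq 4\pi\delta_0$ when $\delta_0$ is sufficiently small.

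For \eqref{bound_v0}, fix $r_0\leq r\leq r'\leq 2r$. Cauchy--Schwarz gives $|v_0(r)-v_0(r')|\leq(r'-r)^{1/2}\sqrt{E(r)}\leq r^{1/2}\sqrt{E(r)}$. Since \eqref{assu_r0} holds a fortiori at radius $r\geq r_0$, applying \eqref{funct_eq} at $r$ yields $E(r)\leq C_0|v_0(r)|^{10}/r^5$, whence $|v_0(r)-v_0(r')|\leq\sqrt{C_0}|v_0(r)|^5/r^2$, which is the first inequality in \eqref{bound_v0}. The radial trace bound $|v_0(r)|^2/r=r u_0(r)^2\leq\frac{1}{4\pi}\int_{|x|>r}|\nabla u_0|^2\,dx\leq\delta_0$ (from $u_0(r)=-\int_r^\infty u_0'(s)\,ds$ and Cauchy--Schwarz) then gives $|v_0(r)|^4/r^2\leq\delta_0^2$, hence the second inequality. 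The main obstacle is the bootstrap in the middle paragraph: the channel estimate alone only delivers $E(r_0)\lesssim\delta^{10}$, and it is the identity $\delta^2=4\pi(E(r_0)+v_0(r_0)^2/r_0)$ that lets us promote this to the sharp $E(r_0)\lesssim|v_0(r_0)|^{10}/r_0^5$.
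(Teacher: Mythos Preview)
Your proof is correct and follows the same route as the paper: truncate the data at $r_0$, compare the small-data nonlinear solution with its free evolution via the sharp Duhamel bound $\|\vec{\tu}(t)-\vec{\tu}_L(t)\|\lesssim\delta^5$, apply the linear channel Lemma~\ref{L:lin}, and send $|t|\to\infty$ using \eqref{no_channel} to reach $E(r_0)\lesssim\bigl(E(r_0)+v_0(r_0)^2/r_0\bigr)^5$; your dichotomy $E(r_0)\le b$ versus $E(r_0)>b$ is just an equivalent repackaging of the paper's absorption of the $E(r_0)$ term on the right using $E(r_0)\le\delta_0$, and your derivation of \eqref{bound_v0} from \eqref{funct_eq} matches the paper's.

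One quibble on phrasing: the sentence ``up to enlarging $r_0$ (which only strengthens \eqref{assu_r0}), we may assume $r_0\geq R$'' is not a valid reduction as written---enlarging $r_0$ changes the conclusion \eqref{funct_eq} you are asked to prove, not merely the hypothesis. What is true (and sufficient) is that the paper's own proof also tacitly needs $r_0\geq R$ at the step where \eqref{no_channel} is invoked to kill $\int_{r_0+|t|}^{\infty}\bigl((\partial_r u)^2+(\partial_t u)^2\bigr)r^2\,dr$, and this restriction is harmless since Lemma~\ref{L:limit} and the remainder of the proof of Lemma~\ref{L:channel1} only ever apply the estimate for large $r_0$.
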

\begin{proof}
 We first assume \eqref{funct_eq} and prove \eqref{bound_v0}. If $r_0\leq r\leq r'\leq 2r$, we have by \eqref{funct_eq}:
\begin{equation*}
 |v_0(r)-v_0(r')|\leq \left|\int_r^{r'} \partial_r v_0(\sigma)\,d\sigma\right|\leq \sqrt{r}\sqrt{\int_{r}^{+\infty}(\partial_r v_0(\sigma))^2\,d\sigma}\leq \sqrt{C_0r}\, \frac{|v_0(r)|^5}{r^{5/2}},
\end{equation*}
hence the first inequality in \eqref{bound_v0}. If $r\geq r_0$, then (see formula \ref{IPP1}),
$$\frac{1}{r} v_0^2(r)=ru_0^2(r)\leq \int_r^{+\infty} (\partial_r u_0(\sigma))^2\sigma^2\,d\sigma\leq \delta_0,$$
which yields the second inequality in \eqref{bound_v0}.

We next show \eqref{funct_eq}. Let $u_{\lin}(t,r)=S(t)(u_0,u_1)$ and $v_{\lin}=r u_{\lin}$. By Lemma \ref{L:lin}, the following holds for all $t\geq 0$ or for all $t\leq 0$
\begin{equation}
 \label{bound_vL}
\int_{r_0+|t|}^{+\infty} (\partial_r v_{\lin}(t,r))^2+(\partial_t v_{\lin}(t,r))^2\,dr\geq \frac 12 \int_{r_0}^{+\infty} (\partial_r v_0(r))^2+v_1^2(r)\,dr.
\end{equation}  
Recall the definition of $\Psi_R$ from the beginning of Subsection \ref{SS:preliminaries}.
Let $(\tu_0,\tu_1)=\Psi_{r_0}(u_0,u_1)$, $\tu_{\lin}=S(t)(\tu_0,\tu_1)$, and $\tu$ the solution of \eqref{CP} with initial data $(\tu_0,\tu_1)$. By assumption \eqref{assu_r0}, $\|(\tu_0,\tu_1)\|_{\hdot\times L^2}^2\leq \delta_0$. Taking $\delta_0$ small, we get by the small data Cauchy theory that for all $t\in \RR$,
\begin{multline*}
 \left\| (\tu-\tu_{\lin},\partial_t \tu-\partial_t\tu_{\lin})(t)\right\|_{\hdot\times L^2}\leq C\|(\tu_0,\tu_1)\|_{\hdot\times L^2}^5= C\left(\int_{r_0}^{+\infty} \left((\partial_r u_0)^2+u_1^2\right)r^2\,dr\right)^{5/2}\\
=C\left(\int_{r_0}^{+\infty} \left((\partial_r v_0)^2+v_1^2\right)\,dr+r_0 u_0^2(r_0)\right)^{5/2}.
\end{multline*}
Hence
\begin{multline*}
 \int_{r_0+|t|}^{+\infty} \left(\left(\partial_r \tu_{\lin}(t)\right)^2+\left(\partial_t \tu_{\lin}(t)\right)^2\right)r^2\,dr
\\
\leq C\int_{r_0+|t|}^{+\infty} \left(\left(\partial_r \tu(t)\right)^2+\left(\partial_t \tu(t)\right)^2\right)r^2\,dr+ C\left(\int_{r_0}^{+\infty}\left((\partial_r v_0)^2+v_1^2\right)\,dr+r_0u_0^2(r_0)\right)^5.
\end{multline*}
By finite speed of propagation, 
$$ \vec{u}(t,r)=\vecc{\tu}(t,r)\text{ and }\vec{u}_{\lin}(t,r)=\vecc{\tu}_{\lin}(t,r),\quad r\geq r_0+|t|,$$
and we obtain:
\begin{multline}
\label{nice_bound}
 \int_{r_0+|t|}^{+\infty} \left(\left(\partial_r u_{\lin}(t)\right)^2+\left(\partial_t u_{\lin}(t)\right)^2\right)r^2\,dr
\\
\leq C\int_{r_0+|t|}^{+\infty} \left(\left(\partial_r u(t)\right)^2+\left(\partial_t u(t)\right)^2\right)r^2\,dr+ C\left(\int_{r_0}^{+\infty}\left((\partial_r v_0)^2+v_1^2\right)\,dr+r_0u_0^2(r_0)\right)^5.
\end{multline}
Combining \eqref{bound_vL} and \eqref{nice_bound}, we see that the following holds for all $t\geq 0$ or for all $t\leq 0$:
\begin{multline}
\label{nice_bound2}
 \frac 12 \int_{r_0}^{+\infty} \left((\partial_r v_0)^2+v_1^2\right)\,dr \\
\leq C\int_{r_0+|t|}^{+\infty} \left(\left(\partial_r u(t)\right)^2+\left(\partial_t u(t)\right)^2\right)r^2\,dr+ C\left(\int_{r_0}^{+\infty}\left((\partial_r v_0)^2+v_1^2\right)\,dr+r_0u_0^2(r_0)\right)^5.
\end{multline}
Letting $t\to +\infty$ or $t\to -\infty$ in \eqref{nice_bound2}, we see that the first term of the right-hand side of \eqref{nice_bound2} goes to $0$ by our assumption on $u$. Since
$$ \int_{r_0}^{+\infty}\left((\partial_r v_0)^2+v_1^2\right)\,dr\leq \int_{r_0}^{+\infty}\left((\partial_r u_0)^2+u_1^2\right)r^2\,dr\leq \delta_0,$$
and $\delta_0$ is small, we can neglect the term $\int_{r_0}^{+\infty}\left((\partial_r v_0)^2+v_1^2\right)\,dr$ in the right-hand side of \eqref{nice_bound2}. Noting that $r_0^5 u^{10}_0(r_0)=\frac{v_0^{10}(r_0)}{r_0^5}$, we get \eqref{funct_eq}.
\end{proof}
\begin{lemma}
\label{L:limit}
The function $v_0(r)$ has a limit $\ell\in \RR$ as $r\to +\infty$. Furthermore, there exists $C>0$ such that 
\begin{equation}
\label{bound_v-l}
 \forall r\geq 1,\quad |v_0(r)-\ell|\leq \frac{C}{r^2}.
\end{equation} 
\end{lemma}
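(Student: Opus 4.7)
The plan is to turn the self-improving estimate \eqref{bound_v0} into a quantitative rate of convergence of $v_0$ at infinity. The key observation is that \eqref{bound_v0} may be applied at any dyadic scale, so I would iterate along $r_k := 2^k r_0$: first bootstrap to show that $v_0(r_k)$ is uniformly bounded, then deduce that it is Cauchy, then transfer the resulting tail bound from the dyadic sequence to all $r$.

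First I would fix $r_0 \geq 1$ so large that $\int_{r_0}^{+\infty}\left((\partial_r u_0)^2 + u_1^2\right) r^2\,dr \leq \delta_0$, where $\delta_0$ is the small constant of Lemma \ref{L:funct_eq}, further reduced so that the bootstrap below closes. Because this tail hypothesis is preserved when $r_0$ is enlarged, Lemma \ref{L:funct_eq} may be applied with $r_0$ replaced by any $r \geq r_0$; this yields the uniform doubling bound
$$|v_0(r) - v_0(r')| \leq \sqrt{C_0}\,\frac{|v_0(r)|^5}{r^2}, \quad r_0 \leq r \leq r' \leq 2r.$$
Moreover \eqref{IPP1} gives $|v_0(r)|^2/r = r u_0^2(r) \leq \delta_0$ for $r \geq r_0$, a fact I would use to rewrite the right-hand side above as $\sqrt{C_0}\,\delta_0^2\,|v_0(r)|$ whenever convenient.

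Setting $a_k := v_0(r_k)$, the doubling bound at scale $r_k$ gives $|a_{k+1} - a_k| \leq \sqrt{C_0}\,|a_k|^5/(4^k r_0^2)$. I would then prove by induction that $|a_k| \leq 2|a_0|$ for all $k$: telescoping and using the induction hypothesis bounds the total deviation by a constant times $|a_0|^5/r_0^2 = |a_0| \cdot (|a_0|^2/r_0)^2 \leq \delta_0^2 |a_0|$, which is absorbed into $|a_0|$ by a final smallness choice of $\delta_0$. Once uniform boundedness is in hand, the same telescoping sum is absolutely convergent, so $(a_k)$ converges to a limit $\ell \in \RR$ and
$$|a_k - \ell| \leq \sqrt{C_0}\sum_{j \geq k}\frac{|a_j|^5}{4^j r_0^2} \leq \frac{C}{r_k^2}.$$

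To finish, I would pass from the dyadic sequence to all $r \geq r_0$: for $r \in [r_k, r_{k+1}]$ the doubling bound at scale $r_k$ gives $|v_0(r) - a_k| \leq C/r_k^2 \leq 4C/r^2$, which combined with $|a_k - \ell| \leq C/r_k^2$ yields $|v_0(r) - \ell| \leq C'/r^2$. On $[1, r_0]$ the function $v_0 = ru_0$ is continuous (radial $\hdot$-functions are continuous away from the origin) and hence bounded, so enlarging the constant extends \eqref{bound_v-l} to all $r \geq 1$. The main obstacle is the self-referential character of \eqref{bound_v0} -- the increment bound involves $|v_0(r)|$ itself, so one cannot sum it without first controlling $|v_0|$ -- and this is resolved by the bootstrap above, which exploits smallness of $\delta_0$ to decouple the feedback.
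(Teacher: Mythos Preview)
Your proof is correct and follows the same dyadic-iteration strategy as the paper: apply the doubling estimate \eqref{bound_v0} along the scales $r_k=2^k r_0$, control $|v_0(r_k)|$, sum the telescoping increments, and then interpolate to all $r$. The only substantive difference is in the intermediate control of $|v_0|$: the paper first derives the weak growth $|v_0(r)|\le Cr^{1/10}$ from the second inequality in \eqref{bound_v0}, uses it to show convergence, and only then invokes boundedness of $v_0$ to obtain the sharp $1/r^2$ rate; you instead bootstrap directly to the uniform bound $|a_k|\le 2|a_0|$ using $|a_0|^2/r_0\le\delta_0$, which is slightly more efficient and bypasses the $r^{1/10}$ detour. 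Both routes rest on the same mechanism and neither buys anything the other does not.
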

\begin{proof}
 We first claim that there exists $C>0$ such that for large $r$:
\begin{equation}
 \label{weak_bound_v}
|v_0(r)|\leq Cr^{1/10}.
\end{equation} 
Indeed by \eqref{bound_v0}, if $n\in \NN$, $|v_0(2^{n+1}r_0)|\leq (1+\sqrt{C_0})\delta_0^2|v_0(2^nr_0)|$. Hence by an elementary induction
$$ |v_0(2^nr_0)|\leq (1+\sqrt{C_0})^n\delta_0^{2n}|v_0(r_0)|.$$
Chosing a smaller $\delta_0$ if necessary, we can assume $(1+\sqrt{C_0})\delta_0^2\leq 2^{1/10}$, and thus
$$ |v_0(2^nr_0)|\leq 2^{\frac{n}{10}}|v_0(r_0)|,$$
which shows the inequality \eqref{weak_bound_v} for $r=2^nr_0$, $n\in \NN$. The general case for \eqref{weak_bound_v} follows from \eqref{bound_v0}.

We next prove that $v_0(r)$ has a limit as $r\to+\infty$. By \eqref{bound_v0}, we get, for $n\in \NN$,
$$ \left|v_0(2^nr_0)-v_0(2^{n+1}r_0)\right|\leq \sqrt{C_0}\frac{\left|v_0(2^nr_0)\right|^5}{\left(2^nr_0\right)^2}.$$
By \eqref{weak_bound_v}, there exists $C>0$ such that
$$ \left|v_0(2^nr_0)-v_0(2^{n+1}r_0)\right|\leq \frac{C}{(2^n)^{2-5/10}}=\frac{C}{2^{\frac{3}{2}n}}.$$
Using that $\sum \frac{1}{2^{\frac{3}{2}n}}$ converges, we get
$$ \sum_{n\geq 1} \left|v_0(2^nr_0)-v_0(2^{n+1}r_0)\right|<\infty,$$
which shows that there exists $\ell \in \RR$ such that 
$$ \lim_{n\to+\infty} v(2^nr_0)=\ell.$$
Using \eqref{bound_v0} and  \eqref{weak_bound_v}, we get
$$ \lim_{r\to+\infty} v_0(r)=\ell.$$

It remains to prove \eqref{bound_v-l}. Using that $v_0(r)$ converges as $r\to \infty$, we get that it is bounded for $r\geq r_0$, and thus the first inequality in \eqref{bound_v0} implies, for $r\geq r_0$ and $n\in \NN$,
$$\left|v_0(2^{n+1}r)-v_0(2^nr)\right|\leq \frac{C}{2^{2n}r^2}.$$
Summing up, we get
$$ \left|\ell - v(r)\right|=\left|\sum_{n\geq 0}(v(2^{n+1}r)-v(2^nr))\right|\leq \frac{C}{r^2}\sum_{n\geq 0}\frac{1}{4^n},$$
which concludes the proof of Lemma \ref{L:limit}.
\end{proof}

\begin{proof}[End of the proof of Lemma \ref{L:channel1}]
 Consider the limit $\ell$ of $v_0$ defined in Lemma \ref{L:limit}. We distinguish between two cases, depending on $\ell$.

\EMPH{The case $\ell=0$}

In this case we will show that $(v_0,v_1)$ is compactly supported. We fix a large $r$. By \eqref{bound_v0}, using that $\delta_0$ is small, 
$$\left|v_0(2^{n+1}r)\right|\geq \frac 34 \left|v_0(2^nr)\right|,\quad \forall n\in \NN.$$
By induction, we obtain $|v_0(2^nr)|\geq \left(\frac 34 \right)^{n} |v_0(r)|$. Since $\ell=0$, \eqref{bound_v-l} in Lemma \ref{L:limit} implies: $|v_0(2^nr)|\leq \frac{C}{4^n}$. Hence
$$ \forall n,\quad \frac{C}{4^n}\geq \left(\frac{3}{4}\right)^n |v_0(r)|.$$
Letting $n\to +\infty$, we get a contradiction unless $v_0(r)=0$. Since $r$ is any large positive number, we have shown that the support of $v_0$ is compact. By \eqref{funct_eq}, we get that the support of $v_1$ is also compact, concluding this case.

\EMPH{The case $\ell\neq 0$}
In this case we will show that there exists $\lambda>0$ and a sign $+$ or $-$ such that $\left(u_0\pm \frac{1}{\lambda^{1/2}}W\left(\frac{x}{\lambda}\right),u_1\right)$ is compactly supported. We note that for large $r$,
$$\left|\frac{1}{\lambda^{1/2}}W\left(\frac{r}{\lambda}\right)-\frac{\sqrt{3}\lambda^{1/2}}{r}\right|\leq \frac{C}{r^3}$$
Thus Lemma \ref{L:limit} implies the existence of a constant $C>0$ such that
$$ \left|\pm \frac{1}{\lambda^{1/2}}W\left(\frac{r}{\lambda^{1/2}}\right)-u_0(r)\right|\leq \frac{C}{r^3},\quad r\geq 1,$$
where $\lambda=\frac{\ell^2}{3}$ and the sign $\pm$ is the sign of $\ell$. Rescaling $u$, and replacing $u$ by $-u$ if $\ell<0$, we can assume:
\begin{equation}
 \label{u_close_W}
\left|u_0(r)-W(r)\right|\leq \frac{C}{r^3},\quad r\geq 1.
\end{equation} 

Let $h=u-W$, $H=rh$. We claim that for a large $R_0>0$ we have 
\begin{equation}
 \label{ineq_H}
\forall r_0>R_0,\quad \int_{r_0}^{+\infty} \left((\partial_rH_0)^2+H_1^2\right)\,dr\leq \frac{1}{16} \frac{H_0^2(r_0)}{r_0},
\end{equation} 
where $(H_0,H_1)=(H,\partial_t H)_{\restriction t=0}$. Assuming \eqref{ineq_H}, it is easy to conclude that $(H_0(r),H_1(r))=(0,0)$ for large $r$ exactly as in the case $\ell=0$. Indeed, \eqref{ineq_H} implies, for large $r$ and $n\in \NN$:
\begin{equation*}
 \left|H_0(2^{n+1}r)-H_0(2^nr)\right|\leq 2^{\frac{n}{2}}\sqrt{r}  \sqrt{\int_{2^n r}^{2^{n+1}r} (\partial_r H_0(s))^2\,ds}\leq 2^{\frac{n}{2}}\sqrt{r}\frac{|H_0(2^nr)|}{2^{\frac{n}{2}}\sqrt{r}}\times \frac{1}{4},
\end{equation*}
which implies $|H_0(2^{n+1}r)|\geq \frac{3}{4} |H_0(2^nr)|$. By an elementary induction, $|H_0(2^nr)|\geq \left(\frac{3}{4}\right)^n|H_0(r)|$. By \eqref{u_close_W}, we have $|H_0(2^nr)|\leq \frac{C}{4^nr^2}$. Letting $n\to +\infty$, we get a contradiction unless $H_0(r)=0$. Thus $H_0$ is compactly supported. By \eqref{ineq_H} again, we obtain that $H_1$ is compactly supported concluding the proof. 

It remains to show \eqref{ineq_H}. Consider the large positive number $R_0$ and the potential $V$ defined by Claim \ref{C:WV_OK}. Let $r_0>R_0$, and define 
$$ (g_0,g_1)=\Psi_{r_0}(h_0,h_1).$$
Let $g_{\lin}(t)=S(t)(g_0,g_1)$, and consider the solution $g$ of 
\begin{equation}
\label{CP_g3}
\left\{ 
\begin{gathered}
\partial_t^2 g -\Delta g=5V^4g+10V^3g^2+10V^2g^3+5Vg^4+g^5,\quad (t,x)\in \RR\times \RR^3\\
(g,\partial_t g)_{\restriction t=0}=(g_0,g_1).
\end{gathered}\right.
\end{equation}
By Lemma \ref{L:CPCarlos}, we get that $g$ is globally defined and satisfies
\begin{equation}
 \label{g_gL_bis}
\sup_{t\in \RR} \left\|\vec{g}(t)-\vec{g}_{\lin}(t)\right\|_{\hdot\times L^2} \leq \frac{1}{10} \|(g_0,g_1)\|_{\hdot\times L^2}.
\end{equation} 
By Lemma \ref{L:lin}, the following holds for all $t\geq 0$ or for all $t\leq 0$:
\begin{equation}
\label{channel_g3}
 \int_{r_0+|t|}^{+\infty} \left[(\partial_r(rg_{\lin}))^2+(\partial_t(r g_{\lin}))^2\right]\,dr\geq \frac 12 \int_{r_0}^{+\infty} \left[(\partial_r H_0)^2+H_1^2\right]\,dr.
\end{equation} 
Combining \eqref{g_gL_bis} and \eqref{channel_g3}, we get that for all $t\geq 0$ or for all $t\leq 0$,
\begin{multline}
\label{gros_calcul}
 \frac 12 \int_{r_0}^{+\infty} \left[(\partial_r H_0)^2+H_1^2\right]\,dr\leq \int_{r_0+|t|}^{+\infty}
\left[(\partial_r(rg_{\lin}))^2+(\partial_t(r g_{\lin}))^2\right]\,dr\\
\leq \int_{r_0+|t|}^{+\infty}
\left((\partial_r g_{\lin})^2+(\partial_t g_{\lin})^2\right)r^2\,dr\\
\leq \frac{1}{50}\int_{r_0}^{+\infty}\left((\partial_r g_0)^2+g_1^2\right)r^2dr+2\int_{r_0+|t|}^{+\infty} \left((\partial_rg)^2+(\partial_tg)^2\right)r^2\,dr.
\end{multline}
By finite speed of propagation (as in Step 2 of \S \ref{SS:cpct_supp}) we get 
$$ \vec{g}(t,r)=\vec{h}(t,r)\text{ for } r\geq r_0+|t|.$$
Letting $t\to +\infty$ or $t\to -\infty$ and using that
$$ \lim_{t\to \pm\infty} \int_{r_0+|t|}^{+\infty}(\partial_r W)^2r^2\,dr=0,$$
and our assumption on $u$, 
we obtain that the second term of the last line of \eqref{gros_calcul} goes to $0$. Hence \eqref{gros_calcul} implies
\begin{multline*}
 \frac 12 \int_{r_0}^{+\infty} \left[(\partial_r H_0)^2+H_1^2\right]\,dr\leq \frac{1}{50}\int_{r_0}^{+\infty}\left((\partial_r g_0)^2+g_1^2\right)r^2dr\\=\frac{1}{50}\left[\int_{r_0}^{+\infty} \left[(\partial_r H_0)^2+H_1^2\right]\,dr+\frac{1}{r_0}H_0^2(r_0)\right],
\end{multline*}
hence \eqref{ineq_H}.
\end{proof}
\section{Proof of the main result in the global case}
\label{S:global}
In this section we prove Theorem \ref{T:main} in the global case: we show that all global radial solutions of \eqref{CP}, can be expanded as in \eqref{expansion}. We start by recalling a few useful facts about the profile decomposition of Bahouri and G\'erard \cite{BaGe99}. In Subsection \ref{SS:boundedness}, we prove, using finite speed of propagation and convexity/monotonicity as in \cite{KeMe08}, that a global solution is bounded along a sequence of times going to infinity. In Subsection \ref{SS:free_wave}, we show that a global solution $u$ has a linear behaviour at finite distance from the boundary $\{|x|=|t|\}$ of the wave cone, thus constructing the free wave $v_{\lin}$ of the expansion \eqref{expansion}. The core of the proof is Subsection \ref{SS:sequences} where we use the channel of energy method and the results of Section \ref{S:channels} to prove that an expansion as \eqref{expansion} holds (after extraction of a subsequence) along any sequence of times going to infinity for which the solution is bounded. In Subsection \ref{SS:all_times} we conclude the proof, using continuity arguments to chose the signs $\iota_j$ and the scaling parameters $\lambda_j(t)$ independently of the choice of the sequence of times.
\subsection{Preliminaries on profile decomposition}
We gather in this subsection well known facts about the profile decomposition of Bahouri and G\'erard \cite{BaGe99}. 
\subsubsection{Definition}
Consider a sequence $\left\{(u_{0,n},u_{1,n})\right\}_n$ of radial functions in $\hdot\times L^2$, which is bounded in $\hdot\times L^2$. 
By \cite{BaGe99}, there exists a subsequence of $\left\{(u_{0,n},u_{1,n})\right\}_n$ (that we still denote by $\left\{(u_{0,n},u_{1,n})\right\}_n$) with the following properties.

There exist a sequence $(U^j_{\lin})_{j\geq 1}$ of radial solutions of the linear equation \eqref{lin_CP} with initial data $(U^j_0,U^j_1)\in \hdot\times L^2$, and, for $j\geq 1$, sequences $\{\lambda_{j,n}\}_n$, $\{t_{j,n}\}_n$ with $\lambda_{j,n}>0$, $t_{j,n}\in \RR$ satisfying the pseudo-orthogonality relation
\begin{equation}
\label{ortho_param}
j\neq k\Longrightarrow \lim_{n\to \infty} \frac{\lambda_{j,n}}{\lambda_{k,n}}+\frac{\lambda_{k,n}}{\lambda_{j,n}}+\frac{|t_{j,n}-t_{k,n}|}{\lambda_{j,n}}=+\infty.
\end{equation} 
such that, if
\begin{equation}
\label{decompo_profil}
\left\{\begin{aligned}
w_{0,n}^J(x)&:=u_{0,n}-\sum_{j=1}^J \frac{1}{\lambda_{j,n}^{\frac{1}{2}}}U_{\lin}^j\left(\frac{-t_{j,n}}{\lambda_{j,n}},\frac{x}{\lambda_{j,n}}\right),\\
w_{1,n}^J(x)&:=u_{1,n}-\sum_{j=1}^J \frac{1}{\lambda_{j,n}^{\frac{3}{2}}}\partial_t U_{\lin}^j\left(\frac{-t_{j,n}}{\lambda_{j,n}},\frac{x}{\lambda_{j,n}}\right),
\end{aligned}\right.
\end{equation}
then
\begin{equation}
\label{small_w}
\lim_{n\rightarrow+\infty}\limsup_{J\rightarrow+\infty} \left\|w_n^J\right\|_{L^8(\RR^4_{t,x})}=0,
\end{equation}
where
\begin{equation*}
 w_n^J(t)=S(t)(w_{0,n}^J,w_{1,n}^J).
\end{equation*} 
One says that $(u_{0,n},u_{1,n})_n$ admits a \emph{profile decomposition} with profiles $\lf\{U_{\lin}^j\rg\}_j$ and parameters $\lf\{\lambda_{j,n},t_{j,n}\rg\}_{j,n}$.

The profiles can be constructed as follows. Let $v_n(t)=S(t)(u_{0,n},u_{1,n})$. Then 
\begin{gather}
\label{weak_CV_Uj}
\left(\lambda_{j,n}^{1/2} v_n\left(t_{j,n},\lambda_{j,n}\cdot\right),\lambda_{j,n}^{3/2} \partial_tv_n\left(t_{j,n},\lambda_{j,n}\cdot\right)\right)\xrightharpoonup[n\to \infty]{}(U_0^j,U_1^j),\\
\label{weak_CV_wJ}
j\leq J\Longrightarrow 
\left(\lambda_{j,n}^{1/2} w_n^J\left(t_{j,n},\lambda_{j,n}\cdot\right),\lambda_{j,n}^{3/2} \partial_tw_n^J\left(t_{j,n},\lambda_{j,n}\cdot\right)\right)\xrightharpoonup[n\to \infty]{}0,
\end{gather}
weakly in $\hdot\times L^2$. In other words, the initial data $(U^j_0,U^j_1)$ of the profiles are exactly the weak limits, in $\hdot\times L^2$, of sequences $\left\{\lambda_n^{1/2}v_n(t_n,\lambda_n \cdot),\lambda_n^{3/2}\partial_tv_n(t_n,\lambda_n \cdot)\right\}$, where $\left\{\lambda_n\right\}_n$, $\left\{t_n\right\}_n$ are sequences in $(0,\infty)$ and $\RR$ respectively.

The following expansions hold for all $J\geq 1$:
\begin{gather}
\label{pythagore1a} 
\left\|u_{0,n}\rg\|_{\hdot}^2=\sum_{j=1}^J \left\|U^j_{\lin}\left(\frac{-t_{j,n}} {\lambda_{j,n}}\right)\rg\|_{\hdot}^2+\left\|w_{0,n}^J\rg\|_{\hdot}^2+o_n(1)\\
\label{pythagore1b} 
\lf\|u_{1,n}\right\|^2_{L^2}=\sum_{j=1}^J \lf\|\partial_t U^j_{\lin}\left(\frac{-t_{j,n}} {\lambda_{j,n}}\right)\right\|^2_{L^2}+\lf\|w_{1,n}^J\right\|^2_{L^2}+o_n(1)\\
\label{pythagore2}
E(v_{0,n},v_{1,n})=\sum_{j=1}^J E\left(U^j_{\lin}\left(-\frac{t_{j,n}}{\lambda_{j,n}}\right),\partial_t U^j_{\lin}\left(-\frac{t_{j,n}} {\lambda_{j,n}}\right)\right)+E\left(w_{0,n}^J, w_{1,n}^J\right)+o_n(1).
\end{gather}
We denote, for simplicity:
\begin{equation}
\label{simplicity} 
U^j_{\lin,n}(t,x)=\frac{1}{\lambda_{j,n}^{1/2}}U^j_{\lin}\left(\frac{t-t_{j,n}}{\lambda_{j,n}},\frac{x}{\lambda_{j,n}}\right),
\end{equation}

\subsubsection{Approximation by a sum of profiles}

Translating in time and rescaling $U^j_{\lin}(t,x)$, and extracting subsequences, we will always assume that one of the following two cases occurs
\begin{equation}
\label{choice_param}
\forall n,\;t_{j,n}=0\quad\text{or}\quad\lim_{n\to\infty} \frac{-t_{j,n}}{\lambda_{j,n}}=\pm\infty. 
\end{equation} 
As a consequence, using the local well-posedness of \eqref{CP} in the first case and the existence of wave operators for \eqref{CP} in the second case, one can construct a solution $U^j$ of \eqref{CP} such that $-t_{j,n}/\lambda_{j,n}$ is in the domain of $U^j$ for large $n$ and 
$$\lim_{n\to \infty} \|\vecc{U}^j(-t_{j,n}/\lambda_{j,n})-\vecc{U}^j_{\lin}(-t_{j,n}/\lambda_{j,n})\|_{\hdot\times L^2}=0.$$
The solution $U^j$ is called the \emph{nonlinear profile} associated to $U^j_{\lin}$, $\Big\{\lambda_{j,n},t_{j,n}\Big\}_n$. We will use the notation:
\begin{equation}
\label{simplicity2} 
U^j_n(t,x)=\frac{1}{\lambda_{j,n}^{1/2}}U^j\left(\frac{t-t_{j,n}}{\lambda_{j,n}},\frac{x}{\lambda_{j,n}}\right).
\end{equation}

We also recall the following approximation result, consequence of a long time perturbation argument. See the Main Theorem p. 135 in \cite{BaGe99} for the defocusing case, and a sketch of proof right after Proposition 2.8 in \cite{DuKeMe11a}.
\begin{prop}
\label{P:approx}
 Let $\{(u_{0,n},u_{1,n})\}_n$ be a bounded sequence in $\hdot\times L^2$ admitting a profile decomposition  with profiles $\{U^j_{\lin}\}$ and parameters $\{t_{j,n},\lambda_{j,n}\}$. Let $\theta_n\in [0,+\infty)$. Assume 
\begin{equation}
\label{bounded_strichartz}
\forall j\geq 1, \quad\forall n,\;\frac{\theta_n-t_{j,n}}{\lambda_{j,n}}<T_+(U^j)\text{ and } \limsup_{n\rightarrow +\infty} \left\|U^j\right\|_{L^8\big(\big(-\frac{t_{j,n}}{\lambda_{j,n}},\frac{\theta_n-t_{j,n}}{\lambda_{j,n}}\big)\times\RR^3\big)}<\infty.
\end{equation}
Let $u_n$ be the solution of \eqref{CP} with initial data $(u_{0,n},u_{1,n})$.
Then for large $n$, $u_n$ is defined on $[0,\theta_n)$,
\begin{equation}
\label{NL_bound}
\limsup_{n\rightarrow +\infty}\|u_n\|_{L^8\big((0,\theta_n)\times \RR^3\big)}<\infty,
\end{equation} 
and
\begin{equation}
\label{NL_profile} 
\forall t\in [0,\theta_n),\quad
u_n(t,x)=\sum_{j=1}^J U^j_n\left(t,x\right)+w^J_{n}(t,x)+r^J_n(t,x),
 \end{equation}
where  $w_n^J(t)=S(t)\left(w_{0,n}^J,w_{1,n}^J\right)$ and
\begin{equation}
\label{cond_rJn}
\lim_{J\rightarrow +\infty}\left[ \limsup_{n\rightarrow +\infty} \|r^J_n\|_{L^8\big((0,\theta_n)\times\RR^3\big)}+\sup_{t\in (0,\theta_n)} \left(\|\nabla r^J_n(t)\|_{L^2}+\|\partial_t r^J_n(t)\|_{L^2}\right)\right]=0.
\end{equation}
An analoguous statement holds if $\theta_n<0$.
\end{prop}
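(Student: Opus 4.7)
The plan is to build an explicit approximate solution from the nonlinear profiles and the linear remainder, show it satisfies the nonlinear wave equation with a small forcing term, and then invoke a long-time perturbation lemma for \eqref{CP}. Concretely, for each $J$ set
$$ U^J_{\text{app},n}(t,x) := \sum_{j=1}^J U^j_n(t,x) + w^J_n(t,x), \qquad t\in[0,\theta_n),$$
with $U^j_n$ and $w^J_n$ as in \eqref{simplicity2} and the statement. By \eqref{decompo_profil} and the matching property $\vec U^j(-t_{j,n}/\lambda_{j,n})-\vec U^j_{\lin}(-t_{j,n}/\lambda_{j,n})\to 0$ in $\hdot\times L^2$, the Cauchy data of $U^J_{\text{app},n}$ at $t=0$ equal those of $u_n$ up to an $o_n(1)$ error in $\hdot\times L^2$.

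The next step is to compute the residual
$$ e^J_n := (\partial_t^2-\Delta)U^J_{\text{app},n} - (U^J_{\text{app},n})^5 = \sum_{j=1}^J (U^j_n)^5 - \Big(\sum_{j=1}^J U^j_n + w^J_n\Big)^5,$$
which is an algebraic sum of cross terms of the form $\prod_i U^{j_i}_n \cdot (w^J_n)^{5-k}$ with at least two distinct indices or at least one factor $w^J_n$. The key point is to verify that $e^J_n$ vanishes in the appropriate dual-Strichartz norm (say $L^1_tL^2_x$ or $L^{8/5}_{t,x}$ after passing derivatives), first sending $n\to\infty$ and then $J\to\infty$. For a fixed pair $j\neq k$, the pseudo-orthogonality \eqref{ortho_param} of the parameters $(\lambda_{j,n},t_{j,n})$, combined with the change of variables mapping $U^j_n$ to $U^j$, forces any mixed product $U^j_n U^k_n$ to have vanishing $L^4_{t,x}$ norm, by the standard Bahouri--Gérard orthogonality computation applied on the time intervals where $\|U^j\|_{L^8}$ and $\|U^k\|_{L^8}$ are controlled by hypothesis \eqref{bounded_strichartz}. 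Products involving $w^J_n$ are handled using \eqref{small_w} and \eqref{weak_CV_wJ}, which give $\|w^J_n\|_{L^8}\to 0$ as $n\to\infty, J\to\infty$, together with the uniform $\hdot\times L^2$ bound on $w^J_n$ coming from \eqref{pythagore1a}--\eqref{pythagore1b}.

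The same orthogonality computation, together with the Pythagorean expansion, yields
$$\limsup_{n\to\infty}\,\|U^J_{\text{app},n}\|_{L^8((0,\theta_n)\times\RR^3)}\ \lesssim\ \Big(\sum_{j=1}^{J}\|U^j\|_{L^8}^8\Big)^{1/8}+\|w^J_n\|_{L^8},$$
where each summand is controlled by \eqref{bounded_strichartz}, giving a bound independent of $J$ (for $J$ large enough the additional profiles carry small energy, so their $L^8$ norms are controlled by small-data theory). With the approximate solution having uniformly bounded $L^8$ Strichartz norm on $[0,\theta_n)$, with error term $e^J_n\to 0$ in the dual Strichartz norm, and with initial data matching those of $u_n$ modulo $o_n(1)$, the long-time perturbation lemma for the energy-critical wave equation (as in Bahouri--Gérard \cite{BaGe99} for the defocusing case and extended to the focusing setting in the spirit of Kenig--Merle) applies on $[0,\theta_n)$. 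It produces the solution $u_n$ on the full interval, the uniform bound \eqref{NL_bound}, and the expansion \eqref{NL_profile} with remainder $r^J_n=U^J_{\text{app},n}-\sum_{j=1}^JU^j_n-w^J_n+(u_n-U^J_{\text{app},n})$ satisfying \eqref{cond_rJn}. The most delicate point is the rigorous bookkeeping of the cross-term estimates in the pseudo-orthogonality argument, in particular when several profiles share a common time scale but have diverging space scales or time shifts; after that, the perturbation lemma is a black box. The case $\theta_n<0$ follows by applying the same argument to $u_n(-t,x)$.
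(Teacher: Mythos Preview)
Your proposal is correct and follows exactly the approach the paper itself indicates: the paper does not give a detailed proof but simply calls this a ``consequence of a long time perturbation argument'' and cites \cite{BaGe99} and the sketch after Proposition~2.8 in \cite{DuKeMe11a}, which is precisely the strategy you outline (approximate solution from nonlinear profiles plus linear remainder, smallness of cross-terms via pseudo-orthogonality \eqref{ortho_param} and \eqref{small_w}, then long-time perturbation). Your write-up is in fact more detailed than what the paper provides; the only cosmetic point is that your final expression for $r^J_n$ tautologically simplifies to $u_n-\sum_{j=1}^J U^j_n-w^J_n$, which is of course the intended definition.
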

\subsubsection{An orthogonality property}
\begin{claim}
 \label{C:ortho}
 Let $\{(u_{0,n},u_{1,n})\}_n$, $\{U^j_{\lin}\}$, $\{t_{j,n},\lambda_{j,n}\}$ and $\theta_n\in \RR$ satisfy the assumptions of Proposition \ref{P:approx}. Consider sequences $\{\rho_n\}_n$, $\{\sigma_n\}_n$ such that for all $n$, $0\leq \rho_n<\sigma_n$ (the case $\sigma_n=+\infty$ is not excluded). Then:
\begin{gather}
\label{orthojk}
 j\neq k\Longrightarrow \lim_{n\to \infty} \int_{\rho_n\leq |x|\leq \sigma_n} \left(\nabla U^j_n(\theta_n,x)\cdot \nabla U_n^k(\theta_n,x)+\partial_t U^j_n(\theta_n,x)\cdot \partial_t U_n^k(\theta_n,x)\right)\,dx=0\\
\label{ortho_wnJ}
J\geq j\Longrightarrow \lim_{n\to \infty} \int_{\rho_n\leq |x|\leq \sigma_n} \left(\nabla U^j_n(\theta_n,x)\cdot \nabla w_n^j(\theta_n,x)+ \partial_t U^j_n(\theta_n,x)\cdot \partial_t w_n^J(\theta_n,x)\right)\,dx=0.
\end{gather}
\end{claim}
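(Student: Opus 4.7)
Both orthogonality statements are of the standard ``profile decomposition type''; the strategy is a change of variables adapted to one of the profiles, combined with a reduction of the nonlinear profile $\vec U^j(s_{j,n})$ at time $s_{j,n}=(\theta_n-t_{j,n})/\lambda_{j,n}$ to an effectively linear object via the Strichartz bound \eqref{bounded_strichartz} and the scattering theory for \eqref{CP}.

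For \eqref{orthojk}, fix $j\neq k$ and substitute $y=x/\lambda_{j,n}$. With $\mu_n=\lambda_{j,n}/\lambda_{k,n}$ and $s_{k,n}=(\theta_n-t_{k,n})/\lambda_{k,n}$, the integral becomes
\begin{equation*}
I_n=\int_{A_n}\Big[(\nabla U^j)(s_{j,n},y)\cdot G^k_n(y)+(\partial_t U^j)(s_{j,n},y)\cdot g^k_n(y)\Big]\,dy,
\end{equation*}
where $A_n=\{\rho_n/\lambda_{j,n}\le|y|\le\sigma_n/\lambda_{j,n}\}$ and $(G^k_n,g^k_n)(y):=\mu_n^{3/2}(\nabla U^k,\partial_t U^k)(s_{k,n},\mu_n y)$, uniformly bounded in $L^2(\RR^3)\times L^2(\RR^3)$ thanks to \eqref{bounded_strichartz}. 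I would extract a subsequence so that $s_{j,n}$ converges in $\RR\cup\{\pm\infty\}$ and $\indic_{A_n}\to\indic_{A_\infty}$ a.e.\ If the limit $s_j^\ast$ is finite and in the maximal interval of $U^j$, continuity of the flow gives $\vec U^j(s_{j,n})\to\vec U^j(s_j^\ast)$ strongly in $\hdot\times L^2$; if $s_j^\ast=\pm\infty$, the Strichartz finiteness in \eqref{bounded_strichartz} yields scattering of $U^j$, so $\vec U^j(s_{j,n})-\vec V^j_\pm(s_{j,n})\to 0$ in $\hdot\times L^2$ for a free wave $V^j_\pm$. Combining this with $\indic_{A_n}\to \indic_{A_\infty}$ (and dominated convergence) replaces the first factor by a strongly convergent sequence in $L^2\times L^2$ (respectively, by a linear wave which itself disperses on bounded $y$-sets in the divergent case), and the problem reduces to weak nullity of $(G^k_n,g^k_n)$ in $L^2\times L^2$. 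After applying the same scattering reduction to $U^k$, the pseudo-orthogonality \eqref{ortho_param} forces the subsequence into one of three subcases: $\mu_n\to 0$, $\mu_n\to+\infty$, or $\mu_n$ bounded above and below with $|t_{j,n}-t_{k,n}|/\lambda_{j,n}\to+\infty$; each is handled by a direct computation (concentration, spreading, or time-dispersion of the $k$-th profile at scale $\lambda_{j,n}$), giving the desired weak convergence.

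For \eqref{ortho_wnJ}, I would follow the same template, replacing $\vec U^k_n$ by $\vec w^J_n$. Since $w^J_n$ is a free wave, the $\lambda_{j,n}$-rescaled $\vec w^J_n(\theta_n,\lambda_{j,n}\cdot)$ is the $s_{j,n}$-time linear evolution of its rescaled counterpart at $t_{j,n}$, which by \eqref{weak_CV_wJ} is weakly null in $\hdot\times L^2$. Extracting a subsequence so that $s_{j,n}$ has a limit in $\RR\cup\{\pm\infty\}$, weak-to-weak continuity of the linear propagator transfers the weak nullity to time $\theta_n$, and pairing against the $\vec U^j$-reduction produced above concludes the proof.

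The main obstacle is the reduction from the nonlinear profile $\vec U^j(s_{j,n})$ at diverging $s_{j,n}\to\pm\infty$ to an explicit linear wave, which relies crucially on the uniform Strichartz bound in \eqref{bounded_strichartz} together with the scattering theory for \eqref{CP}. Once this reduction is in hand, the rescaled weak-null limits and the annulus restriction are routine profile-decomposition extractions.
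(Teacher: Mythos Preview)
Your reduction to linear profiles (replacing $\vec U^j(s_{j,n})$ by a free wave via continuity or scattering, depending on whether $s_{j,n}=(\theta_n-t_{j,n})/\lambda_{j,n}$ has a finite or infinite limit) is exactly the paper's Step 1, and is correct.

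The gap is in how you handle the annulus restriction once you are reduced to linear waves, in the case $s_{j,n}\to\pm\infty$. Two concrete problems:
\begin{itemize}
\item For \eqref{orthojk} you write that in the divergent case the first factor ``disperses on bounded $y$-sets''. But $A_n=\{\rho_n/\lambda_{j,n}\le|y|\le\sigma_n/\lambda_{j,n}\}$ need not be bounded; if $\sigma_n/\lambda_{j,n}\gg |s_{j,n}|\gg \rho_n/\lambda_{j,n}$, then $A_n$ contains the region $|y|\sim|s_{j,n}|$ where the free wave $V^j_{\lin}(s_{j,n},\cdot)$ carries all its energy. In that regime $\indic_{A_n}\vec V^j_{\lin}(s_{j,n},\cdot)$ is \emph{not} strongly convergent in $L^2\times L^2$, so pairing it against a merely weakly null $(G^k_n,g^k_n)$ does not yield $0$.
\item For \eqref{ortho_wnJ} the assertion that ``weak-to-weak continuity of the linear propagator transfers the weak nullity to time $\theta_n$'' is false when $s_{j,n}\to\pm\infty$: if $f_n\rightharpoonup 0$ and $s_n\to\infty$, then $S(s_n)f_n$ need not be weakly null (take $f_n=S(-s_n)g$ for a fixed nonzero $g$). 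So even granting that the $\lambda_{j,n}$-rescaled $\vec w^J_n(t_{j,n},\lambda_{j,n}\cdot)$ is weakly null by \eqref{weak_CV_wJ}, you cannot conclude the same at time $\theta_n$.
\end{itemize}

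The paper closes this gap by exploiting the radial structure rather than abstract weak-limit arguments. After Step 1 it writes each linear profile as $rV^j_{\lin}(t,r)=f^j(t+r)-f^j(t-r)$ with $\dot f^j\in L^2(\RR)$, integrates by parts on $[\rho_n,\sigma_n]$, and obtains four explicit terms: two one-dimensional ``bulk'' integrals of products $\dot f^j(\cdot)\dot f^k(\cdot)$ and two boundary terms at $r=\rho_n,\sigma_n$. After a density reduction to compactly supported $\dot f^{j},\dot f^{k}$, the bulk integrals vanish by a direct support/scale comparison using \eqref{ortho_param}, and the boundary terms vanish using the pointwise decay $|V^j_{\lin}(t,r)|\le C/(1+r+|t|)$ together with pseudo-orthogonality. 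For \eqref{ortho_wnJ} the paper uses in addition the freedom (via \eqref{orthojk}) to enlarge $J$ so that the rescaled $w^J_n$ at the boundary scales $\rho_n,\sigma_n$ is itself weakly null, killing the boundary terms; the bulk term is handled by a one-dimensional weak-convergence lemma. This is precisely the piece your rescaling-and-weak-limits argument is missing.
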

The proof of Claim \ref{C:ortho} is given in appendix \ref{A:ortho}.

\subsubsection{Localization of a profile} 
The following Lemma is an easy consequence of the strong Huygens principle. We refer to \cite{DuKeMe11a} for the proof.
\begin{lemma}
\label{L:lin_odd}
Let $U^j_{\lin,n}$ be defined by \eqref{simplicity}, and assume
$$\lim_{n\to \infty} \frac{-t_{j,n}}{\lambda_{j,n}}=\ell_j\in [-\infty,+\infty].$$
Then, if $\ell_j=\pm \infty$,
$$  \lim_{R\to \infty} \limsup_{n\to \infty} \int_{\big||x|-|t_{j,n}|\big|\geq R\lambda_{j,n}}  |\nabla U^j_{\lin,n}(0)|^2+\frac{1}{|x|^2}|U^j_{\lin,n}(0)|^2+\lf(\partial_t U^j_{\lin,n}(0)\rg)^2dx=0$$
and if $\ell_j\in \RR$,
$$  \lim_{R\to \infty} \limsup_{n\to \infty} \int_{\substack{\{|x|\geq R \lambda_{j,n}\}\\ \cup\{|x|\leq \frac{1}{R}\lambda_{j,n}\}}} |\nabla U^j_{\lin,n}(0)|^2+\frac{1}{|x|^2}|U^j_{\lin,n}(0)|^2+\lf(\partial_t U^j_{\lin,n}(0)\rg)^2dx=0.$$
\end{lemma}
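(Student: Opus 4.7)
The plan is to first reduce both statements to assertions about the unscaled profile $U^j_{\lin}$ by the change of variables $y = x/\lambda_{j,n}$. Writing $\tau_{j,n} := -t_{j,n}/\lambda_{j,n}$, a direct calculation shows that each of the three terms $|\nabla U^j_{\lin,n}(0,x)|^2$, $|U^j_{\lin,n}(0,x)|^2/|x|^2$ and $(\partial_t U^j_{\lin,n}(0,x))^2$ is scale-invariant in the natural sense, and that the regions of integration transform as
\[
\{||x|-|t_{j,n}||\geq R\lambda_{j,n}\} \leftrightarrow \{||y|-|\tau_{j,n}||\geq R\},\qquad \{|x|\geq R\lambda_{j,n}\}\cup\{|x|\leq \lambda_{j,n}/R\} \leftrightarrow \{|y|\geq R\}\cup\{|y|\leq 1/R\}.
\]
Thus it suffices to prove the asserted vanishing for the unscaled quantity evaluated at time $\tau_{j,n}$, with the $y$-regions above.

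For the case $\ell_j\in\RR$, continuity of the linear flow in $\hdot\times L^2$ gives $\vecc{U}^j_{\lin}(\tau_{j,n})\to\vecc{U}^j_{\lin}(\ell_j)$ strongly, and Hardy's inequality in $\RR^3$ turns this into strong convergence of $U^j_{\lin}(\tau_{j,n})/|y|$ in $L^2$. Since the density $|\nabla U^j_{\lin}(\ell_j)|^2+|U^j_{\lin}(\ell_j)|^2/|y|^2+(\partial_t U^j_{\lin}(\ell_j))^2$ belongs to $L^1(\RR^3)$, its integral over $\{|y|\geq R\}\cup\{|y|\leq 1/R\}$ tends to $0$ as $R\to\infty$ by absolute continuity of the Lebesgue integral. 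Combining this with the above strong convergence gives the conclusion after letting $n\to\infty$ and then $R\to\infty$.

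The essential case $\ell_j=\pm\infty$ rests on the strong Huygens principle in dimension three. Given $\eps>0$, choose $(\phi_0,\phi_1)\in \Cio(\RR^3)\times \Cio(\RR^3)$ with $\|(U^j_0-\phi_0,U^j_1-\phi_1)\|_{\hdot\times L^2}<\eps$, and let $\phi_{\lin}(t)$ be the corresponding linear solution. If $\supp(\phi_0)\cup\supp(\phi_1)\subset\{|y|\leq A\}$, then Kirchhoff's formula yields $\supp\phi_{\lin}(\tau,\cdot)\subset\{||y|-|\tau||\leq A\}$ for $|\tau|>A$. Consequently, for $R>A$ and all sufficiently large $n$,
\[
\int_{||y|-|\tau_{j,n}||\geq R}\left(|\nabla\phi_{\lin}(\tau_{j,n})|^2+\frac{|\phi_{\lin}(\tau_{j,n})|^2}{|y|^2}+(\partial_t\phi_{\lin}(\tau_{j,n}))^2\right)dy=0.
\]
Setting $\psi_{\lin}:=U^j_{\lin}-\phi_{\lin}$, conservation of the free energy gives $\|\nabla\psi_{\lin}(\tau_{j,n})\|_{L^2}^2+\|\partial_t\psi_{\lin}(\tau_{j,n})\|_{L^2}^2\leq\eps^2$ for all $n$, and a further application of Hardy bounds $\|\psi_{\lin}(\tau_{j,n})/|y|\|_{L^2}^2$ by $C\eps^2$ on all of $\RR^3$. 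Hence the corresponding integral for $U^j_{\lin}(\tau_{j,n})$ is at most $C\eps^2$. Letting first $n\to\infty$, then $R\to\infty$, and finally $\eps\to 0$ yields the result.

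The one point requiring care is the Hardy term in the case $\ell_j=\pm\infty$, because the region $\{||y|-|\tau_{j,n}||\geq R\}$ reaches all the way to the origin and one cannot simply bound $1/|y|^2$ by $1/|\tau_{j,n}|^2$ there. The approximation scheme above avoids this issue: Hardy's inequality controls the Hardy term globally by the $\hdot$ norm of the small remainder $\psi_{\lin}$, so no local argument near the origin is needed.
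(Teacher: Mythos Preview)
Your proof is correct and follows precisely the route the paper indicates: the paper does not give a proof here but states that the lemma ``is an easy consequence of the strong Huygens principle'' and refers to \cite{DuKeMe11a}. Your rescaling reduction, the continuity argument for $\ell_j\in\RR$, and the approximation by $\Cio$ data combined with Kirchhoff's sharp support property for $\ell_j=\pm\infty$ are exactly what is meant, and your handling of the Hardy term via the global inequality on the small remainder $\psi_{\lin}$ is the standard and correct way to close the estimate.
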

\subsection{Boundedness along a subsequence}
\label{SS:boundedness}
\begin{prop}
 \label{P:boundedness}
Let $u$ be a solution of \eqref{CP} such that $T_+(u)=+\infty$. Then the energy of $u$ is nonnegative and
$$ \liminf_{t\to +\infty} \|\nabla u(t)\|^2_{L^2}+\|\partial_tu(t)\|^2_{L^2}\leq 3E(u_0,u_1).$$
In particular, there exists a sequence $t_n\to+\infty$ such that $\vec{u}(t_n)$ is bounded in $\hdot\times L^2$.
\end{prop}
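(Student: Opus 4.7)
The plan has two parts. For the nonnegativity $E(u_0, u_1) \geq 0$, one invokes the classical theorem of \cite{Levine74} (see also \cite{KeMe08}): a solution with $E<0$ blows up in finite time in both time directions, contradicting $T_+(u) = +\infty$.

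For the liminf bound, I would argue by contradiction. Suppose there exist $\delta > 0$ and $T_0 \geq 0$ with $\|\vec u(t)\|_{\hdot \times L^2}^2 \geq 3E + \delta$ for all $t \geq T_0$. The algebraic driver is the classical virial computation: for $y(t) := \int u^2(t,x)\,dx$, substituting the energy identity $\int u^6\,dx = 3\|\vec u\|_{\hdot \times L^2}^2 - 6E$ into the second time derivative of $y$ gives
$$y''(t) = 8\|\partial_t u(t)\|_{L^2}^2 + 4\|\nabla u(t)\|_{L^2}^2 - 12E \geq 4\|\vec u(t)\|_{\hdot \times L^2}^2 - 12E \geq 4\delta,$$
so $y$ is strictly convex with (at least) quadratic growth. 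Combined with the Cauchy-Schwarz bound $(y'(t))^2 \leq 4 y(t)\|\partial_t u(t)\|_{L^2}^2$, this yields an inequality of the form $y y'' \geq 2(y')^2 + 4y(\|\nabla u\|^2 - 3E)$; once the solution is localized to have compact support, a Poincaré-type estimate $\|\nabla u\|^2 \gtrsim y/L^2$ upgrades this to Levine's convexity inequality $y y'' \geq (1+\alpha)(y')^2$ for some $\alpha > 0$. The conclusion is that $y^{-\alpha}$ is a positive concave function reaching zero in finite time, hence $y(t) \to +\infty$ in finite time, producing blow-up of $u$ and contradicting $T_+ = +\infty$.

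The main technical difficulty, and the reason finite speed of propagation enters in the spirit of \cite{KeMe08}, is that $u(t) \in \hdot$ need not be in $L^2(\RR^3)$, so $y(t)$ can be infinite. My plan is to truncate at time $T_0$: for $R > 0$ large, set $(\tu_0, \tu_1) := \chi_R \cdot (u(T_0), \partial_t u(T_0))$, where $\chi_R$ is a smooth radial cutoff equal to $1$ on $\{|x| \leq R\}$ and supported in $\{|x| \leq 2R\}$, and let $\tu$ be the solution of \eqref{CP} with this (now $H^1 \times L^2$, compactly supported) initial data. By finite speed of propagation $\tu$ coincides with $u$ on the backward cone $\{(t,x) : |x| < R - (t - T_0)\}$ for $t \in [T_0, T_0 + R]$, so finite-time blow-up of $\tilde y(t) := \int \tu^2\,dx$ forces finite-time blow-up of $u$ in this cone, giving the desired contradiction. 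The key error estimate is that the exterior energy $\int_{|x|>R}(|\nabla u(T_0)|^2 + (\partial_t u(T_0))^2)\,dx$ tends to $0$ as $R \to \infty$ (since $\vec u(T_0) \in \hdot \times L^2$), which suffices to transfer the hypothesis $\|\vec u(T_0)\|^2 \geq 3E(u_0,u_1) + \delta$ to the truncated statement $\|\vec{\tu}(T_0)\|^2 \geq 3E(\tu_0,\tu_1) + \delta/2$, and to control the perturbation of the various terms in the virial computation uniformly on the time interval needed for the blow-up of $\tilde y$. The hardest part will be showing that all of these truncation errors and the Poincaré step remain uniformly controlled long enough for the Levine convexity argument to close.
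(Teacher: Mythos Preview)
Your algebraic setup is the same as the paper's: the identity $y''=8\|\partial_tu\|^2+4\|\nabla u\|^2-12E$ combined with Cauchy--Schwarz on $y'$ to reach a Levine-type inequality $yy''\geq\gamma(y')^2$ with $\gamma>1$. The difference, and the source of a real gap, is your localization strategy.

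You truncate the data at a single time $T_0$ and pass to a \emph{new} solution $\tu$. Two problems arise. First, the contradiction hypothesis $\|\vec u(t)\|^2\geq 3E+\delta$ is known for $u$ and for all $t\geq T_0$; it is not a flow-invariant condition, so you cannot infer it for $\tu$ beyond $t=T_0$. (This is unlike the situation in \cite{KeMe08}, where the trapping condition $\|\nabla u\|>\|\nabla W\|$ is preserved along the flow by the variational structure and therefore transfers to the truncated solution.) Without the hypothesis for $\tu$ at all later times, your lower bound on $\tilde y''$ is not available. Second, even if you could show $\tilde y\to\infty$ in finite time, this only says $\tu$ blows up; since $\tu=u$ only inside the shrinking cone $\{|x|<R-(t-T_0)\}$, the blow-up of $\tu$ can be entirely supported outside this cone and does not contradict $T_+(u)=+\infty$. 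The Poincar\'e step you mention is also problematic (the support of $\tu(t)$ grows linearly in $t$, so the constant degrades), and is in fact unnecessary once the argument is set up correctly.

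The paper avoids all of this by not introducing a new solution. It works with the original $u$ and a \emph{time-dependent} cutoff, setting $y(t)=\int\varphi(x/t)\,u^2(t,x)\,dx$. The key additional ingredient is the exterior decay
\[
\lim_{t\to+\infty}\int_{|x|\geq \frac32 t}\Big(|\nabla u|^2+\frac{u^2}{|x|^2}+u^6+(\partial_t u)^2\Big)\,dx=0,
\]
which follows from finite speed of propagation together with the small data Cauchy theory. This makes all cutoff errors in $y'$ and $y''$ of size $o(t)$ and $o(1)$ respectively as $t\to\infty$, so the contradiction hypothesis on $u$ itself feeds directly into $y''\geq(4+\eps_1)\int(\partial_t u)^2+\eps_1$ and the Levine inequality closes for $u$, yielding the contradiction with $y(t)$ being finite for every $t$.
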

\begin{remark}
Proposition \ref{P:boundedness} also holds in a nonradial context with the same proof.
\end{remark}
\begin{remark}
In \cite{KeMe08}, it was shown that if $\int |\nabla u_0|^2>\int |\nabla W|^2$ and $E(u_0,u_1)<E(W,0)$, then $T_+(u)$ is finite. In this case, the variational characterization of $W$ implies that for all $t$ in the domain of definition of $u$,
\begin{equation}
 \label{Frank_Carlos}
\int |\nabla u(t)|^2>\int |\nabla W|^2=3E(W,0),
\end{equation} 
which, together with the condition $E(u_0,u_1)<E(W,0)$, implies $\|\nabla u(t)\|^2_{L^2}+\|\partial_tu(t)\|^2_{L^2}\geq 3E(u_0,u_1)+\eps$ for some $\eps>0$ independent of $t$. Thus Proposition \ref{P:boundedness} implies the blow-up result of \cite{KeMe08}. The proof of Proposition \ref{P:boundedness} is almost the same as the one in \cite{KeMe08}, which uses an argument going back to H. Levine \cite{Levine74}. We sketch it for the sake of completeness.
\end{remark}
\begin{proof}
We argue by contradiction. Assume that the conclusion of the proposition does not hold. Then there exists $t_0>0$, $\eps_0>0$ such that
\begin{equation}
 \label{C3}
\forall t\geq t_0,\quad  \|\nabla u(t)\|^2_{L^2}+\|\partial_tu(t)\|^2_{L^2}\geq (3 +\eps_0)E(u_0,u_1)+\eps_0.
\end{equation}
Let 
$$y(t)=\int \varphi\lf(\frac{x}{t}\rg)|u(t,x)|^2\,dx.$$
We will show that there exists $\gamma>1$ such that for large $t$,
\begin{equation}
 \label{C9}
y'(t)>0,\text{ and }\gamma y'(t)^2\leq y(t)y''(t).
\end{equation}
This will gives a contradiction by standard ODE arguments. Indeed, \eqref{C9} implies that for large $t$,
$$ \frac{d}{dt} \log\left(\frac{y'(t)}{y^{\gamma}(t)}\right)\geq 0.$$
Thus there exists $c_0>0$ such that for large $t$,
$$ \frac{d}{dt}\left(\frac{1}{y^{\gamma-1}}\right)=(1-\gamma) \frac{y'(t)}{y^{\gamma}(t)}\leq -c_0,$$
which contradicts the fact that $y$ is nonnegative.

It remains to prove \eqref{C9}.
Combining finite speed of propagation, the small data Cauchy theory for \eqref{CP}, Hardy and Sobolev inequalities, we easily get that
\begin{equation}
\label{C2}
 \lim_{t\to +\infty} \int_{|x|\geq \frac{3}{2}t} \left(|\nabla u(t,x)|^2+\frac{1}{|x|^2}|u(t,x)|^2+ |u(t,x)|^6+(\partial_tu(t,x))^2\right)\,dx=0.
\end{equation}
Let $\varphi\in C^{\infty}(\RR^3)$ be a radial function such that $\varphi(r)=1$ if $r\leq 2$, $\varphi(r)=0$ if $r\geq 3$. Then
\begin{equation}
 \label{C4}
\forall t\geq 0,\quad y(t)\geq \int_{|x|\leq 2t} |u(t,x)|^2\,dx.
\end{equation}
Furthermore, 
\begin{equation}
 \label{C4'}
y'(t)=2\int u\partial_t u \varphi\left(\frac{x}{t}\right)\,dx-\frac{1}{t^2}\int u^2 x\cdot\nabla \varphi\lf(\frac{x}{t}\rg)\,dx,
\end{equation}
and thus by \eqref{C2},
\begin{equation}
 \label{C5}
|y'(t)|\leq 2\int_{|x|\leq 2t} |u|\,|\partial_tu|\,dx+o(t)\text{ as }t\to+\infty.
\end{equation}
Differentiating \eqref{C4'} and using equation \eqref{CP}, we get, in view of \eqref{C2},
\begin{align}
\notag
y''(t)&=2\int (\partial_t u)^2\,dx-2\int |\nabla u|^2\,dx+2\int u^6\,dx+o(1)\text{ as }t\to+\infty\\
 \label{C6}
y''(t)&=-12E(u_0,u_1)+8\int(\partial_tu)^2\,dx+4\int |\nabla u|^2\,dx+o(1)\text{ as }t\to+\infty.
\end{align}
By \eqref{C3}, there exists $t_1\geq t_0$ such that for some small $\eps_1>0$,
\begin{equation}
 \label{C7}
\forall t\geq t_1,\quad y''(t)\geq (4+\eps_1)\int (\partial_tu)^2\,dx+\eps_1.
\end{equation}
(Note that if $E(u_0,u_1)<0$, \eqref{C7} follows immediately from \eqref{C6} and we do not need \eqref{C3}. Of course this case was already treated in \cite{Levine74}, \cite{KeMe08}.)

In particular, $\liminf_{t\to+\infty} \frac{1}{t} y'(t)\geq \eps_1$, and \eqref{C5} implies that for large $t$,
\begin{multline}
 \label{C8}
0<y'(t)\leq \left(2+\frac{\eps_1}{100}\right)\int_{|x|\leq 2t} |u|\,|\partial_tu|\,dx\\
\leq \left(2+\frac{\eps_1}{100}\right)\left(\int_{|x|\leq 2t} |u|^2\,dx \right)^{1/2}\left(\int_{|x|\leq 2t} |\partial_tu|^2\,dx\right)^{1/2}.
\end{multline}
Combining \eqref{C4}, \eqref{C7} and \eqref{C8}, we get \eqref{C9}, concluding the proof of Proposition \ref{P:boundedness}.
\end{proof}

\subsection{Existence of the free wave}
\label{SS:free_wave}
\begin{lemma}
\label{L:vL}
 Let $u$ be a radial solution of \eqref{CP} such that $T_+(u)=+\infty$. Then there exists a radial solution $v_{\lin}$ of \eqref{lin_CP} such that
\begin{equation}
 \label{def_vL}
\forall A\in \RR,\quad \lim_{t\to +\infty}\int_{|x|\geq t-A} |\nabla (u-v_{\lin})(t,x)|^2+(\partial_t(u-v_{\lin})(t,x))^2\,dx=0.
\end{equation}
\end{lemma}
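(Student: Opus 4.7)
My candidate for the free wave $v_{\lin}$ will be extracted from the asymptotic behavior of $\vec{u}(t_n)$ along a sequence $t_n\to+\infty$ on which the energy is bounded. By Proposition~\ref{P:boundedness}, such a sequence exists with $\vec{u}(t_n)=(u(t_n),\partial_t u(t_n))$ bounded in $\hdot\times L^2$. Extract a subsequence and apply the Bahouri--G\'erard profile decomposition,
$$\vec{u}(t_n)=\sum_{j\le J}\vec{U}^j_{\lin,n}(0)+\vec{w}^J_n(0)+o_{J,n}(1),$$
with parameters $(\lambda_{j,n},t_{j,n})$ satisfying \eqref{ortho_param}. Exactly one of the profiles can be normalized with $\lambda_{j,n}=1$ and $t_{j,n}=-t_n$, i.e.\ corresponding to the weak limit of $S(-t_n)\vec{u}(t_n)$ in $\hdot\times L^2$; I denote this linear profile by $v_{\lin}$. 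All other profiles either concentrate in scale ($\lambda_{j,n}\to 0$) or have $|t_{j,n}|/\lambda_{j,n}\to\infty$, so by Lemma~\ref{L:lin_odd} they are spatially localized away from the asymptotic cone $\{|x|\sim t\}$.

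To prove \eqref{def_vL} I fix $A\in\RR$ and, for $t\ge t_n$ large, split
$$\vec{u}(t)-\vec{v}_{\lin}(t)=\bigl[\vec{u}(t)-S(t-t_n)\vec{u}(t_n)\bigr]+\bigl[S(t-t_n)\vec{u}(t_n)-\vec{v}_{\lin}(t)\bigr].$$
The first bracket is the Duhamel correction $-\int_{t_n}^tS(t-\tau)(0,u^5(\tau))\,d\tau$. Finite speed of propagation shows that its restriction to $\{|x|>t-A\}$ depends only on $u^5$ in the exterior region $\Omega_n:=\{(\tau,y):t_n\le\tau\le t,\ |y|>\tau-A\}$, and the standard Strichartz inhomogeneous estimate then bounds its $\hdot\times L^2$ norm there by $C\|u\|_{L^8(\Omega_n)}^5$. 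The second bracket is a pure free wave; its energy content on $\{|x|>t-A\}$ can be analyzed via the orthogonality Claim~\ref{C:ortho} together with Lemma~\ref{L:lin_odd}, which together identify it as the contribution of the single non-localized profile, namely $v_{\lin}$ itself, up to a vanishing error.

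The main obstacle is the control of $\|u\|_{L^8(\Omega_n)}$. Indeed, by finite speed of propagation, on $\Omega_n$ the solution $u$ coincides with the nonlinear solution $\widetilde{u}_n$ of \eqref{CP} with cutoff initial data $\Psi_{t_n-A}\vec{u}(t_n)$ at time $t_n$, whose $\hdot\times L^2$ norm is bounded but \emph{not} small, so small-data Cauchy theory does not apply directly. The remedy is to run the Bahouri--G\'erard profile decomposition on the cutoff sequence $\{\Psi_{t_n-A}\vec{u}(t_n)\}_n$, and use Proposition~\ref{P:approx} together with the channel-of-energy estimates of Section~\ref{S:channels}: any profile that would make a nontrivial contribution to the $L^8$ scattering norm on $\Omega_n$ must be either a rescaled stationary profile $\pm\lambda^{-1/2}W(\cdot/\lambda)$ or a generic radial solution, and in either case Proposition~\ref{P:W+compct} or Proposition~\ref{P:channel1} forces a persistent channel of energy in the exterior incompatible with the orthogonal decomposition of $\vec{u}(t_n)$ and the weak convergence of the remainder $\vec{w}^J_n$. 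Hence $\|u\|_{L^8(\Omega_n)}\to 0$, and a standard continuity-in-$t$ argument promotes convergence along $\{t_n\}$ to the full limit \eqref{def_vL}.
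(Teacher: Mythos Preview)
Your proposal has a genuine gap in the last paragraph, and in fact the overall logical structure is circular. You want to control $\|u\|_{L^8(\Omega_n)}$ by arguing that any ``bad'' profile of the cutoff sequence $\Psi_{t_n-A}\vec{u}(t_n)$ would produce a channel of energy via Proposition~\ref{P:channel1} or~\ref{P:W+compct}, and that this channel is ``incompatible'' with the decomposition. But look at how the paper actually uses those channels (Lemma~\ref{L:channel}): in the forward time direction, the contradiction comes precisely from~\eqref{def_vL}, i.e.\ from the existence of $v_{\lin}$, which is what you are trying to construct. The backward channel gives a contradiction with the finiteness of the initial energy, but a given profile may have its channel only in the forward direction, so you cannot close the argument using backward channels alone. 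In short, the channel-of-energy machinery of Section~\ref{S:channels} is deployed in the paper only \emph{after} $v_{\lin}$ has been built.

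There is also a smaller but real error earlier: you assert that all profiles other than $v_{\lin}$ ``either concentrate in scale ($\lambda_{j,n}\to 0$) or have $|t_{j,n}|/\lambda_{j,n}\to\infty$.'' This misses the self-similar profile with $t_{j,n}=0$ and $\lambda_{j,n}\sim t_n$, which by Lemma~\ref{L:lin_odd} is localized exactly near $|x|\sim t_n$, not away from it. The paper handles this profile explicitly (see \eqref{support_tV} and the cutoff $\varphi_{\delta'}$), and it is the whole reason the argument is delicate.

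The paper's route avoids both problems with the preliminary Lemma~\ref{L:vL_prel}. Instead of cutting off at $|x|\ge t_n-A$, one cuts off with $\varphi_\delta(x/t_n)$, supported in a thin shell near $|x|=t_n$. This makes the self-similar profile small (energy $\le\eps$), kills all profiles with $t_{j,n}=0$, $\lambda_{j,n}\ll t_n$, and leaves only profiles with $|t_{j,n}|/\lambda_{j,n}\to\infty$ located near the light cone. A further forward evolution by $s_n/2$ converts backward-scattering profiles into forward-scattering ones. The upshot is a sequence of data whose nonlinear evolution scatters forward \emph{by Proposition~\ref{P:approx} alone}, with no appeal to channels of energy. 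Finite speed of propagation then gives a free wave $v_{\lin}^A$ matching $u$ on $\{|x|\ge t-A\}$, and a short weak-limit argument identifies all the $v_{\lin}^A$ with a single $v_{\lin}$.
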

We first prove a preliminary result. Let $\{\varphi_{\delta}\}_{\delta}$ be a family of radial $C^{\infty}$ functions on $\RR^3$, defined for $\delta>0$ small and such that
\begin{equation}
 \label{phi_delta}
0\leq \varphi_{\delta}\leq 1,\quad |\nabla \varphi_{\delta}|\leq \frac{C}{\delta},\quad |x|\geq 1-\delta\Longrightarrow \varphi_{\delta}(x)=1,\text{ and } |x|\leq 1-2\delta\Longrightarrow \varphi_{\delta}(x)=0.
\end{equation}
\begin{lemma}
 \label{L:vL_prel}
Let $u$ be a solution of \eqref{CP} such that $T_+(u)=+\infty$, and $\eps$ be a small positive number. Then there exists $t_n\to+\infty$ and a small $\delta>0$ such that $\varphi_{\delta}\left(\frac{x}{t_n}\right)\vec{u}(t_n)$ has a profile decomposition with profiles $\lf\{U_{\lin}^j\rg\}_j$ and parameters $\lf\{\lambda_{j,n},t_{j,n}\rg\}_{j,n}$ such that
\begin{gather}
\label{Profile_vL1}
 \forall j\geq 2,\quad \lim_{n\to +\infty} \frac{-t_{j,n}}{\lambda_{j,n}}=+\infty,\\
\label{Profile_vL2}
t_{1,n}=0\quad \text{and}\quad \|(U_0^1,U_1^1)\|_{\hdot\times L^2}\leq \eps.
\end{gather}
\end{lemma}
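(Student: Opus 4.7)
\medskip
\noindent\textit{Proof plan.}
The plan is to apply Bahouri--G\'erard to the cut-off sequence $\varphi_\delta(\cdot/t_n)\vec u(t_n)$, declare the first profile to be trivial, and verify that the remaining profiles are ``outgoing'' in the sense $-t_{j,n}/\lambda_{j,n}\to+\infty$.

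First, use Proposition \ref{P:boundedness} to pick a subsequence $t_n\to+\infty$ along which $\vec u(t_n)$ is bounded in $\hdot\times L^2$. For any $\delta>0$ the cut-off sequence $\varphi_\delta(\cdot/t_n)\vec u(t_n)$ is also bounded, so after a further extraction it admits a profile decomposition with profiles $\{U^j_{\lin}\}_j$ and parameters $\{(\lambda_{j,n},t_{j,n})\}_{j,n}$ normalized as in \eqref{choice_param}.

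Second, take the first profile to be trivial by setting $U^1_{\lin}\equiv 0$, $t_{1,n}=0$, $\lambda_{1,n}=1$. This is consistent because the weak limit of $\varphi_\delta(\cdot/t_n)\vec u(t_n)$ in $\hdot\times L^2$ vanishes: for any compactly supported test pair $(\phi_0,\phi_1)$ we have $\langle\varphi_\delta(\cdot/t_n)\vec u(t_n),(\phi_0,\phi_1)\rangle=0$ for all $n$ large, since $\varphi_\delta(\cdot/t_n)$ vanishes on any fixed compact set for $n$ large, and density of compactly supported functions in $\hdot\times L^2$ concludes. The bound $\|(U^1_0,U^1_1)\|_{\hdot\times L^2}\leq \eps$ is then automatic.

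Third, for each nontrivial profile ($j\geq 2$) we verify $-t_{j,n}/\lambda_{j,n}\to+\infty$. The normalization \eqref{choice_param} leaves two alternatives to rule out: (i) $t_{j,n}=0$, and (ii) $-t_{j,n}/\lambda_{j,n}\to-\infty$. In case (i), Lemma \ref{L:lin_odd} localizes the energy of $U^j_{\lin,n}(0)$ in the annulus $\{\lambda_{j,n}/R\leq|x|\leq R\lambda_{j,n}\}$; combined with the support of the data in $\{|x|\geq(1-2\delta)t_n\}$ this forces $\lambda_{j,n}\gtrsim t_n$. Invoking the $3$D radial d'Alembert representation $rU^j_{\lin}(\tau,r)=F_j(r-\tau)+G_j(r+\tau)$ one splits the profile into an outgoing part (from $F_j$) and an incoming part (from $G_j$); the outgoing part is re-expressed as a time-shifted profile with $-t_{j,n}/\lambda_{j,n}\to+\infty$ by reparameterization, while the incoming part falls under case (ii). In case (ii), Lemma \ref{L:lin_odd} localizes the energy on a thin shell $\{\,||x|-|t_{j,n}||\lesssim \lambda_{j,n}\}$ with $|t_{j,n}|\gtrsim t_n$; at $\tau=-t_{j,n}/\lambda_{j,n}\to-\infty$ and $r\sim|\tau|$ one has $r-\tau\sim 2|\tau|\to+\infty$ so $F_j(r-\tau)\to 0$ in $L^2$, and only the incoming piece $G_j(r+\tau)\sim G_j(0)$ survives. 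Such an incoming spherical wave at time $t_n$ would, under the nonlinear evolution for $t>t_n$, focus near the origin and re-emerge; running Proposition \ref{P:approx} together with the channel-of-energy dispersive estimates from Section \ref{S:channels} then produces a nontrivial outgoing signature in $\vec u(t)$ for $t>t_n$, contradicting the boundedness from Step 1 (along a subsequence chosen by a diagonal argument) and showing that case (ii) cannot occur.

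\medskip
\noindent\textit{Main obstacle.}
The hardest step is the rigorous exclusion of incoming profiles (case (ii)). This requires combining the $3$D radial structure (strong Huygens principle in d'Alembert form) with the nonlinear propagation result Proposition \ref{P:approx} to track the putative incoming profile forward in time. A secondary technical point is the pseudo-orthogonality bookkeeping (via \eqref{ortho_param} and Claim \ref{C:ortho}) needed when re-expressing the $t_{j,n}=0$ profiles of case (i) as time-shifted ones, so as to preserve the validity of the Pythagorean expansions \eqref{pythagore1a}--\eqref{pythagore2}.
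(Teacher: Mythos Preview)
Your proposal has genuine gaps in both case (i) and case (ii), and misses the key mechanism that the paper uses.

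\emph{Case (i).} A profile with $t_{j,n}=0$ and $\lambda_{j,n}\sim t_n$ cannot be ``split via d'Alembert and reparameterized'' into two Bahouri--G\'erard profiles with $-t'_{j,n}/\lambda'_{j,n}\to\pm\infty$. The outgoing and incoming d'Alembert pieces share the \emph{same} parameters $(\lambda_{j,n},0)$, so they are not pseudo-orthogonal in the sense of \eqref{ortho_param}; they are simply the two halves of one profile, and there is no legitimate relabeling that converts a centered profile into a time-shifted one while keeping the decomposition valid. The paper does not attempt this. Instead it observes that this profile (the unique one at scale $\sim s_n$, called $\tV^1$) has, by \eqref{FSP}, initial data supported in $\{|x|\le 1\}$ after rescaling; multiplying by $\varphi_{\delta'}$ then leaves only an $O(\delta')$-sized remnant, which is declared the first profile and satisfies \eqref{Profile_vL2sn}. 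Your choice $U^1_{\lin}\equiv 0$ discards this mechanism and forces the self-similar profile into case (i), where your argument breaks.

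\emph{Case (ii).} The claim that an incoming profile would ``focus, re-emerge, and contradict boundedness'' is not a valid contradiction. An energy channel---outgoing or incoming---is perfectly compatible with $\vec u(t_n)$ being bounded along the subsequence; nothing in Section~\ref{S:channels} produces unboundedness. The diagonal argument you sketch is circular: to rule out incoming profiles at $t_n$ you would need to know there are none at later times $t_m$, which is what you are trying to prove.

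The paper's device, which replaces both of your arguments, is a \emph{forward-evolution step}. After the first cutoff at time $s_n$ (Step~1), the only surviving nontrivial profiles have $|\tau_j|:=\lim|s_{j,n}|/s_n\in[1-2\delta',1]$; the incoming ones have $\tau_j\in[-1,2\delta'-1]$. One then applies Proposition~\ref{P:approx} to evolve the cut-off data forward by $s_n/2$ (Step~2). By finite speed of propagation the result agrees with $\vec u(t_n)$, $t_n=\tfrac32 s_n$, on $\{|x|\ge(1-2\delta)t_n\}$ with $\delta=\delta'/3$. After this time shift, an incoming profile (with $s_{j,n}\sim(-\tau_j)s_n\sim s_n$) concentrates near $|x|\sim|s_{j,n}-s_n/2|\sim s_n/2\sim t_n/3$, well inside the region where $\varphi_\delta(\cdot/t_n)$ vanishes, and is removed by the second cutoff. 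The outgoing profiles survive with $-t_{j,n}^{\mathrm{new}}/\lambda_{j,n}\to+\infty$. This is the missing idea in your proposal.
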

\begin{proof}
The proof is very close to \cite[Proof of Lemma 3.8]{DuKeMe11P}. We recall it for the sake of completeness. We divide the proof in two steps.

\EMPH{Step 1}
In this step we show that there exist  $\delta'>0$ and a sequence $s_n\to +\infty$ such that
$\left\{\varphi_{\delta'}\left(\frac{x}{s_n}\right)\vec{u}(s_n)\right\}_n$ has a profile decomposition with profiles $\lf\{V_{\lin}^j\rg\}_j$ and parameters $\lf\{\mu_{j,n},s_{j,n}\rg\}_{j,n}$ satisfying
\begin{gather}
\label{Profile_vL1sn}
 \forall j\geq 2,\quad \lim_{n\to +\infty} \frac{-s_{j,n}}{\mu_{j,n}}\in\{\pm \infty\}\text{ and }\lim_{n\to \infty}\frac{-s_{j,n}}{s_n}\in [-1,2\delta'-1]\cup [1-2\delta',1],\\
\label{Profile_vL2sn}
s_{1,n}=0\quad\text{and}\quad \|(V_0^1,V_1^1)\|_{\hdot\times L^2}\leq \frac{\eps}{2}.
\end{gather}

First note that by finite speed of propagation and small data theory,
\begin{equation}
\label{FSP}
 \lim_{R\to+\infty} \limsup_{t\to+\infty} \int_{|x|\geq t+R} |\nabla u|^2+(\partial_tu)^2\,dx=0.
\end{equation}

By Proposition \ref{P:boundedness}, there exists a sequence $s_n\to+\infty$ such that $\|\vec{u}(s_n)\|_{\hdot\times L^2}$ is bounded.  After extraction of a subsequence in $n$, we know from \cite{BaGe99} that  $\left\{\vec{u}(s_n)\right\}_n$ has a profile decomposition with profiles $\{\tV^j_{\lin}\}_j$ and parameters  $\lf\{\mu_{j,n},s_{j,n}\rg\}_{j,n}$. By \eqref{FSP} and Lemma \ref{L:lin_odd}, for all $j$,
\begin{gather}
 \label{sjn_sn}
\lim_{n\to +\infty} \frac{|s_{j,n}|}{s_n}\leq 1\\
 \label{mujn_sn}
\lim_{n\to +\infty} \frac{\mu_{j,n}}{s_n}<\infty.
\end{gather}
(as usual, extracting subsequences, we can always assume that these limits exist).

If $\lim_{n\to \infty}\frac{\mu_{j,n}}{s_n}>0$ then we cannot have $\lim_{n\to+\infty}\frac{|s_{j,n}|}{\mu_{j,n}}=+\infty$ which would contradict \eqref{sjn_sn}. Thus we can assume $s_{j,n}=0$ for all $n$. Using the pseudo-orthogonality of the parameters, we deduce that there is at most one index $j$ such that $\lim_{n\to \infty}\frac{\mu_{j,n}}{s_n}>0$. We will assume that this index is $j=1$, and that $\mu_{1,n}=s_n$ for all $n$. By \eqref{FSP},
\begin{equation}
\label{support_tV}
 \supp (\tV^1_0,\tV^1_1)\subset \{|x|\leq 1\}.
\end{equation}
Then
$$\varphi_{\delta'}\left(\frac{x}{s_n}\right)\left(\frac{1}{s_n^{1/2}}\tV_0^1\left(\frac{x}{s_n}\right),\frac{1}{s_n^{3/2}}\tV_1^1\left(\frac{x}{s_n}\right)\right)=\left(\frac{1}{s_n^{1/2}}V_0^1\left(\frac{x}{s_n}\right),\frac{1}{s_n^{3/2}}V_1^1\left(\frac{x}{s_n}\right)\right),$$
where $\left(V_0^1,V^1_1\right)=\varphi_{\delta'}(x)\left(\tV_0^1,\tV_1^1\right)$. Using \eqref{phi_delta} and \eqref{support_tV}, one can easily show that \eqref{Profile_vL2sn} is satisfied for small $\delta'>0$.

Let $j\geq 2$, and distinguish two cases: 
\begin{itemize}
 \item If $s_{j,n}=0$ for all $n$, by quasi-orthogonality, $\lim_{n\to+\infty} \frac{\mu_{j,n}}{s_n}=0$, which shows by Lemma \ref{L:lin_odd},
\begin{equation}
 \label{nul_profile}
\lim_{n\to +\infty} \left\| \varphi_{\delta'}\left(\frac{x}{s_n}\right)\vecc{\tV}^j_{\lin,n}(0)\right\|_{\hdot\times L^2}=0
\end{equation}
 \item If $\lim_{n\to +\infty} \frac{s_{j,n}}{\mu_{j,n}}=\pm\infty$, then, denoting by
$$ \tau_j=\lim_{n\to\infty} \frac{-s_{j,n}}{s_n} \in [-1,+1],$$
we have, by Lemma \ref{L:lin_odd},
\begin{equation}
 \label{CV_profile}
\lim_{n\to\infty} \left\|\varphi_{\delta'}\left(\frac{x}{s_n}\right)\vecc{\tV}_{\lin,n}^j(0)-\varphi_{\delta'}\left(|\tau_j|\right)\vecc{\tV}_{\lin,n}^j(0)\right\|_{\hdot\times L^2}=0.
\end{equation}
In particular, if $|\tau_j|\leq 1-\delta'$, $\varphi_{\delta'}\left(\frac{x}{s_n}\right)\vecc{\tV}_{\lin,n}^j(0)$ goes to $0$ in $\hdot\times L^2$ as $n$ tends to infinity. 
\end{itemize}
 
We have:
\begin{equation}
\label{cut_profile}
 \varphi_{\delta'}\left(\frac{x}{s_n}\right)(u(s_n),\partial_tu(s_n))=\sum_{j=1}^J \varphi_{\delta'}\left(\frac{x}{s_n}\right)\vecc{\tV}^j_{\lin,n}(0)+\varphi_{\delta'}\left(\frac{x}{s_n}\right)\left(w_{0,n}^J,w_{1,n}^J\right),
\end{equation}
where
$$ \lim_{J\to\infty}\limsup_{n\to+\infty}\left\|S(t)\left(w_{0,n}^J,w_{1,n}^J\right)\right\|_{L^8(\RR^4)}=0.$$
By \cite[Claim 2.11]{DuKeMe11a},
\begin{equation}
\label{dispersionOK}
\lim_{J\to\infty}\limsup_{n\to+\infty}\left\|S(t)\left[ \varphi_{\delta'}\left(\frac{x}{s_n}\right) \left(w_{0,n}^J,w_{1,n}^J\right)\right]\right\|_{L^8(\RR^4)}=0.
\end{equation}
Combining \eqref{nul_profile}, \eqref{CV_profile}, \eqref{cut_profile} and \eqref{dispersionOK} we get that $\left\{\varphi_{\delta'}\left(\frac{x}{s_n}\right)\vec{u}(s_n)\right\}_n$ has a profile decomposition satisfying \eqref{Profile_vL1sn} and \eqref{Profile_vL2sn}, which concludes Step 1.

\EMPH{Step 2}

Let $u_n$ be the solution of \eqref{CP} with initial data $\varphi_{\delta'}\left(\frac{x}{s_n}\right)\vec{u}(s_n)$. Then by Proposition \ref{P:approx}, $u_n$ is defined on $[0,s_n/2]$ and
$$ \vec{u}_n(s_n/2)=\sum_{j=1}^J \vecc{V}_{n}^j(s_n/2)+\vecc{w}_n^J(s_n/2),$$
where 
$$V_n^j(t,x)=\frac{1}{\mu_{j,n}^{1/2}}V^j\left(\frac{t-s_{j,n}}{\mu_{j,n}},\frac{x}{\mu_{j,n}}\right),$$
and $V^j$ are the nonlinear profiles associated to the profiles $V^j_{\lin,n}$ defined in Step 1.

Let $t_n=\frac{3}{2}s_n$ and $\delta=\frac{\delta'}{3}$. By finite speed of propagation and the definition of $\varphi_{\delta'}$,
$$ |x|\geq  \left(\frac{3}{2}-\delta'\right)s_n=(1-2\delta) t_n\Longrightarrow \vec{u}_n(s_n/2,x)=\vec{u}(t_n,x).$$
Thus
$$\varphi_{\delta}\left(\frac{x}{t_n}\right)\vec{u}(t_n)=\varphi_{\delta}\left(\frac{x}{t_n}\right)\vec{u}_n(s_n/2)=\sum_{j=1}^J \varphi_{\delta}\left(\frac{x}{t_n}\right)\vecc{V}_{n}^j(s_n/2)+\varphi_{\delta}\left(\frac{x}{t_n}\right)\vecc{w}_n^J(s_n/2),$$
and the conclusion of the lemma follows from a similar analysis to the one at the end of Step 1.
\end{proof}
\begin{proof}[Proof of Lemma \ref{L:vL}]
\EMPH{Step 1}
In this step we show that for all $A\in \RR$, there exists a radial solution $v_{\lin}^A$ to the linear equation \eqref{lin_CP} such that
\begin{equation}
\label{vLA}
 \lim_{t\to+\infty} \int_{|x|\geq t-A} \left|\nabla(u-v_{\lin}^A)(t,x)\right|^2+\left(\partial_t(u-v_{\lin}^A)(t,x)\right)^2\,dx=0.
\end{equation}
Again, the proof is close to the one in \cite{DuKeMe11P}.
Consider the sequence $t_n$ given by Lemma \ref{L:vL_prel}, and let $u_n$ be the solution of \eqref{CP} with initial data $\varphi_{\delta}(x/t_n)\vec{u}(t_n,x)$ at $t=0$. It follows from \eqref{Profile_vL1}, \eqref{Profile_vL2} and Proposition \ref{P:approx} that for large $n$, $u_n$ is globally defined and scatters for positive times. We fix a large $n$ and let $\tilde{v}_{\lin,n}$ be the solution of the linear equation \eqref{lin_CP} such that
$$ \lim_{t\to+\infty}\|\vec{u}_n(t)-\vecc{\tilde{v}}_{\lin,n}(t)\|_{\hdot\times L^2}=0.$$
By finite speed of propagation, $\vec{u}(t_n+t,x)=\vec{u}_n(t,x)$ for $|x|\geq (1-\delta)t_n+t$, $t\geq 0$. Hence
$$ \lim_{t\to+\infty} \int_{|x|\geq  -\delta t_n+t} \left(\left|\nabla u(t,x)-\nabla \tilde{v}_{\lin}(t-t_n,x)\right|^2+\left|\partial_t u(t,x)-\partial_t\tilde{v}_{\lin}(t-t_n,x)\right|^2\right)\,dx=0.$$
Chosing $n$ large, so that $\delta t_n\geq A$, we get \eqref{vLA} with $v_{\lin}^A(t,x)= \tilde{v}_{\lin}(t-t_n,x)$, concluding this step.

\EMPH{Step 2: end of the proof}
Consider the sequence $\{t_n\}_n$ given by Proposition \ref{P:boundedness}, and assume,
after extraction of a subsequence in $n$, that $S(-t_n)\vec{u}(t_n)$ has a weak limit $(v_{0,\lin},v_{1,\lin})$, as $n$ tends to infinity,  in $\hdot\times L^2$. Furthermore, extracting again, we can assume that the sequence $\vec{u}(t_n)$ has a profile decomposition
\begin{equation}
 \label{C31}
\vec{u}(t_n)=\vec{v}_{\lin}(t_n)+\sum_{j=2}^J \vecc{U}_{\lin,n}^j(0) +(w_{0,n}^J,w_{1,n}^J).
\end{equation}
Note that in this decomposition, we have chosen the first profile as $U^1_{\lin}=v_{\lin}$, with parameters $\lambda_{1,n}=1$, $t_{1,n}=-t_n$, which is consistent with the definition of profiles as weak limits, see \eqref{weak_CV_Uj}.

Let $A\in \RR$ and $v_{\lin}^A$ be the linear solution given by Step 1. Then $\vec{u}(t_n)-\vecc{v}_{\lin}^A(t_n)$ has the following profile decomposition:
$$\vec{u}(t_n)-\vecc{v}_{\lin}^A(t_n)=\vec{v}_{\lin}(t_n)-\vecc{v}_{\lin}^A(t_n)+\sum_{j=2}^J \vecc{U}_{\lin,n}^j(0) +(w_{0,n}^J,w_{1,n}^J),$$
where the first profile is $\tU^1_{\lin}=v_{\lin}-v_{\lin}^A$, and the corresponding parameters are again $\lambda_{1,n}=1$ and $t_{1,n}=-t_n$. By Claim \ref{C:ortho}, we get that \eqref{vLA} implies
\begin{equation*}
\lim_{n\to\infty} \int_{|x|\geq t_n-A}\left( |\nabla (v_{\lin}^A-v_{\lin})(t_n,x)|^2+(\partial_t (v_{\lin}^A-v_{\lin})(t_n,x))^2\right)\,dx=0.
\end{equation*}
Using that $v_{\lin}^A-v_{\lin}$ is a solution to the linear wave equation, the decay of the free energy of $v_{\lin}^A-v_{\lin}$ outside the lightcone $\big\{|x|\geq t-A\big\}$ implies:
\begin{equation*}
\lim_{t\to+\infty} \int_{|x|\geq t-A}\left( |\nabla (v_{\lin}^A-v_{\lin})(t,x)|^2+(\partial_t (v_{\lin}^A-v_{\lin})(t,x))^2\right)\,dx=0,
\end{equation*}
which, together with \eqref{vLA}, yields  \eqref{def_vL}.
\end{proof}

\subsection{Analysis along a sequence of times}
\label{SS:sequences}
In this subsection, we show:
\begin{prop}
\label{P:sequence}
 Let $t_n\to +\infty$ be such that $\left\{\vec{u}(t_n)\right\}_n$ is bounded in $\hdot\times L^2$, and $v_{\lin}$ be the linear solution given by Lemma \ref{L:vL}. Then, after extraction of a subsequence in $n$, there exist $J\geq 0$, $\iota_1,\ldots,\iota_J\in\{\pm 1\}$ and sequences $\{\lambda_{j,n}\}_n$ with $0<\lambda_{1,n}\ll \ldots \ll \lambda_{J,n}\ll t_n$ such that
\begin{equation}
\label{dev_sequence}
\vec{u}(t_n)-\vec{v}_{\lin}(t_n)-\sum_{j=1}^J \left(\frac{\iota_j}{\lambda_{j,n}^{1/2}}W\left(\frac{x}{\lambda_{j,n}},0\right),0\right) \underset{n\to +\infty}{\longrightarrow} 0
\end{equation} 
in $\hdot\times L^2$. 
\end{prop}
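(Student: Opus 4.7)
The approach is to extract a profile decomposition of $\vec h_n:=\vec u(t_n)-\vec v_{\lin}(t_n)$, localize the profile parameters inside the light cone using Lemma \ref{L:vL}, and then invoke the channel-of-energy theorems from Section \ref{S:channels} to force each nonlinear profile to be $0$ or a rescaled $\pm W$. I would start by noting that $\{\vec h_n\}_n$ is bounded in $\hdot\times L^2$, so after passing to a subsequence it admits a Bahouri--G\'erard profile decomposition $\vec h_n=\sum_{j=1}^J\vec U^j_{\lin,n}(0)+\vec w_n^J$ with pseudo-orthogonal parameters $(\lambda_{j,n},t_{j,n})$ normalized by \eqref{choice_param}. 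Lemma \ref{L:vL} gives
\[
\lim_{n\to\infty}\int_{|x|\ge t_n-A}\bigl(|\nabla h_n|^2+|\partial_t h_n|^2\bigr)\,dx=0\quad\text{for every }A\in\RR,
\]
which, combined with Claim \ref{C:ortho} and the Huygens localization of Lemma \ref{L:lin_odd}, forces every profile to concentrate strictly inside the cone $\{|x|\le t_n-A\}$. In particular $\lambda_{j,n}=o(t_n)$ for every $j$, and when $|t_{j,n}|/\lambda_{j,n}\to\infty$ also $|t_{j,n}|\le t_n-o(t_n)$.

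The crucial step is then the rigidity claim: every nonlinear profile $U^j$ is either trivial or of the form $\iota_j\lambda^{-1/2}W(\cdot/\lambda)$. I would argue by contradiction. Suppose some $U^{j_0}$ is neither trivial nor a rescaled $\pm W$. If $U^{j_0}$ is not a compactly supported perturbation of any rescaled $\pm W$, Proposition \ref{P:channel1} yields constants $R,\eta>0$ and a globally defined, scattering solution $\tilde U^{j_0}$ agreeing with $U^{j_0}$ on $\{|x|>R\}$ and with a one-sided channel of energy of size $\eta$. If $U^{j_0}$ is a compact perturbation of a rescaled $\pm W$, I would first use case \ref{I:small_supp} of Proposition \ref{P:W+compct} to propagate the support past the threshold $R_0$, then apply case \ref{I:big_supp} to obtain an analogous scattering $\tilde U^{j_0}$ with channel. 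Applying the backward-time version of Proposition \ref{P:approx}, valid precisely because $\tilde U^{j_0}$ scatters in both time directions, at time $t_n+s_n$ with $s_n\sim -t_n/2$, the rescaled channel $\{|x|\ge R\lambda_{j_0,n}+|s_n-t_{j_0,n}|\}$ lies inside the exterior $\{|x|\ge(t_n+s_n)-A\}$. By orthogonality of scales, the contribution of $\tilde U^{j_0}_n$ to the exterior energy of $\vec u(t_n+s_n)-\vec v_{\lin}(t_n+s_n)$ is at least $\eta$, while Lemma \ref{L:vL} at time $t_n+s_n\to+\infty$ forces this exterior energy to vanish, yielding the contradiction.

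Once every nonzero nonlinear profile is identified as $\iota_j\lambda_{j,n}^{-1/2}W(\cdot/\lambda_{j,n})$, profiles with $|t_{j,n}|/\lambda_{j,n}\to\infty$ must be trivial (a rescaled $W$ is stationary and cannot arise from a nontrivial wave operator), and the pseudo-orthogonality \eqref{ortho_param} combined with $\lambda_{j,n}=o(t_n)$ permits a reordering so that $0<\lambda_{1,n}\ll\cdots\ll\lambda_{J,n}\ll t_n$. The strong convergence $\vec w_n^J\to 0$ in $\hdot\times L^2$ then follows from the Strichartz smallness of $w_n^J$ together with its weak vanishing inherent in the profile decomposition. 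I expect the main obstacle to be the rigidity argument: one must control the nonlinear evolution over a time interval of length $\sim t_n$ in profile variables, which is why the globally scattering modification $\tilde U^{j_0}$ replacing $U^{j_0}$ is essential, and simultaneously guarantee that the contributions of all \emph{other} profiles to the exterior region $\{|x|\gtrsim t_n\}$ are negligible. To handle the case where some other profile is not yet known to be trivial or $\pm W$, I would proceed by finite induction, at each step peeling off the ``bad'' profile with the largest scale.
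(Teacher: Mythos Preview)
Your broad strategy matches the paper's, but there are two genuine gaps.

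First, the claim that $\vec w_n^J\to 0$ strongly in $\hdot\times L^2$ ``follows from the Strichartz smallness of $w_n^J$ together with its weak vanishing'' is false: a bounded sequence with vanishing $L^8_{t,x}$ norm and weakly vanishing data need not converge strongly in $\hdot\times L^2$. The paper treats a nonvanishing remainder as its own case (Case~1 of the proof): Lemma~\ref{L:lin} supplies a one-sided channel for $w_n$ itself, and the same contradiction machinery (Lemma~\ref{L:channel}, condition~\eqref{B12'}) is then invoked.

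Second, and more seriously, the rigidity step as you describe it cannot be carried out. To transport the channel of $\tilde U^{j_0}$ you want to apply Proposition~\ref{P:approx} on an interval of length $|s_n|\sim t_n/2$. But this requires \emph{every} nonlinear profile to have bounded $L^8$ norm on the corresponding interval (hypothesis~\eqref{bounded_strichartz}). The other nonlinear profiles $U^j$ are a priori completely unknown: some may blow up, and any that are rescaled $W$'s have $t_{j,n}=0$, $\lambda_{j,n}\ll t_n$, hence $\|W\|_{L^8((0,|s_n|/\lambda_{j,n})\times\RR^3)}=\infty$. Replacing only $U^{j_0}$ by a scattering $\tilde U^{j_0}$ does not remove these obstructions, and your proposed induction---peeling off the bad profile with the largest scale---does not either, since already-identified $W$ profiles persist and block Proposition~\ref{P:approx} at every subsequent step. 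The paper resolves this by first replacing \emph{all} non-$W$ profiles by scattering ones outside a suitable radius $\rho_n$ (Lemma~\ref{L:channel_profile}, with the subcase analysis 2a--2c for compact perturbations of $W$), and then, in Lemma~\ref{L:channel}, inducting on the number $J_0$ of $W$ profiles: one advances by a time $\lambda_{1,n}T$ tied to the \emph{smallest} $W$ scale, which is negligible for the larger $W$'s, and truncates the smallest $W$ into a scattering profile via $\Psi_T$, reducing $J_0$ by one. Only at $J_0=0$, when all profiles scatter, can one push the evolution by order $t_n$ and obtain the contradiction---with Lemma~\ref{L:vL} when the channel points forward, and with $(u_0,u_1)\in\hdot\times L^2$ at $t=0$ when it points backward (you only describe the backward direction).
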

Let us emphasize the difference between Proposition \ref{P:sequence} and Theorem 4 of \cite{DuKeMe11P}. Theorem 4 of  \cite{DuKeMe11P} states that if $\vec{u}$ is bounded in $\hdot\times L^2$, there exists a sequence $t_n\to +\infty$ such that \eqref{dev_sequence} holds, whereas in Proposition \ref{P:sequence}, the sequence $t_n\to+\infty$ can be chosen as a subsequence of any sequence $\{t_n'\}_n$ such that $\vec{u}(t_n')$ is bounded. This apparently small difference allows us to prove that the expansion \eqref{expansion} holds for all large time, and not only along a sequence of times as in \cite{DuKeMe11P}. 

Let us quickly explain the proof of Proposition \ref{P:sequence}. Arguing by contradiction, we expand $\vec{u}(t_n)$ into profiles and assume for example that one of the nonzero profiles is not equal to $\pm W$. Using the results of Section \ref{S:channels}, we show that this profile will send an energy channel into the future (which contradicts Lemma \ref{L:vL}) or into the past (giving an initial data with infinite energy, a contradiction). This \emph{channel of energy} method was already used in our previous articles \cite{DuKeMe11a}, \cite{DuKeMe10P} and \cite{DuKeMe11P}. However, in these articles, we could only show that small solutions of \eqref{CP} (and also, in \cite{DuKeMe11P}, compactly supported solutions) have an appropriate energy channel property, whereas Section \ref{S:channels} shows that this property holds in some sense for any nonstationary radial solution of \eqref{CP}. 

Before proving Proposition \ref{P:sequence}, we will need two technical lemmas. Lemma \ref{L:channel_profile} gives a ``profile'' version of the results of Section \ref{S:channels}. Lemma \ref{L:channel} makes explicit the energy channel argument.
\begin{lemma}
\label{L:channel_profile}
 Consider a non-zero profile 
$$ U_{\lin,n}^j(t,x)=\frac{1}{\lambda_{j,n}^{1/2}} U^j_{\lin}\left(\frac{t-t_{j,n}}{\lambda_{j,n}},\frac{x}{\lambda_{j,n}}\right), \quad U_{\lin}^j(t)=S(t)(U^j_0,U^j_1).$$
and assume that
\begin{equation}
 \label{infinite_lim}
 \lim_{n\to \infty}\frac{-t_{j,n}}{\lambda_{j,n}}\in \{\pm\infty\}
\end{equation}
or that $t_{j,n}=0$ for all $n$ and that one of the following holds:
\begin{enumerate}
\item for all $\mu>0$, for both signs $+$ or $-$, $\left(U_0^j\pm \frac{1}{\mu^{1/2}}W\left(\frac{\cdot}{\mu}\right),U^j_1\right)$ is not compactly supported, or
\item there exists a sign $+$ or $-$ such that $\left(U_0^j\pm W,U^j_1\right)$ is compactly supported and 
$$\rho\left(U_0^j\pm W,U^j_1\right)>R_0,$$
where $\rho$ is defined in \eqref{def_rho} and the constant $R_0>0$ is given by Proposition \ref{P:W+compct}.
\end{enumerate}
Then there exists a solution $\tU_{\lin}^j$ of the linear wave equation, and a sequence $\left\{\rho_{j,n}\right\}_n$ of positive numbers such that the nonlinear profile $\tU^j$ associated to $\tU_{\lin}^j$, $\{t_{j,n},\lambda_{j,n}\}_n$ is globally defined and scatters in both time directions, 
\begin{equation}
 \label{B18'}
\forall n, \quad |x|> \rho_{j,n}\Longrightarrow \vecc{\tU}^j_{\lin,n}(0,x)=\vecc{U}^j_{\lin,n}(0,x)\\
\end{equation}
and there exists $\eta_j>0$ such that the following holds for all $t\geq 0$ or for all $t\leq 0$
\begin{equation}
 \label{B18''}
\forall n,\quad \int_{|x|>\rho_{j,n}+|t|} \left|\nabla \tU_n^j(t,x)\right|^2+\left|\partial_t \tU_n^j(t,x)\right|^2\,dx \geq \eta_j.
\end{equation}
\end{lemma}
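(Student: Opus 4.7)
The plan is to treat the three hypotheses of the lemma separately, in each case extracting $\tU_{\lin}^j$ from a global, scattering solution of \eqref{CP} produced either by the main results of Section \ref{S:channels} (when $t_{j,n}=0$) or by a small-data truncation (when $-t_{j,n}/\lambda_{j,n}\to\pm\infty$).

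For the case $t_{j,n}=0$ with the non-compact-support hypothesis on $(U_0^j,U_1^j)$, I would apply Proposition \ref{P:channel1} to the nonlinear profile $U^j$; for the case $t_{j,n}=0$ with $(U^j_0\pm W,U^j_1)$ compactly supported and $\rho>R_0$, I would instead invoke part \ref{I:big_supp} of Proposition \ref{P:W+compct}. Each of these produces $R>0$, $\eta>0$ and a global radial solution $\tu$ of \eqref{CP}, scattering in both directions, such that $(\tu_0,\tu_1)=(U^j_0,U^j_1)$ for $r>R$ and $\int_{|x|>R+|t|}(|\nabla \tu|^2+(\partial_t\tu)^2)\,dx\geq \eta$ for all $t\geq 0$ or for all $t\leq 0$. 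Setting $\tU_{\lin}^j(t):=S(t)(\tu_0,\tu_1)$ and $\rho_{j,n}:=R\lambda_{j,n}$, the associated nonlinear profile is $\tu$ itself, because $t_{j,n}=0$ forces $\vecc{\tU}^j(0)=\vecc{\tU}_{\lin}^j(0)$; the conclusions \eqref{B18'} and \eqref{B18''} then follow by the scaling $(t,x)\mapsto(t/\lambda_{j,n},x/\lambda_{j,n})$.

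For the translated case $-t_{j,n}/\lambda_{j,n}\to\ell_j\in\{\pm\infty\}$, I would take $\tU_{\lin}^j$ to be the linear wave whose initial data at $s=0$ is $\Psi_R(U^j_0,U^j_1)$, with $R$ chosen large enough that $\|\Psi_R(U^j_0,U^j_1)\|_{\hdot\times L^2}<\delta_0$, the small-data threshold from Lemma \ref{L:CPCarlos} applied with $V=0$. The associated nonlinear profile $\tU^j$ is then globally defined and scatters in both time directions, by small-data Cauchy theory together with the construction of wave operators at $\ell_j$. Finite speed of propagation for the linear wave gives $\tU_{\lin}^j(s,y)=U_{\lin}^j(s,y)$ for $|y|>R+|s|$, so $\rho_{j,n}:=R\lambda_{j,n}+|t_{j,n}|$ satisfies \eqref{B18'} after rescaling. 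Lemma \ref{L:lin} applied to $\tU_{\lin}^j$ provides $\int_{|y|>R+|s|}(|\nabla\tU_{\lin}^j|^2+(\partial_s\tU_{\lin}^j)^2)\,dy\geq\eta$ for $s$ in one of the two time directions, and the smallness of $\tU_{\lin}^j$ propagates the inequality to $\tU^j$; a direct computation shows that, in the direction matching $\ell_j$, the identity $|t_{j,n}|+|t|=|t-t_{j,n}|$ makes the small-scale cone $|y|>R+|s|$ coincide exactly, via $s=(t-t_{j,n})/\lambda_{j,n}$, with the big-scale cone $|x|>\rho_{j,n}+|t|$, giving \eqref{B18''}.

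The principal technical point is ensuring that the time direction supplied by Lemma \ref{L:lin} for the truncated datum $\Psi_R(U^j_0,U^j_1)$ is compatible with the sign of $\ell_j$: in the one-dimensional reduction $v=ru$, the d'Alembert decomposition $v(s,r)=F(r+s)+G(r-s)$ makes explicit that the profile's contribution at $s\to\ell_j=\pm\infty$ is carried by the outgoing (respectively incoming) characteristic $G$ (respectively $F$), so the truncated datum is dominated by $\beta=2G'$ (respectively $\alpha=2F'$), which by the estimate underlying Lemma \ref{L:lin} forces the channel to hold in the $s\geq 0$ (respectively $s\leq 0$) direction; this is exactly the direction compatible with $t\geq 0$ (respectively $t\leq 0$) in \eqref{B18''} once one uses that $t_{j,n}$ has opposite sign to $\ell_j$ for large $n$. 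Once this matching is verified, the rest is bookkeeping in rescaling.
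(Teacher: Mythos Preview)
Your treatment of the case $t_{j,n}=0$ is correct and coincides with the paper's argument: Proposition~\ref{P:channel1} or Proposition~\ref{P:W+compct}~\eqref{I:big_supp} yields the scattering solution $\tU^j$, and rescaling by $\lambda_{j,n}$ gives \eqref{B18'} and \eqref{B18''} with $\rho_{j,n}=R\lambda_{j,n}$.

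The translated case, however, has a genuine gap. You take $\tU^j_{\lin}$ with data $\Psi_R(U^j_0,U^j_1)$ at $s=0$ and appeal to Lemma~\ref{L:lin}, which only guarantees a channel in \emph{one} of the two time directions. Your justification that this direction must match the sign of $\ell_j$ is not valid: in the d'Alembert reduction $rU^j_{\lin}(s,r)=F(r+s)+G(r-s)$, the channel at $s\to+\infty$ is controlled by $\int_R^\infty G'^2$ and at $s\to-\infty$ by $\int_R^\infty F'^2$, and Lemma~\ref{L:lin} selects whichever is larger. But $(U^j_0,U^j_1)$ is arbitrary nonzero data; the fact that $-t_{j,n}/\lambda_{j,n}\to\ell_j$ is a property of the \emph{parameters}, not of the data, and imposes no relation between $F'$ and $G'$. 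If the channel comes out in the direction opposite to $\ell_j$, the identity $|t_{j,n}|+|t|=|t-t_{j,n}|$ fails (one has instead a strict inequality), so the cone $\{|y|>R+|s|\}$ does not transform into $\{|x|>\rho_{j,n}+|t|\}$, and \eqref{B18''} cannot be recovered.

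The paper avoids this by not using Lemma~\ref{L:lin} at all in the translated case. Instead it proves a separate result (Claim~\ref{C:easy_channel}): given any nonzero radial linear wave $u_{\lin}$ with one-dimensional profile $f$, one computes directly from the formula
\[
\int_{|x|\geq \rho_0+t}\bigl(|\nabla u_{\lin}|^2+(\partial_t u_{\lin})^2\bigr)\,dx
= 2\int_{\rho_0+2t}^\infty \dot f^2 + 2\int_{-\infty}^{-\rho_0}\dot f^2 + \text{(boundary)},
\]
and then \emph{chooses} $\rho_0\in\RR$ so that $2\int_{-\infty}^{-\rho_0}\dot f^2=\eps>0$. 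This forces the channel in the \emph{forward} direction for all $t\geq t_0$, with energy bounded above by $2\eps$. One then takes $\tU^j_{\lin}$ with data $\Psi_{\rho_0+t_0}\vec u_{\lin}(t_0)$ at $s=t_0$, constructs $\tU^j$ by the wave operator at $+\infty$, and sets $\rho_{j,n}=\rho_j\lambda_{j,n}-t_{j,n}$. The point is that the explicit formula lets one \emph{prescribe} the direction rather than accept whichever one Lemma~\ref{L:lin} happens to give.
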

We postpone the proof of Lemma \ref{L:channel_profile} to Appendix \ref{A:channel_profile}.

\begin{lemma}
 \label{L:channel}
There exists no sequence $\{t_n\}_n\to +\infty$ with the following properties.\\ 
There exists a sequence of functions $\{(u_{0,n},u_{1,n})\}_n$, bounded in $\hdot\times L^2$, and a sequence $\{\rho_n\}_n$ of nonnegative numbers such that
\begin{equation}
 \label{B9}
|x|\geq \rho_n\Longrightarrow (u(t_n,x),\partial_tu(t_n,x))=(u_{0,n}(x),u_{1,n}(x)),
\end{equation} 
and there exists $J_0\in \NN$, $\iota_1,\ldots,\iota_{J}\in \{\pm 1\}$ such that $(u_{0,n},u_{1,n})$ has a profile decomposition of the following form:
\begin{equation}
\label{B10}
(u_{0,n},u_{1,n})=\vec{v}_{\lin}(t_n)+\sum_{j=1}^{J_0} \left(\frac{\iota_j}{\lambda_{j,n}^{1/2}}W\left(\frac{x}{\lambda_{j,n}}\right),0\right)+\sum_{j=J_0+1}^J \vecc{U}_{\lin,n}^j(0)+(w_{0,n}^J,w_{1,n}^J), 
\end{equation} 
where for all $j\geq J_0+1$, the nonlinear profile $U^j$ is globally defined and scatters in both time directions. Furthermore, there exists $\eps_0>0$ such that one of the following holds:
\begin{enumerate}
 \item \label{I:B12} there exists $j_0\geq J_0+1$ such that for all $t\geq 0$ or for all $t\leq 0$:
\begin{equation}
\label{B12}
\forall n \quad \int_{|x|\geq \rho_n+|t|} |\nabla U_n^{j_0}(t,x)|^2+(\partial_t U_n^{j_0}(t,x))^2\,dx \geq \eps_0
\end{equation} 
or
\item \label{I:B12'} for at least one sign $+$ or $-$,
\begin{equation}
 \label{B12'} \lim_{J\to+\infty}\,\liminf_{n\to+\infty}\,  \inf_{\pm t\geq 0} \int_{|x|\geq \rho_n+|t|} |\nabla w_n^{J}(t,x)|^2+(\partial_t w_n^{J}(t,x))^2\,dx \geq \eps_0.
\end{equation} 
\end{enumerate}
\end{lemma}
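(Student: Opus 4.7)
The strategy is contradiction. Assume such a sequence $\{t_n\}$, data $(u_{0,n},u_{1,n})$, and radii $\rho_n$ exist, and let $u_n$ denote the solution of \eqref{CP} with initial data $(u_{0,n},u_{1,n})$. By \eqref{B9} and finite speed of propagation,
\[
u_n(t,x)=u(t_n+t,x)\qquad\text{whenever }|x|\geq \rho_n+|t|,
\]
as long as both sides are defined. The plan is to transfer the assumed energy channel (for $U^{j_0}$ in case~\eqref{I:B12} or for $w_n^J$ in case~\eqref{I:B12'}) to $u_n(t)$, hence by finite speed to $u(t_n+t)$, and then derive a contradiction from either $(u_0,u_1)\in\hdot\times L^2$ when the channel is in the backward direction, or from Lemma~\ref{L:vL} when it is in the forward direction.

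The transfer is carried out by applying Proposition~\ref{P:approx} to the nonlinear profile decomposition associated with~\eqref{B10}, treating $v_{\lin}(t_n+\cdot)$ as a free-wave profile at scale~$1$ with time shift~$-t_n$, each $\iota_j\lambda_{j,n}^{-1/2}W(\cdot/\lambda_{j,n})$ as a stationary nonlinear profile, and each $U^j$ with $j\geq J_0+1$ as a global nonlinear profile that scatters in both time directions. Combining the resulting approximation with the pseudo-orthogonality of Claim~\ref{C:ortho}, applied over the exterior region $|x|\geq\rho_n+|t|$ at time $t$, yields
\[
\int_{|x|\geq\rho_n+|t|}|\nabla u_n(t)|^2+(\partial_tu_n(t))^2\,dx\;\geq\;\tfrac{\eps_0}{2}
\]
for all $t$ of the prescribed sign, $n$ large, and (in case~\eqref{I:B12'}) $J$ large, since cross terms vanish by orthogonality, all other profile contributions are non-negative, and the distinguished profile provides the channel lower bound.

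If the channel is in the backward direction, I set $t=-t_n$; finite speed then gives $u_n(-t_n,x)=u_0(x)$ and $\partial_tu_n(-t_n,x)=u_1(x)$ on $|x|\geq\rho_n+t_n$, so the above becomes
\[
\int_{|x|\geq\rho_n+t_n}|\nabla u_0|^2+u_1^2\,dx\;\geq\;\tfrac{\eps_0}{2},
\]
contradicting $(u_0,u_1)\in\hdot\times L^2$ since $\rho_n+t_n\to+\infty$. If the channel is in the forward direction, I repeat the orthogonality argument with $u_n(t)-v_{\lin}(t_n+t)$ in place of $u_n(t)$ (removing the $v_{\lin}$ profile from the decomposition), then use finite speed to get, for all $t\geq 0$ and $n$ large,
\[
\int_{|x|\geq\rho_n+t}|\nabla(u-v_{\lin})(t_n+t)|^2+(\partial_t(u-v_{\lin})(t_n+t))^2\,dx\;\geq\;\tfrac{\eps_0}{2}.
\]
A standard diagonal extraction from Lemma~\ref{L:vL}, based on the decay $\int_{|x|\geq s-A}|\nabla(u-v_{\lin})(s)|^2+\ldots\to 0$ as $s\to+\infty$ for each fixed $A$, produces $A_n\geq t_n-\rho_n$ and $T_n\to+\infty$ such that the right-hand side above vanishes when $t=T_n$, contradicting the lower bound $\eps_0/2$.

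The principal technical obstacle is applying Proposition~\ref{P:approx} in the presence of the stationary soliton profiles, whose Strichartz $L^8$ norms on intervals of length $\sim t_n$ or $\sim T_n$ may blow up as $n\to+\infty$. As in~\cite{DuKeMe11P}, this is handled by exploiting that each $\iota_j\lambda_{j,n}^{-1/2}W(\cdot/\lambda_{j,n})$ is an exact solution of~\eqref{CP}, so only the cross-interaction terms need perturbative control; those are made small on the exterior annulus $|x|\geq\rho_n+|t|$ using the pseudo-orthogonality~\eqref{ortho_param} of scales and the concentration of the solitons at scale $\lambda_{j,n}$. Verifying that Claim~\ref{C:ortho} still applies, with $v_{\lin}(t_n+\cdot)$ and the soliton blocks included as profiles, is the second delicate point, but follows the same weak-convergence scheme as in the standard setting.
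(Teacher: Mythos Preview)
Your overall strategy---forward channel contradicts Lemma~\ref{L:vL}, backward channel contradicts $(u_0,u_1)\in\hdot\times L^2$, both via finite speed of propagation and the orthogonality of Claim~\ref{C:ortho}---is exactly what the paper does in the base case $J_0=0$. The gap is in how you handle $J_0\geq 1$.

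You correctly identify the obstruction: to reach time $-t_n$ (or any $T_n\to+\infty$) via Proposition~\ref{P:approx}, condition~\eqref{bounded_strichartz} must hold, but for a soliton profile with $\lambda_{j,n}\ll t_n$ one has $\|W\|_{L^8((-t_n/\lambda_{j,n},0)\times\RR^3)}\to\infty$. Your proposed fix---``only the cross-interaction terms need perturbative control; those are made small on the exterior annulus''---does not close. Proposition~\ref{P:approx} is not a localized statement: without global Strichartz control you do not even know that $u_n$ exists on $[-t_n,0]$, and the long-time perturbation argument behind~\eqref{NL_profile} bounds cross terms like $W^4U^k_n$ in $L^{4/3}_tL^{4/3}_x$ using the $L^8$ norm of the solitons, which is exactly what blows up. Localizing to $|x|\geq\rho_n+|t|$ does not help, because the fixed-point/Duhamel structure of the approximation is global in $x$. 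The reference to~\cite{DuKeMe11P} does not rescue this: that paper uses the same device the present paper uses, not a localized perturbation argument.

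The paper's actual mechanism is an induction on $J_0$. The case $J_0=0$ is precisely your argument and is clean because all remaining profiles scatter, so Proposition~\ref{P:approx} applies on $[-t_n,+\infty)$. For the inductive step, one orders the soliton scales $\lambda_{1,n}\ll\cdots\ll\lambda_{J_0,n}$ and evolves only on $[0,\lambda_{1,n}T]$ for a fixed large $T$. On this short interval every profile, including each soliton, has bounded $L^8$ norm, so Proposition~\ref{P:approx} applies. At time $\lambda_{1,n}T$ one replaces the smallest-scale soliton by its truncation $(\tU_0^1,0)=\Psi_T(W,0)$, which coincides with $W$ for $|x|>\lambda_{1,n}T$ (hence agrees with $u$ on $|x|\geq\rho_n+\lambda_{1,n}T$ by finite speed) but has small $\hdot\times L^2$ norm and therefore scatters. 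This yields a new configuration, at the shifted times $\tilde t_n=t_n+\lambda_{1,n}T$ and radii $\tilde\rho_n=\rho_n+\lambda_{1,n}T$, with only $J_0-1$ soliton profiles and with the channel property~\eqref{B12} or~\eqref{B12'} intact (shifted in time). The induction hypothesis then gives the contradiction. This ``peel off one soliton at a time'' trick is the missing idea in your argument.
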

\begin{proof}
We first note that for any $j\in \{1,\ldots, J_0\}$, 
\begin{equation}
\label{lambdajn_negligible}	
 \lim_{n\to +\infty} \frac{\lambda_{j,n}}{t_{n}}=0.
\end{equation} 
This follows from \eqref{FSP}, the fact that $W$ is not compactly supported, and the formula
\begin{equation*}
 \left(\lambda_{j,n}^{1/2}u(t_n,\lambda_{j,n}\cdot),\lambda_{j,n}^{3/2}\partial_t u(t_n,\lambda_{j,n}\cdot)\right)\xrightharpoonup[n\to+\infty]{}  (\iota_j W,0)
\end{equation*} 

We denote by $v$ the solution of \eqref{CP} such that 
\begin{equation}
\label{v_vL}
\lim_{t\to +\infty} \|\vec{v}(t)-\vec{v}_{\lin}(t)\|_{\hdot\times L^2}=0. 
\end{equation} 
Translating $u$ in time if necessary, we will assume that $v$ is defined on $[0,+\infty)$.

We will prove the result by induction on $J_0$. 

\EMPH{Case $J_0=0$}

Let $u_n$ be the solution of \eqref{CP} with data $(u_{0,n},u_{1,n})$. By Proposition \ref{P:approx}, $(u_{0,n},u_{1,n})$ is defined on $[-t_n,+\infty)$ for large $n$ and 
\begin{equation}
 \label{B13} \vec{u}_n(t,x)=\vec{v}(t_n+t,x)+\sum_{j=1}^J\vecc{U}_n^j(t,x)+\vecc{w}_n^J(t,x)+\vecc{r}_n^J(t,x),
\end{equation} 
where
\begin{equation}
 \label{B14}
\lim_{J\to +\infty} \limsup_{n\to +\infty} \sup_{t\in [-t_n,+\infty)} \left\|\vecc{r}_n^J\right\|_{\hdot\times L^2}=0.
\end{equation} 
First assume that \eqref{B12} holds for all $t\geq 0$ or that \eqref{B12'} holds with a $+$ sign. Then by \eqref{v_vL}, \eqref{B13}, \eqref{B14},  \eqref{B12} (or \eqref{B12'}) and the orthogonality Claim \ref{C:ortho}, the following holds for all large $n$ and all $t\geq 0$:
\begin{equation}
\label{B15}
\int_{|x|\geq \rho_n+t} \left(\left|\nabla u_n(t,x)-\nabla v_{\lin}(t_n+t,x)\right|^2+(\partial_t u_n(t,x)-\partial_t v_{\lin}(t_n+t,x))^2\right)\,dx\geq \frac{\eps_0}{2}.
 \end{equation} 
By finite speed of propagation and \eqref{B9}, we deduce that for large $n$,
\begin{equation}
\label{B16}
\int_{|x|\geq \rho_n+t} \left(\left|\nabla u(t_n+t,x)-\nabla v_{\lin}(t_n+t,x)\right|^2+(\partial_t u(t_n+t,x)-\partial_t v_{\lin}(t_n+t,x))^2\right)\,dx\geq \frac{\eps_0}{2},
 \end{equation} 
and thus, 
\begin{equation}
\liminf_{t\to+\infty} \int_{|x|\geq \rho_n-t_n+t} \left(\left|\nabla u(t,x)-\nabla v_{\lin}(t,x)\right|^2+(\partial_t u(t,x)-\partial_t v_{\lin}(t,x))^2\right)\,dx>0,
\end{equation}
contradicting \eqref{def_vL}.

Next, we assume that \eqref{B12} holds for all $t\leq 0$, or that \eqref{B12'} holds with a $-$ sign. By \eqref{B13} at $t=-t_n$, \eqref{B14}, \eqref{B12} (or \eqref{B12'}) and the orthogonality Claim \ref{C:ortho}, we get that for large $n$ 
$$\int_{|x|\geq \rho_n+t_n} |\nabla u_n(-t_n,x)-\nabla v(0,x)|^2+(\partial_tu(-t_n,x)-\partial_t v(0,x))^2\,dx\geq \frac{\eps_0}{2}.$$
Using again \eqref{B9} and finite speed of propagation, we deduce that for large $n$,
$$\int_{|x|\geq \rho_n+t_n} \left(|\nabla u_0(x)-\nabla v(0,x)|^2+(u_1(x)-\partial_t v(0,x))^2\right)\,dx\geq \frac{\eps_0}{2}.$$
Letting $n\to+\infty$, we get again a contradiction.

\EMPH{Inductive step}
This part of the proof is close to \cite[Proof of Lemma 4.5]{DuKeMe11P}.
Fix $J_1\geq 0$, and assume that the lemma holds when $J_0\leq J_1$. Consider a sequence $t_n\to+\infty$ satisfying the assumptions of the lemma with $J_0=J_1+1$. We assume to fix ideas that \eqref{B12} or \eqref{B12'} holds for all $t\geq 0$. The proof is the same in the other case. Reordering the profiles (and extracting a subsequence if necessary), we may assume
$$ \lambda_{1,n}\ll \lambda_{2,n}\ll \ldots \ll\lambda_{J_0,n}\ll t_n.$$
Let $T>0$ be a large time. Using that $W$ is globally defined, we get by Proposition \ref{P:approx} and the fact that the nonlinear profiles $U^j$ scatter for $j\geq J_0+1$,
\begin{multline}
\label{B17}
 \vec{u}_n(\lambda_{1,n}T)\\
=\vec{v}_{\lin}(t_n+\lambda_{1,n}T)+\sum_{j=1}^{J_0} \left(\frac{\iota_j}{\lambda_{j,n}^{1/2}}W\left(\frac{x}{\lambda_{j,n}}\right),0\right)+\sum_{j=J_0+1}^J \vecc{U}_n^j(\lambda_{1,n}T)+\vecc{w}_n^{J}(\lambda_{1,n}T) +\vecc{r_n}^{J}(\lambda_{1,n}T),
\end{multline} 
where $\lim_{J\to\infty}\limsup_{n\to \infty} \left\|\vecc{r}^J_n\left(\lambda_{1,n}T\right)\right\|_{\hdot\times L^2}=0$. 

Let $\left(\tU_0^1,0\right)=\Psi_T(W,0)$, where $\Psi_T$ is defined in the beginning of Subsection \ref{SS:preliminaries}. Chosing $T$ large, we can assume that the solution $\tU$ with initial data $(\tU_0^1,0)$ is globally defined and scatters in both time directions. Let
\begin{multline}
\label{B18}
(\tu_{0,n},\tu_{1,n})=\vec{v}_{\lin}(t_n+\lambda_{1,n}T)+\left(\frac{\iota_1}{\lambda_{1,n}^{1/2}}\tU_0^1\left(\frac{x}{\lambda_{1,n}} \right),0\right)
\\+\sum_{j=2}^{J_0} \left(\frac{\iota_j}{\lambda_{j,n}^{1/2}}W\left(\frac{x}{\lambda_{j,n}},0\right)\right)+\sum_{j=J_0+1}^{J} \vecc{U}_n^j(\lambda_{1,n}T)+\vecc{w}_n^J(\lambda_{1,n}T)+\vecc{r}_n^J(\lambda_{1,n}T). 
\end{multline}
We check that the sequences $\tilde{t}_n=t_n+\lambda_{1,n}T$, $\trho_n=\rho_n+\lambda_{1,n}T$ and $\{(\tu_{0,n},\tu_{1,n})\}_n$ satisfy the assumptions of Lemma \ref{L:channel} with $J_0-1$ instead of $J_0$. 

By finite speed of propagation,
$$ (\tu_{0,n},\tu_{1,n})=\vecc{\tu}_n(\lambda_{1,n}T,x)=\vecc{\tu}\left(\ttt_n,x\right)\quad\text{for } |x|\geq \rho_n+\lambda_{1,n}T=\trho_n.$$
The expansion \eqref{B18} yields a profile decomposition of $(\tu_{0,n},\tu_{1,n})$:
\begin{multline*}
 (\tu_{0,n},\tu_{1,n})=\vec{v}_{\lin}(\ttt_n)+\sum_{j=2}^{J_0} \left(\frac{\iota_j}{\lambda_{j,n}^{1/2}}W\left(\frac{x}{\lambda_{j,n}}\right),0\right)+\left(\frac{\iota_1}{\lambda_{1,n}^{1/2}}\tU_0^1\left(\frac{x}{\lambda_{1,n}} \right),0\right)\\
+\sum_{j=J_0+1}^J \left(\frac{1}{\lambda_{j,n}^{1/2}}U^j\left(\frac{-\ttt_{j,n}}{\lambda_{j,n}},\frac{x}{\lambda_{j,n}}\right),\frac{1}{\lambda_{j,n}^{3/2}}\partial_t U^j\left(\frac{-\ttt_{j,n}}{\lambda_{j,n}},\frac{x}{\lambda_{j,n}}\right)\right)+\left(\tw_{0,n}^{J},\tw_{1,n}^J\right),
\end{multline*} 
where $\ttt_{j,n}=-\lambda_{1,n}T+t_{j,n}$ (note that this preserves the pseudo-orthogonality of the sequence of parameters $\{\lambda_{j,n}\}_n$, $\{\ttt_{j,n}\}_n$) and
$$(\tw_{0,n}^J,\tw_{1,n}^J)=\vec{w}_n(\lambda_{1,n}T)+\vec{r}_n^J(\lambda_{1,n}T).$$
By the small data theory, the solution $\tU$ of \eqref{CP} with initial data $(\iota_1\tU_0^1,0)$ is globally defined and scatters in both time directions.
Finally, if \eqref{B12} holds then
$$ \forall t\geq 0,\quad \int_{|x|\geq \rho_n+\lambda_{1,n}T+t}\left|\nabla U_n^{j_0}(\lambda_{1,n}T+t,x)\right|^2+\left|\partial_t U_n^{j_0}(\lambda_{1,n}T+t,x)\right|^2\,dx\geq \eps_0.$$
Letting $\tU_n^{j}=\frac{1}{\lambda_{j,n}^{1/2}}U^j\left(\frac{t-\ttt_{j,n}}{\lambda_{j,n}},\frac{x}{\lambda_{j,n}}\right)=U_n^j(t+\lambda_{1,n}t,x)$, we obtain
$$ \forall t\geq 0,\quad \int_{|x|\geq \trho_n+t}\left|\nabla \tU_n^{j_0}(t,x)\right|^2+\left|\partial_t \tU_n^{j_0}(t,x)\right|^2\,dx\geq \eps_0.$$
Similarly, if \eqref{B12'} holds we get:
$$ \lim_{J\to +\infty}\,\liminf_{n\to \infty}\,\inf_{t\geq 0}\int_{|x|\geq \trho_{n}+t} |\nabla \tw_n^{J}(t,x)|^2+(\partial_t \tw_n^{J}(t,x))^2\,dx\geq \eps_0.$$
We are reduced to $J_0-1$ profiles $W$, which closes the induction argument.
\end{proof}

We are now in position to prove Proposition \ref{P:sequence}. We argue by contradiction. If the conclusion of the proposition does not hold, the exists a subsequence of $\{t_n\}_n$ (still denoted by $\{t_n\}_n$) such that $\vec{u}(t_n)$ has a profile decomposition of the following form:
\begin{equation}
 \label{D1}
\vec{u}(t_n)=\vec{v}_{\lin}(t_n)+\sum_{j=1}^{J_0} \left(\frac{\iota_j}{\lambda_{j,n}^{1/2}}W\left(\frac{x}{\lambda_{j,n}}\right),0\right)+\sum_{j=J_0+1}^{J}\vecc{U}_{\lin,n}^j(0)+\left(w_{0,n}^J,w_{1,n}^J\right),
\end{equation} 
where $J_0\geq 0$, $\iota_j\in\{\pm 1\}$ and, for $j\geq J_0+1$, one of the following holds
\begin{equation}
 \label{D2}
\lim_{n\to\infty} \frac{-t_{j,n}}{\lambda_{j,n}}\in \{\pm \infty\}
\end{equation} 
or
\begin{equation}
 \label{D3}
\forall j\geq J_0+1,\; t_{j,n}=0\quad\text{and}\quad \forall \lambda>0,\; \left(U_{0,\lin}^j,U_{1,\lin}^j\right)\neq \left(\pm\frac{1}{\lambda^{1/2}}W\left(\frac{x}{\lambda}\right),0\right),
\end{equation}
Furthermore, one of the following holds:
\begin{equation}
 \label{D4}
U^{J_0+1}_{\lin}\neq 0
\end{equation} 
or
\begin{equation}
 \label{D5}
\forall j\geq J_0+1,\; U^{j}=0\quad\text{and}\quad \liminf_{n\to\infty} \left\|(w_{0,n}^{J_0},w_{1,n}^{J_0})\right\|_{\hdot\times L^2}>0.
\end{equation} 
We split the proof in various cases. In each case, using in particular Lemma \ref{L:channel_profile}, we reduce to the situation where $\vec{u}(t_n)$ coincides for $|x|>\rho_n$ (for some nonnegative parameter $\rho_n$), with a sum of rescaled $W$ and of globally defined profiles creating energy channels in the cone $\{|x|> \rho_n+|t|\}$. Lemma \ref{L:channel} will then yield a contradiction. This argument can be performed directly along the sequence $\{t_n\}_n$ (see cases 1, 2a and 2b below) unless the profile $U^j$, $j\geq J_0+1$ which is ``further'' from the origin $x=0$, is of the form $\left(W+h_0^j,h_1^j\right)$, where $\rho(h_0^j,h_1^j)$ is small. In this case, we will use case \eqref{I:small_supp} in Proposition \ref{P:W+compct}, finite speed of propagation and Proposition \ref{P:approx} to get the same situation along another sequence $\{\ttt_n\}_n$ (see Case 2c).  

\EMPH{Case 1} Assume that \eqref{D5} holds. As a consequence, $w_n^J=S(t)\left(w_{0,n}^J,w_{1,n}^J\right)$ is independent of $J\geq J_0+1$ and we will simply denote it by $w_n$. 
There exists $N_0>0$ and a small $\eps_0>0$ such that for $n\geq N_0$, $\|(w_{0,n},w_{1,n})\|_{\hdot\times L^2}\geq \eps_0$. Using that (letting $R\to 0$ in \eqref{IPP1}):
$$\int_{0}^{+\infty}\left[\left(\partial_r(rw_{0,n}(r))\right)^2+(rw_{1,n}(r))^2\right]\,dr=\int_{\RR^3} \left(|\nabla w_{0,n}|^2+(w_{1,n})^2\right)\,dx,$$
we get by Lemma \ref{L:lin} that the following holds for all $t\geq 0$ or for all $t\leq 0$:
\begin{multline*}
\forall n\geq N_0,\quad
\int_{|x|\geq |t|}\left(|\nabla w_n(t,x)|^2+(\partial_t w_n(t,x))^2\right)\,dx\\
\geq \int_{|t|}^{+\infty} \left[(\partial_r(rw_n(t,r)))^2+(\partial_t(rw_n(t,r)))^2\right]\,dr\geq \frac{\eps_0}{2}.
\end{multline*}
We are thus exactly in the setting of Lemma \ref{L:channel}, with $(u_{0,n},u_{1,n})=(u(t_n),\partial_tu(t_n))$, and $\rho_n=0$, which gives a contradiction.

\EMPH{Case 2} Assume that \eqref{D4} holds, and chose a small parameter $\eps>0$ such that 
\begin{equation}
 \label{D6}
\eps\leq \left\|(U_{\lin}^{J_0+1}(0),\partial_tU_{\lin}^{J_0+1}(0))\right\|_{\hdot\times L^2},
\end{equation} 
and that any solution $v$ of \eqref{CP} with initial data $(v_0,v_1)$ satisfying $\|(v_0,v_1)\|_{\hdot\times L^2}\leq 10\eps$ is globally defined and scatters.

Reordering the profiles again, we may assume that there exist $J_1,J_2$, with $J_0\leq J_1\leq J_2$ such that
\begin{align}
 \label{D7}
J_0+1\leq j\leq J_2&\Longrightarrow \left\|\left(U_{\lin}^j(0),\partial_t U_{\lin}^j(0)\right)\right\|_{\hdot\times L^2} \geq\eps\\
\label{D8}
J_2+1\leq j&\Longrightarrow \left\|\left(U_{\lin}^j(0),\partial_t U_{\lin}^j(0)\right)\right\|_{\hdot\times L^2} <\eps
\end{align}
and
\begin{itemize}
 \item if $J_0+1\leq j\leq J_1$, $t_{j,n}=0$ for all $n$ and $\left(U_0^j,U_1^j\right)=\iota_j(W,0)+(h_0^j,h_1^j)$, where $\iota_j\in \{\pm 1\}$ and $(h_0^j,h_1^j)\in \hdot \times L^2$ is nonzero and compactly supported;
 \item if $J_1+1\leq j\leq J_2$, then $\lim_{n\to +\infty} t_{j,n}/\lambda_{j,n}=\pm \infty$, or $t_{j,n}=0$ for all $n$ and for all $\lambda>0$, $\left(U_0^j(x),U_1^j(x)\right)\pm \left(\frac{1}{\lambda^{1/2}}W\left(\frac{x}{\lambda}\right),0\right)$ is not compactly supported;
\end{itemize}
Note that by \eqref{D6}, we must have $J_0+1\leq J_2$. 

In order to distinguish between the three remaining cases, we will need to define new sequences of parameters $\{\rho_{j,n}\}_n$ for $J_0+1\leq j\leq J_2$.

If $J_0+1\leq j\leq J_1$, we will denote by $\rho_{j,n}=\rho(h_0^j,h_1^j)\lambda_{j,n}$, where $\rho(\cdot)$ is defined in \eqref{def_rho}. Reordering the profiles and extracting subsequences, we will assume
\begin{equation}
\label{D9}
\lambda_{J_0+1,n}\ll \ldots \ll \lambda_{J_1,n}.
\end{equation} 
Equivalently
\begin{equation}
\label{D10}
\rho_{J_0+1,n}\ll \ldots \ll \rho_{J_1,n}.
\end{equation} 
By Lemma \ref{L:channel_profile}, if $J_1+1\leq j\leq J_2$, there exists $\left(\tU_{0,\lin}^j, \tU_{1,\lin}^j\right)$
such that the nonlinear profile $\tU^J$ associated to $\tU^j_{\lin}$, $\left\{t_{j,n}\right\}_n$, $\left\{\lambda_{j,n}\right\}_n$ is globally defined, scatters, and satisfies \eqref{B18'}, \eqref{B18''} for some $\rho_{j,n}>0$. Reordering the profiles and extracting subsequences, we will assume:
\begin{equation}
\label{D13}
\rho_{J_1+1,n}\leq \ldots \leq \rho_{J_2,n}
\end{equation}
If $J_0<J_1<J_2$ we can assume, after extraction of a subsequence in $n$ that the following limit exists
$$ \ell=\lim_{n\to\infty} \frac{\rho_{J_2,n}}{\rho_{J_1,n}}\in [0,+\infty].$$
We will make the following conventions:
if $J_1=J_0$ (i.e. $\{\rho_{J_1,n}\}_n$ is not defined), we set $\ell=+\infty$; if $J_1=J_2$, (i.e. $\{\rho_{J_2,n}\}_n$ is not defined), we set $\ell=0$. We distinguish between the cases $\ell\in (1,+\infty]$,  $\ell =1$ and $\ell\in [0,1)$.

\EMPH{Case 2a: $\ell>1$}
In particular, $J_0=J_1$ or $J_0<J_1<J_2$ and for large $n$, 
\begin{equation}
\label{grand_rhoJ2}
\rho_{J_2,n}>\rho_{J_1,n}.
\end{equation} 
Let
\begin{multline*}
 (u_{0,n},u_{1,n})=(v_{\lin}(t_n),\partial_tv_{\lin}(t_n))+\sum_{j=1}^{J_1}\left(\frac{\iota_j}{\lambda_{j,n}^{1/2}}W\left(\frac{x}{\lambda_{j,n}},0\right),0\right)+\sum_{j={J_1+1}}^{J_2} \vecc{\tU}_{\lin,n}^j(0)+\left(w_{0,n}^{J_2},w_{1,n}^{J_2}\right).
\end{multline*}
Note that for $J_0+1\leq j\leq J_1$ we have, if $|x|> \rho_{j,n}$
\begin{equation}
 \label{D14}
\vecc{U}_{\lin,n}^j(0,x)=\left(\frac{\iota_j}{\lambda_{j,n}^{1/2}}W\left(\frac{x}{\lambda_{j,n}}\right),0\right).
\end{equation} 
Thus, by \eqref{D10} and \eqref{grand_rhoJ2}, the equality \eqref{D14} holds for any $x$ such that $|x|>\rho_{J_2,n}$. As a consequence, $(u_{0,n}(x),u_{1,n}(x))= (u(t_n,x),\partial_tu(t_n,x))$ for $|x|\geq \rho_{J_2,n}$. Using that (by the definition of $\tU^{J_2}$ in Lemma \ref{L:channel_profile}):
\begin{equation}
 \label{D14'}
\int_{|x|\geq \rho_{J_2,n}+|t|} \left(\left|\nabla \tU_n^{J_2}(t,x)\right|^2+\left(\partial_t \tU_n^{J_2}(t,x)\right)^2\right)\geq \eta>0
\end{equation} 
holds for all $t\geq 0$ or for all $t\leq 0$, we see that we are exactly in the setting of Lemma \ref{L:channel}, which yields the desired contradiction.

\EMPH{Case 2b: $\ell=1$}
This case if very similar to case 2a. Let
$$ \left(H_{0,n}^{J_1},H_{1,n}^{J_1}\right)=\Psi_{\rho_{J_2,n}}\left(\frac{1}{\lambda_{J_1,n}^{1/2}}h_0^{J_1}\left(\frac{x}{\lambda_{J_1,n}}\right),\frac{1}{\lambda_{J_1,n}^{3/2}}h_1^{J_1}\left(\frac{x}{\lambda_{J_1,n}}\right)\right),$$
where $(h_0^{J_1},h_1^{J_1})$ is defined right after \eqref{D8} and the operator $\Psi_R$ in the beginning of Subsection \ref{SS:preliminaries}. Define
\begin{multline*}
 (u_{0,n},u_{1,n})=(v_{\lin}(t_n),\partial_t v_{\lin}(t_n))+\sum_{j=1}^{J_1} \left(\frac{1}{\lambda_{j,n}^{1/2}}W\left(\frac{x}{\lambda_{j,n}}\right),0\right)\\
+\sum_{j=J_1+1}^{J_2} \vecc{\tU}_{\lin,n}^j(0)+\left(H_{0,n}^{J_1},H_{1,n}^{J_1}\right)+\left(w_{0,n}^{J_2},w_{1,n}^{J_2}\right),
\end{multline*}
and note that $(u_{0,n}(x),u_{1,n}(x))=(u(t_n,x),\partial_tu(t_n,x))$ for $|x|\geq \rho_{J_2,n}$.
We have 
\begin{multline*}
\int_{\RR^3}\left(|\nabla H_{0,n}^{J_1}|^2+|H_{1,n}^{J_1}|^2\right)\,dx
=\int_{|x|\geq \rho_{J_2,n}} \left(\left|\nabla h_0^{J_1}\left(\frac{x}{\lambda_{J_1,n}}\right)\right|^2+\left(h_1^{J_1}\left(\frac{x}{\lambda_{J_1,n}}\right) \right)^2\right)\frac{dx}{\lambda_{J_1,n}^3}\\
= \int_{|y|\geq \frac{\rho_{J_2,n}}{\rho_{J_1,n}}\rho(h_0^{J_1},h_1^{J_1})} \left(\left|\nabla h_0^{J_1}(y)\right|^2+\left(h_1^{J_1}(y)\right)^2\right)\,dy\underset{n\to +\infty}{\longrightarrow}0
\end{multline*}
because $\ell=1$. Since \eqref{D14'} holds, the assumptions of Lemma \ref{L:channel} are again satisfied, yielding a contradiction.

\EMPH{Case 2c: $\ell\in [0,1)$}

Note that if $\rho\left(h_0^{J_1},h_1^{J_1}\right)>R_0$ ($R_0$ is defined by Proposition \ref{P:W+compct}) we can, using Lemma \ref{L:channel}, replace $U^{J_1}$ by a globally defined profile $\tU^{J_1}$ with the suitable energy channel property, and argue as in the preceding cases. In what follows, we reduce to this case using Proposition \ref{P:W+compct} \eqref{I:small_supp}.

Let $\cU^{J_1}$ be the solution obtained from $U^{J_1}$ by Proposition \ref{P:W+compct} \eqref{I:small_supp}. It has the following properties:
\begin{itemize}
 \item $\cU^{J_1}$ is defined for $t\in [-R_0,R_0]$;
\item there exists $R\in\left(0, \rho(h_0^{J_1},h_1^{J_1})\right)$ such that
\begin{equation}
 \label{D17}
\left(\cU^{J_1},\partial_t\cU^{J_1}\right)(0,x)=\left(U^{J_1},\partial_tU^{J_1}\right)(0,x)\text{ for }|x|\geq R.
\end{equation} 
\item the following holds for all $t\in [0,R_0]$ or for all $t\in [-R_0,0]$:
\begin{equation}
 \label{D18}
\rho\Big(\cU^{J_1}(t)-\iota_{J_1}W,\partial_t \cU^{J_1}(t)\Big)=\rho\left(h_0^{J_1},h_1^{J_1}\right)+|t|.
\end{equation} 
\end{itemize}
Assume to fix ideas that \eqref{D18} holds for all $t\in [0,R_0]$. The proof is the same in the other case. Let $\varphi\in C_0^{\infty}(\RR^3)$ radial, such that $\varphi(x)=1$ for $|x|\geq \frac{1}{2}$ and $\varphi(x)=0$ for $|x|\leq \frac 14$. Taking a larger $R$ if necessary, we can assume:
\begin{equation}
 \label{D19}
\ell \rho\left(h_0^{J_1},h_1^{J_1}\right)<R<\rho\left(h_0^{J_1},h_1^{J_1}\right).
\end{equation} 
Let
\begin{multline}
\label{D19'}
(\tu_{0,n},\tu_{1,n})=\vec{v}_{\lin}(t_n)+\varphi\left(\frac{x}{R\lambda_{J_{1},n}}\right)\left(\sum_{j=1}^{J_0} \left(\frac{\iota_j}{\lambda_{j,n}^{1/2}}W\left(\frac{x}{\lambda_{j,n}}\right),0\right)+\sum_{j=J_0+1}^{J_1-1} \vecc{U}_{\lin,n}^j(0)\right)\\
+\left(\cU^{J_1}_n(0),\partial_t\cU^{J_1}_n(0)\right)+\sum_{j=J_1+1}^{J_2} \vecc{\tU}^j_{\lin,n}(0)+\big(w_{0,n}^{J_2},w_{1,n}^{J_2}\big).
\end{multline}
We first claim that for large $n$:
\begin{equation}
 \label{D19''}
|x|>R\lambda_{J_1,n}\Longrightarrow (\tu_{0,n}(x),\tu_{1,n}(x))=(u(t_n,x),\partial_tu(t_n,x)),
\end{equation} 
and that for all $J\geq J_2$, 
\begin{multline}
\label{super_devt}
 (\tu_{0,n},\tu_{1,n})=\vec{v}_{\lin}(t_n)+\sum_{j\in \JJJ} \left(\frac{\iota_j}{\lambda_{j,n}^{1/2}}W\left(\frac{x}{\lambda_{j,n}}\right),0\right)\\
+(\cU_{n}^{J_1}(0),\partial_t\cU_{n}^{J_1}(0))+\sum_{j=J_1+1}^{J_2} \vecc{\tU}^j_{\lin,n}(0)+\sum_{j=J_2+1}^J \vecc{U}_{\lin,n}^j(0)+(\cw_{0,n}^J,\cw_{1,n}^J),
\end{multline}
 where
$$ \lim_{J\to \infty}\limsup_{n\to\infty} \left\|S(t)(\cw_{0,n}^J,\cw_{1,n}^J)\right\|_{L^8(\RR^4)}=0,$$
and $\JJJ$ is the set of indexes $j\in \{1,\ldots, J_0\}$ such that
$$ \lim_{n\to\infty} \frac{\lambda_{j,n}}{\lambda_{J_1,n}}=+\infty.$$

For large $n$, $\vecc{\tU}^j_{\lin,n}(0,x)=\vecc{U}^j_{\lin,n}(0,x)$ for $|x|>\rho_{J_2,n}$, and thus, by \eqref{D19} and the definition of $\ell$, for $|x|>R\lambda_{J_1,n}$. Combining with \eqref{D17}, we get \eqref{D19''}.

By the pseudo-orthogonality of the parameters, if $j\in \{1,\ldots,J_0\}\setminus \JJJ$, then $\lim_{n\to\infty}\frac{\lambda_{j,n}}{\lambda_{J_1,n}}=0$.
As a consequence
$$\lim_{n\to \infty}\left\|\varphi\left(\frac{x}{R\lambda_{J_1,n}}\right)\sum_{j=1}^{J_0} \left(\frac{\iota_j}{\lambda_{j,n}^{1/2}}W\left(\frac{x}{\lambda_{j,n}}\right),0\right)-\sum_{j\in \JJJ}\left(\frac{\iota_j}{\lambda_{j,n}^{1/2}}W\left(\frac{x}{\lambda_{j,n}}\right),0\right)\right\|_{\hdot\times L^2}=0.$$ 
Furthermore, if $j=J_0+1,\ldots, J_1-1$, then $\lim_{n\to \infty}\lambda_{j,n}/\lambda_{J_1,n}=0$ and thus
$$\lim_{n\to\infty}\left\|\varphi\left(\frac{x}{R\lambda_{J_1,n}}\right)\sum_{j=J_0+1}^{J_1-1}\vecc{U}_{\lin,n}^j(0)\right\|_{\hdot\times L^2}=0.$$
Thus \eqref{super_devt} follows from \eqref{D19'}.

Let $\tu_n$ be the solution of \eqref{CP} with initial data $(\tu_{0,n},\tu_{1,n})$. 
By \eqref{super_devt} and Proposition \ref{P:approx}, $\tu_{n}$ is defined on $[0,\lambda_{J_1,n}R_0]$ and:
\begin{multline}
 \label{D20}
\vecc{\tu}_n(\lambda_{J_1,n}R_0)=\vec{v}_{\lin}(\lambda_{J_1,n}R_0)+\sum_{j\in \JJJ} \left(\frac{\iota_j}{\lambda_{j,n}^{1/2}}W\left(\frac{x}{\lambda_{j,n}}\right),0\right)\\+\vecc{\cU}^{J_1}_n(\lambda_{J_1,n}R_0)+\sum_{j=J_1+1}^{J_2} \vecc{\tU}_{n}^j(\lambda_{J_1,n}R_0)+\sum_{j=J_2+1}^J \vecc{U}^j_n(\lambda_{J_1,n}R_0)+(\tw_{0,n}^J,\tw_{1,n}^J),
\end{multline}
where
\begin{equation}
 \label{D21}
\lim_{J\to\infty}\limsup_{n\to\infty} \left\|S(t)\left(\tw_{0,n}^J,\tw_{1,n}^J\right)\right\|_{L^8(\RR^4)}=0.
\end{equation} 
Note that
$$ \vecc{\cU}^{J_1}_n(\lambda_{J_1,n}R_0,x)=\left(\frac{1}{\lambda_{J_1,n}^{1/2}}\cU^{J_1}\left(R_0,\frac{x}{\lambda_{J_1,n}}\right),\frac{1}{\lambda_{J_1,n}^{3/2}}\partial_t\cU^{J_1}\left(R_0,\frac{x}{\lambda_{J_1,n}}\right)\right),$$
and recall that by Proposition \ref{P:W+compct}, 
$$ \rho\left(\cU^{J_1}(R_0)-\iota_{J_1}W,\partial_t\cU^{J_1}(R_0)\right)=R_0+\rho(h_0^{J_1},h_1^{J_1})>R_0.$$
Chose a positive $R'$ close to $\rho(h_0^{J_1},h_1^{J_1})$ such that $R<R'<\rho(h_0^{J_1},h_1^{J_1})$.
By Proposition \ref{P:W+compct} \eqref{I:big_supp}, there exists a globally defined solution $\tU^{J_1}$ of \eqref{CP}, scattering in both time directions such that, 
\begin{equation}
 \label{D22}
|x|>R_0+R'\Longrightarrow
\left(\cU^{J_1}(R_0,x),\partial_t\cU^{J_1}(R_0,x)\right)=\left(\tU^{J_1}(R_0,x),\partial_t\tU^{J_1}(R_0,x)\right)
\end{equation} 
and, there exists $\eta>0$ such that for all $t\geq 0$ or for all $t\leq 0$:
\begin{equation}
 \label{D23}
\int_{|x|\geq |t|+R_0+R'} \left[\left|\nabla \tU^{J_1}(R_0+|t|,x)\right|^2+\left(\partial_t \tU^{J_1}(R_0+|t|,x)\right)^2\right]\,dx\geq \eta.
\end{equation} 
Let
$$(u_{0,n},u_{1,n})=\vec{v}_{\lin}(t_n)+\sum_{j\in \JJJ} \left(\frac{\iota_j}{\lambda_{j,n}^{1/2}}W\left(\frac{x}{\lambda_{j,n}}\right),0\right)+\sum_{j=J_1}^{J_2}\vecc{\tU}_n^j(\lambda_{J_1,n}R_0)+(\tw_{0,n}^{J_2},\tw_{1,n}^{J_2}),$$
where $(\tw_{0,n}^{J_2},\tw_{1,n}^{J_2})$ is defined by \eqref{D20}. By finite speed of propagation, \eqref{D19''}, \eqref{D20} and \eqref{D22} we have
$$|x|>\lambda_{J_1,n}(R'+R_0)\Longrightarrow (u_{0,n},u_{1,n})(x)=\vecc{\tu}_{n}(\lambda_{J_1,n}R_0,x)=\vecc{\tu}(t_n+\lambda_{J_1,n}R_0,x).$$
Furthermore, by \eqref{D23}, the following holds for all $t\geq 0$ or for all $t\leq 0$:
\begin{equation*}
 \int_{|x|\geq \lambda_{J_1,n}(R'+R_0)+|t|} \left(\left|\nabla \tU_n^{J_1}(\lambda_{J_1,n}R_0+|t|,x)\right|^2+\left(\partial_t\tU_n^{J_1}(\lambda_{J_1,n}R_0+|t|,x)\right)^2\right)\,dx\geq \eta.
\end{equation*}
The assumptions of Lemma \ref{L:channel} are satisfied with $\rho_n=\lambda_{J_1,n}(R'+R_0)$ along the sequence of time $\{\ttt_n\}$, $\ttt_n=t_n+\lambda_{J_1,n}R_0$, yielding again a contradiction, which concludes the proof of Proposition \ref{P:sequence}. 
\qed
\subsection{Proof for all times}
\label{SS:all_times}
We now conclude the proof of the global case in Theorem \ref{T:main}. Let $u$ be a solution of \eqref{CP} such that $T_+(u)=+\infty$, and let $v_{\lin}$ be given by Lemma \ref{L:vL}. By Proposition \ref{P:boundedness}, there exists a sequence $t_n\to+\infty$ such that $\left\{\vec{u}(t_n)\right\}_n$ is bounded in $\hdot\times L^2$. By Proposition \ref{P:sequence}, there exist $J\in \NN$, $\iota_1,\ldots,\iota_J\in \{\pm 1\}^J$ and sequences $\{\lambda_{j,n}\}_n$ with
$$0<\lambda_{1,n}\ll \ldots \ll\lambda_{J,n}\ll t_n$$
such that (after extraction of a subsequence)
\begin{equation}
 \label{B'35}
\lim_{n\to\infty}\left\| \vec{u}(t_n)-\vec{v}_{\lin}(t_n)-\sum_{j=1}^{J} \left(\frac{\iota_j}{\lambda_{j,n}^{1/2}}W\left(\frac{x}{\lambda_{j,n}}\right),0\right)\right\|_{\hdot\times L^2}=0.
\end{equation} 

\EMPH{Step 1. Convergence of the norms} We first show
\begin{align}
 \label{B'36}
\lim_{t\to +\infty} \left\|\nabla (u-v_{\lin})(t)\right\|_{L^2}^2&=J\|\nabla W\|_{L^2}^2\\
\label{B'37}
\lim_{t\to +\infty} \left\|\partial_t(u-v_{\lin})(t)\right\|_{L^2}^2&=0.
\end{align}
Indeed, by \eqref{B'35},
$$\lim_{n\to+\infty} \left\|\nabla(u-v_{\lin})(t_n)\right\|_{L^2}^2=J\|\nabla W\|^{2}_{L^2}.$$
By the intermediate value theorem, if \eqref{B'36} does not hold, there exists a sequence $t_n'\to +\infty$ and a small $\eps\neq 0$ such that
\begin{equation}
 \label{B'38}
\lim_{n\to+\infty} \left\|\nabla(u-v_{\lin})(t_n')\right\|_{L^2}^2=J\|\nabla W\|^{2}_{L^2}+\eps.
\end{equation} 
By Proposition \ref{P:sequence}, there exists a subsequence of $\{t_n'\}_n$ and $J'\in \NN$ such that
$$\lim_{n\to+\infty} \left\|\nabla(u-v_{\lin})(t_n')\right\|_{L^2}^2=J'\|\nabla W\|^{2}_{L^2},$$
contradicting \eqref{B'38}. This proves \eqref{B'36}. We omit the very close proof of \eqref{B'37}.

\EMPH{Step 2. Choice of the scaling} Define, for $j=1\ldots J$ and $t>0$ large,
$$ B_j:=(j-1) \|\nabla W\|_{L^2}^2+\int_{|x|\leq 1}|\nabla W(x)|^2\,dx$$
and
\begin{equation}
 \label{B'39}
\lambda_{j}(t):=\inf\left\{ \lambda>0\text{ s.t. } \int_{|x|\leq \lambda} \left|\nabla(u-v_{\lin})(t,x)\right|^2\,dx\geq B_j\right\}.
\end{equation} 
In this step we show that if $\theta_n\to+\infty$, there exists a subsequence of $\{\theta_n\}_n$ and $\iota_1',\ldots,\iota_J'\in \{\pm 1\}^J$ such that
\begin{gather}
 \label{B'40}
\lim_{n\to+\infty} \left\|\vec{u}(\theta_n)-\vec{v}_{\lin}(\theta_n)-\sum_{j=1}^J \left(\frac{\iota_j'}{\lambda_{j}^{1/2}(\theta_n)}W\left(\frac{\cdot}{\lambda_j(\theta_n)}\right),0\right)\right\|_{\hdot\times L^2}=0\\
\label{B'41}
\lambda_{1}(\theta_n)\ll \lambda_{2}(\theta_n)\ll \ldots \ll \lambda_J(\theta_n)\ll \theta_n.
\end{gather}
Indeed, we know by Proposition \ref{P:sequence} that there exists a subsequence of $\{\theta_n\}_n$, signs $\{\iota_{j}'\}_{j=1\ldots J}$ and sequences $\{\lambda_{j,n}'\}_n$, such that 
\begin{gather}
 \label{B'42}
\lim_{n\to\infty}\left\| \vec{u}(\theta_n)-\vec{v}_{\lin}(\theta_n)-\sum_{j=1}^{J} \left(\frac{\iota_j'}{(\lambda_{j,n}')^{1/2}}W\left(\frac{x}{\lambda_{j,n}'}\right),0\right)\right\|_{\hdot\times L^2}=0\\
\label{B'42'}
0<\lambda_{1,n}'\ll \ldots \ll \lambda_{J,n}'\ll \theta_n.
\end{gather} 
Let $j\in \{1,\ldots,J\}$. In view of \eqref{B'42}, if $r_0>0$,
$$\lim_{n\to\infty} \int_{|x|\leq r_0\lambda_{j,n}'} |\nabla(u-v_{\lin})(\theta_n,x)|^2\,dx=(j-1)\|\nabla W\|^2_{L^2}+\int_{|x|\leq r_0} |\nabla W|^2\,dx.$$
This shows that if $r_0<1$, $r_0\lambda_{j,n}'<\lambda_j(\theta_n)$ for large $n$, and if $r_0>1$, $r_0\lambda_{j,n}'>\lambda_{j}(\theta_n)$ for large $n$. Hence 
$$\lim_{n\to+\infty} \frac{\lambda_{j}(\theta_n)}{\lambda_{j,n}'}=1,$$
and \eqref{B'40} and \eqref{B'41} follow from \eqref{B'42} and \eqref{B'42'} 

\EMPH{Step 3. End of the proof} Let $\delta>0$ and $\III=(\alpha_1,\ldots,\alpha_J)\in \{-1,+1\}^{J}$. Define
$$ \AAA_{\III,\delta}:=\left\{ f\in \hdot,\; \exists \lambda_1,\ldots,\lambda_J>0\text{ s.t. } \left\|f-\sum_{j=1}^{J} \frac{\alpha_j}{\lambda_j^{1/2}}W\left(\frac{\cdot}{\lambda_j}\right)\right\|_{\hdot}+\sum_{j=1}^{J-1}\frac{\lambda_j}{\lambda_{j+1}}<\delta\right\}.$$
Then:
\begin{claim}
\label{C:distance}
 There exists a small $\delta_0>0$ such that if $\III,\III'\in \{-1,+1\}^{J}$ with $\III\neq \III'$, then 
$$\left(f\in \AAA_{\III,\delta_0}\text{ and } g\in \AAA_{\III',\delta_0}\right)\Longrightarrow\|f-g\|_{\hdot}>\delta_0.$$
\end{claim}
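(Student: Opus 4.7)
The plan is to argue by contradiction. There are only finitely many ordered pairs $(\III,\III')$ of distinct elements of $\{-1,+1\}^J$, so if no $\delta_0$ works, then for one such pair I can extract sequences $\delta_n\to 0$, scales $\lambda_{j,n},\mu_{j,n}>0$ with
\[
\sum_{j=1}^{J-1}\frac{\lambda_{j,n}}{\lambda_{j+1,n}}+\sum_{j=1}^{J-1}\frac{\mu_{j,n}}{\mu_{j+1,n}}<2\delta_n,
\]
and functions $f_n\in \AAA_{\III,\delta_n}$, $g_n\in\AAA_{\III',\delta_n}$ with $\|f_n-g_n\|_{\hdot}<\delta_n$. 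Writing $W_\lambda(x):=\lambda^{-1/2}W(x/\lambda)$, $F_n:=\sum_j\alpha_j W_{\lambda_{j,n}}$ and $G_n:=\sum_j\alpha'_j W_{\mu_{j,n}}$, the triangle inequality reduces the task to showing that $\|F_n-G_n\|_{\hdot}$ stays bounded away from $0$.

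The key quantitative input is the scale-invariant inner product
\[
I(s)\;:=\;\langle \nabla W_\lambda,\nabla W_{s\lambda}\rangle_{L^2},
\]
which depends only on $s>0$. Integrating by parts and using $-\Delta W=W^5$, one computes $I(s)=s^{1/2}\!\int_{\RR^3} W(sy)\,W^5(y)\,dy$; since $W^5\in L^1(\RR^3)$, dominated convergence gives $|I(s)|\lesssim s^{1/2}$ as $s\to 0^+$, and the symmetry $I(s)=I(1/s)$ yields $|I(s)|\lesssim s^{-1/2}$ as $s\to +\infty$. Moreover $I(1)=\|\nabla W\|_{L^2}^2$, and Cauchy--Schwarz gives $|I(s)|\leq \|\nabla W\|_{L^2}^2$ with equality only when $\nabla W_\lambda$ and $\nabla W_{s\lambda}$ are proportional; since both lie in $L^2$ and $W$ is strictly radially decreasing this forces $s=1$, yielding the strict inequality $|I(s)|<\|\nabla W\|_{L^2}^2$ for $s\neq 1$.

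After passing to a subsequence, every ratio $\lambda_{j,n}/\mu_{k,n}$ has a limit $c_{j,k}\in [0,+\infty]$; I declare $j\sim k$ iff $c_{j,k}\in (0,+\infty)$. Because $\lambda_{j,n}/\lambda_{j+1,n}\to 0$ and likewise for $\mu$, the relation $\sim$ is an injective and order-preserving partial matching of $\{1,\ldots,J\}$ to itself. Expanding $\|F_n-G_n\|_{\hdot}^2$ as a sum of $(2J)^2$ inner products of the form $\alpha_j\alpha_l I(\lambda_{j,n}/\lambda_{l,n})$, $\alpha'_k\alpha'_l I(\mu_{k,n}/\mu_{l,n})$ and $-2\alpha_j\alpha'_k I(\lambda_{j,n}/\mu_{k,n})$, and using $I(\cdot)\to 0$ at $0^+$ and $+\infty$ to discard every non-matched contribution, I obtain
\[
\|F_n-G_n\|_{\hdot}^2\;\longrightarrow\;2J\|\nabla W\|_{L^2}^2\;-\;2\sum_{(j,k):\,j\sim k}\alpha_j\alpha'_k\,I(c_{j,k}).
\]

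The assumed vanishing of the left-hand side forces the sum on the right to equal $J\|\nabla W\|_{L^2}^2$. Since $\sim$ produces at most $J$ summands, each of absolute value at most $\|\nabla W\|_{L^2}^2$, the matching must be a bijection and every summand must saturate the Cauchy--Schwarz bound; by the strict inequality above, this pins $c_{j,k}=1$ and $\alpha_j\alpha'_k=+1$ on every matched pair. The only order-preserving bijection $\{1,\ldots,J\}\to\{1,\ldots,J\}$ is the identity, so $\alpha_j=\alpha'_j$ for every $j$, contradicting $\III\neq\III'$. The only step requiring genuine care is the suppression of the $O(J^2)$ unmatched cross-terms in the expansion of $\|F_n-G_n\|_{\hdot}^2$, which is provided by the decay $|I(s)|\lesssim\min(s^{1/2},s^{-1/2})$ combined with the scale-separation built into the definition of $\AAA_{\III,\delta_n}$.
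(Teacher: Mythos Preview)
Your proof is correct and follows the same contradiction strategy as the paper: reduce to $\big\|\sum_j \alpha_j W_{\lambda_{j,n}}-\sum_j \alpha'_j W_{\mu_{j,n}}\big\|_{\hdot}\to 0$ and deduce $\alpha_j=\alpha'_j$ for all $j$. The paper's proof is essentially a one-liner asserting that this implication is ``easy,'' while you have supplied a full justification via the scale-invariant inner product $I(s)$, its decay at $0$ and $\infty$, the strict inequality $I(s)<\|\nabla W\|_{L^2}^2$ for $s\neq 1$, and the order-preserving matching argument.
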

\begin{proof}
 If not, we obtain sequences $\{\lambda_{1,n}\}_n,\ldots,\{\lambda_{J,n}\}_n$, $\{\lambda'_{1,n}\}_n,\ldots,\{\lambda'_{J,n}\}_n$ such that
$$ \lambda_{1,n}\ll \ldots \ll \lambda_{J,n}\quad \text{and}\quad\lambda_{1,n}'\ll \ldots \ll \lambda_{J,n}'$$
and 
$$\lim_{n\to+\infty} \left\|\sum_{j=1}^J \frac{\alpha_j}{\lambda_{j,n}^{1/2}} W\left(\frac{x}{\lambda_{j,n}}\right)-\sum_{j=1}^{J} \frac{\alpha'_j}{\lambda_{j,n}'}W\left(\frac{x}{\lambda_{j,n}'}\right)\right\|_{\hdot}=0$$
which implies easily $\alpha_j=\alpha'_j$ for all $j$, i.e. $\III=\III'$, contradicting the assumptions. 
\end{proof}
Let $\delta_0$ be as in Claim \ref{C:distance}. From Step 2, there exists $t_0>0$ such that  
$$\forall t>t_0,\quad u(t)-v_{\lin}(t)\in \bigcup_{\III\in \{\pm 1\}^J} \AAA_{\III,\delta_0}.$$
By Claim \ref{C:distance} and the continuity of $u$ in $\hdot$, there exists $\III$ such that 
$$\forall t>t_0,\quad  u(t)-v_{\lin}(t)\in \AAA_{\III,\delta_0}.$$
Letting $\III=(\iota_1,\ldots,\iota_J)$, we get by Step 2 and an easy contradiction argument that
$$\lim_{t\to+\infty} \left\|\vec{u}(t)-\vec{v}_{\lin}(t)-\sum_{j=1}^J\left(\frac{\iota_j}{\lambda_{j}^{1/2}(t)} W\left(\frac{\cdot}{\lambda_j(t)}\right),0\right)\right\|_{\hdot\times L^2}=0,$$
which concludes the proof.

\section{Sketch of proof in the finite time blow-up case}
\label{S:blow-up}
This section is devoted to the finite time blow-up case in Theorem \ref{T:main}. Since it is very similar to the proof of the global case which  makes up the preceding section, we will only sketch it, highlighting two points where the proofs are different. 

Consider a solution $u$ of \eqref{CP} such that $T_+(u)<\infty$ which does not satisfy \eqref{BUp}. We must show the expansion \eqref{expansion_u_bup}. Assume without loss of generality that $T_+(u)=1$. In Subsection \ref{SS:outside_light_cone} (the analog of Subsection \ref{SS:free_wave}), we show that $u$ converges outside the light cone $\{|x|\leq 1-t\}$, i.e. we construct the regular part $(v_0,v_1)$ of the expansion \eqref{expansion_u_bup}. The short proof, based on the small data theory and finite speed of propagation, is standard. In Subsection \ref{SS:sequences2} (the analog of Subsection \ref{SS:sequences}), we state that the expansion \eqref{expansion_u_bup} holds along sequences of times. More precisely, as in Subsection \ref{SS:sequences}, this type of expansion holds (after extraction) along any sequence of times $t_n\to 1$ such that $\left\{\vec{u}(t_n)\right\}_n$ is bounded in $\hdot\times L^2$. We omit most of the proof, which is exactly the same as in the global case except for the contradiction by the energy channel argument (the analog of Lemma \ref{L:channel}) where  we give some details. The proof of the fact that the results of Subsection \ref{SS:sequences2} imply the full expansion \eqref{expansion_u_bup} follows almost word by word Subsection \ref{SS:all_times} and we also omit it.

\subsection{Convergence outside the light cone}
\label{SS:outside_light_cone}

\begin{lemma}
\label{L:v0}
Let $u$ be a radial solution of \eqref{CP} such that $T_+(u)=1$ and assume  \eqref{BUp} does not hold. Then there exists $(v_0,v_1)\in \hdot\times L^2$ such that
\begin{equation}
\label{CV_to_v0_v1}
\lim_{t\to 1} \int_{|x|\geq 1-t} \left|\nabla u(t,x)-\nabla v_0(x)\right|^2+\left(\partial_t u(t,x)-v_1(x)\right)^2\,dx=0. 
\end{equation} 
\end{lemma}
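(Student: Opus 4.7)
The plan is to construct $(v_0,v_1)$ on $\{|x|>0\}$ by combining the small data Cauchy theory for \eqref{CP} with finite speed of propagation, then upgrade to the uniform statement \eqref{CV_to_v0_v1}.

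Since $(u_0,u_1)\in\hdot\times L^2$, for any $\eta>0$ below the small data threshold there exists $R_\eta>0$ with $\|\Psi_{R_\eta}(u_0,u_1)\|_{\hdot\times L^2}<\eta$. The solution $\tu_\eta$ of \eqref{CP} with data $\Psi_{R_\eta}(u_0,u_1)$ at $t=0$ is then globally defined, scatters in both time directions, and is continuous on $\RR$ with values in $\hdot\times L^2$. Finite speed of propagation forces $\vec{u}(t,x)=\vec{\tu}_\eta(t,x)$ for $|x|>R_\eta+t$, so the choice $(v_0,v_1)(x):=\vec{\tu}_\eta(1,x)$ yields the desired convergence on the large exterior $\{|x|>R_\eta+1\}$. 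Uniqueness of the Cauchy problem makes this definition independent of $\eta$ on overlaps.

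To extend $(v_0,v_1)$ to all of $\{|x|>0\}$, and thus reach the full region $\{|x|\geq 1-t\}$ appearing in \eqref{CV_to_v0_v1}, I would exploit the hypothesis that \eqref{BUp} fails: there is a sequence $t_n\to 1$ along which $\vec{u}(t_n)$ is bounded in $\hdot\times L^2$. For any fixed $r>0$ and $n$ large enough that $1-t_n<r/2$, I would restart the construction at time $t_n$ with data $\Psi_\rho(\vec{u}(t_n))$ for some $\rho\in(0,r-(1-t_n))$ chosen so that $\|\Psi_\rho(\vec{u}(t_n))\|_{\hdot\times L^2}<\eta$. Small data theory then produces a global exterior solution which, by finite speed of propagation, agrees with $u$ on $\{|x|>\rho+(t-t_n)\}$ for $t\geq t_n$ and extends past $t=1$, defining $(v_0,v_1)$ on $\{|x|>r\}$; uniqueness again glues the pieces consistently.

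The main technical obstacle is the choice of $\rho$ in this second step: it must be small enough to cover $\{|x|>r\}$ at $t=1$ yet large enough to keep the exterior norm below $\eta$. This is not automatic from the sole bound $\|\vec{u}(t_n)\|\leq M$, because the exterior energy of $\vec{u}(t_n)$ need not vanish uniformly in $n$ as $\rho\to\infty$. I would resolve this by extracting a subsequence so that $\vec{u}(t_n)\rightharpoonup(\tilde v_0,\tilde v_1)$ weakly and invoking a profile decomposition in the spirit of Section~\ref{S:global}: each nonzero profile either concentrates at the origin (and so is effectively supported in $\{|x|<\rho\}$ for large $n$) or contributes only to the weak limit $(\tilde v_0,\tilde v_1)$, whose exterior tail $\|\Psi_\rho(\tilde v_0,\tilde v_1)\|_{\hdot\times L^2}$ can be made arbitrarily small by enlarging $\rho$. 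Once $(v_0,v_1)$ is in hand on $\{|x|>0\}$, the uniform statement \eqref{CV_to_v0_v1} follows from the monotonicity of the exterior energy through the null cone $\{|x|+t=1\}$, which promotes the fixed-radius convergence to the desired convergence on the shrinking region $\{|x|>1-t\}$.
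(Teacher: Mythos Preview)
Your first step (truncating at $t=0$ far from the origin, evolving globally, and reading off $(v_0,v_1)$ on a large exterior) is fine and matches the paper. The gap is in the second step, where you attempt to restart at time $t_n$ with data $\Psi_\rho(\vec u(t_n))$ for some small $\rho\in(0,r-(1-t_n))$ and invoke small data theory. This cannot succeed: $\|\Psi_\rho(\vec u(t_n))\|_{\hdot\times L^2}^2=\int_{|x|>\rho}\bigl(|\nabla u(t_n)|^2+(\partial_t u(t_n))^2\bigr)$ contains the full energy of the regular part $(v_0,v_1)$ on $\{|x|>\rho\}$, which is generically of order one no matter how small $\rho$ is. Your proposed fix via profile decomposition does not resolve this: you yourself note that the contribution of the weak limit can only be made small by \emph{enlarging} $\rho$, which is exactly what the constraint $\rho<r-(1-t_n)$ forbids when $r$ is small. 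There is no choice of $\rho$ that is simultaneously small enough to reach $\{|x|>r\}$ and large enough to make the exterior norm below threshold.

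The paper avoids this obstruction by working \emph{locally} rather than in exterior regions. One extracts a weakly convergent subsequence $\vec u(t_n)\rightharpoonup(v_0,v_1)$ and calls a point $x_0$ regular if the energy of $\vec u(t_n)$ in some ball $|x-x_0|<\eps$ is below the small data threshold $\delta_0$ along a subsequence; otherwise $x_0$ is singular. Boundedness of $\vec u(t_n)$ forces the singular set to be finite, and radial symmetry forces it to be $\{0\}$. Around each regular $x_0\neq 0$ one truncates in the ball (not in an exterior), applies small data theory and finite speed of propagation, and obtains $\vec u(t)\to(v_0,v_1)$ on $|x-x_0|<\eps/2$; a separate but identical argument handles $\{|x|>M\}$. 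Finally, letting $v$ solve \eqref{CP} with data $(v_0,v_1)$ at $t=1$, finite speed of propagation gives $\vec u(t,x)=\vec v(t,x)$ for $|x|\geq 1-t$ and $t<1$ close to $1$, which is \eqref{CV_to_v0_v1}. The key idea you are missing is that local smallness near each $x_0\neq 0$ is automatic from the energy bound, whereas global exterior smallness is not.
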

\begin{proof}
 The proof is very close to the one in \cite[Section 3]{DuKeMe11a}, we only need to check that the assumption that $\vec{u}$ is bounded in $\hdot\times L^2$ made in this article can be relaxed to the assumption that $\vec{u}$ is only bounded along a sequence of times.

Since \eqref{BUp} does not hold, there exists a sequence $t_n\to 1$ such that $\{\vec{u}(t_n)\}_n$ is bounded in $\hdot \times L^2$. After extraction of a subsequence, we can assume that there exists $(v_0,v_1)\in \hdot \times L^2$ such that
$$ \vec{u}(t_n)\xrightharpoonup[n\to\infty]{} (v_0,v_1)\text{ weakly in }\hdot\times L^2.$$
Let $x_0\in \RR^3$, and $\delta_0>0$ be a small parameter to be specified later. We distinguish two cases:
\begin{itemize}
 \item \emph{First case:} there exists $\eps>0$ and a subsequence of $\{t_n\}_n$ (still denoted by $\{t_n\}_n$) such that
$$ \lim_{n\to \infty} \int_{|x-x_0|<\eps} \left(|\nabla u(t_n,x)|^2+(u(t_n,x))^2+ (\partial_tu(t_n,x))^2\right)\,dx<\delta_0.$$
In this case, we say that $x_0$ is a regular point, and we show that 
\begin{equation}
 \label{CV_regular}
\lim_{t\to 1} \int_{|x-x_0|<\eps/2} \left(|\nabla u(t,x)-\nabla v_0(x)|^2+ (\partial_tu(t,x)-v_1(x))^2\right)\,dx=0.
\end{equation} 
Indeed, chose $n$ large, so that $|t_n-1|<\eps/2$ and 
$$ \int_{|x-x_0|<\eps} \left(|\nabla u(t_n,x)|^2+(u(t_n,x))^2+ (\partial_tu(t_n,x))^2\right)\,dx<2\delta_0.$$
Let $(\tu_{0},\tu_1)\in \hdot\times L^2$ such that
\begin{equation}
\label{regular1}
(\tu_0,\tu_1)(x)=\vec{u}(t_n,x)\text{ for }|x-x_0|<\eps/2 \quad \text{and}\quad \left\|(\tu_0,\tu_1)\right\|_{\hdot\times L^2}\leq C_0\delta_0 
\end{equation} 
($C_0$ is an absolut constant). Let $\tu$ be the solution of \eqref{CP} such that $(\tu(t_n),\partial_t\tu(t_n))=(\tu_0,\tu_1)$. Chosing $\delta_0>0$ small enough, the small data theory implies that $\tu$ is globally defined. As a consequence, 
\begin{equation*}
\lim_{t\to 1} \int_{|x-x_0|<\eps/2} \left(|\nabla (u-\tu)(t,x)|^2+(\partial_t (u-\tu)(t,x))^2\right)\,dx=0 
\end{equation*} 
(indeed by finite speed of propagation and \eqref{regular1}, the integrand is $0$ if $|t-t_n|\leq \eps/2$). By the definition of $(v_0,v_1)$ and uniqueness of the weak limit, we deduce \eqref{CV_regular}.
\item  \emph{Second case:} for all $\eps>0$,
$$ \liminf_{n\to \infty}\int_{|x-x_0|<\eps} \left(|\nabla u(t_n,x)|^2+(u(t_n,x))^2+ (\partial_tu(t_n,x))^2\right)\,dx\geq \delta_0.$$
In this case we say that $x_0$ is singular.
\end{itemize}
As $\{\vec{u}(t_n)\}_n$ is bounded, there is only a finite number of singular point. By the radial symmetry, $0$ is the only singular point, and the local convergence \eqref{CV_regular} holds for any $x_0\in \RR^3\setminus \{0\}$. By a similar proof than the proof of \eqref{CV_regular}, we can show that there exists  a large $M>0$ such that
\begin{equation}
\label{CV_regular2}
\lim_{t\to 1}\int_{|x|\geq M} \int \left(|\nabla u(t,x)-v_0(x)|^2+(\partial_tu(t,x)-v_1(x))^2\right)\,dx=0.
\end{equation}  
Combining with \eqref{CV_regular} we get that \eqref{CV_regular2} holds for any $M>0$. 
Let $v$ be the solution of \eqref{CP} with data $(v_0,v_1)$ at $t=1$. By \eqref{CV_regular2} and finite speed of propagation, we get that for $t<1$ close to $1$ and $|x|\geq 1-t$, $\vec{v}(t,x)=\vec{u}(t,x)$ . Hence \eqref{CV_to_v0_v1}.
\end{proof}
\subsection{Analysis along a sequence of times}
\label{SS:sequences2}
The analog of Proposition \ref{P:sequence} is the following:
\begin{prop}
\label{P:sequence2}
Let $u$ be a solution of \eqref{CP} such that $T_+(u)=1$.
 Assume that there exists $t_n\to 1$ such that $\left\{\vec{u}(t_n)\right\}_n$ is bounded in $\hdot\times L^2$, and let $(v_0,v_1)$ be given by Lemma \ref{L:vL}. Then, after extraction of subsequences in $n$, there exist $J\geq 1$, $\iota_1,\ldots,\iota_J\in\{\pm 1\}$ and sequences $\{\lambda_{j,n}\}_n$ with $0<\lambda_{1,n}\ll \ldots \ll \lambda_{J,n}\ll 1-t_n$ such that
\begin{equation}
\label{dev_sequence2}
\vec{u}(t_n)-(v_0,v_1)-\sum_{j=1}^J \left(\frac{\iota_j}{\lambda_{j,n}^{1/2}}W\left(\frac{x}{\lambda_{j,n}},0\right),0\right) \underset{n\to +\infty}{\longrightarrow} 0
\end{equation} 
in $\hdot\times L^2$. 
\end{prop}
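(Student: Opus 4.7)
The plan is to follow Proposition \ref{P:sequence} closely. After passing to a subsequence, $\{\vec{u}(t_n)\}_n$ admits a profile decomposition whose weak limit is identified with the $(v_0,v_1)$ of Lemma \ref{L:v0}:
\begin{equation*}
\vec{u}(t_n) = (v_0,v_1) + \sum_{j=1}^{J_0}\left(\frac{\iota_j}{\lambda_{j,n}^{1/2}}W\left(\frac{x}{\lambda_{j,n}}\right),0\right) + \sum_{j=J_0+1}^{J}\vec{U}^j_{\lin,n}(0) + (w_{0,n}^J,w_{1,n}^J).
\end{equation*}
My first step is a localization using Lemma \ref{L:v0}: the scale bound $\lambda_{j,n}\ll 1-t_n$ must hold for every concentrating profile, since a profile with $\lambda_{j,n}\gtrsim 1-t_n$ would deposit nontrivial energy outside the blow-up cone $\{|x|>1-t_n\}$, contradicting the strong convergence of $\vec{u}(t_n)$ to $(v_0,v_1)$ there. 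Arguing by contradiction (supposing that some profile is not a rescaled $\pm W$, or that the radiation term does not vanish), the whole reduction of Proposition \ref{P:sequence}---inductive elimination of $W$-profiles, the dichotomy 2a/2b/2c on the relative sizes of $\rho_{j,n}$, and the invocation of Lemma \ref{L:channel_profile} together with Proposition \ref{P:W+compct}---transcribes verbatim. The only ingredient that needs to be redone is the analog of Lemma \ref{L:channel}.

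Assume a profile $\tU^{j_0}$ with $\rho_n\ll 1-t_n$ produced by Lemma \ref{L:channel_profile} satisfies the channel lower bound $\eta$ for all $s\geq 0$ or for all $s\leq 0$. Let $u_n(s,x)=u(t_n+s,x)$ on $[-t_n,1-t_n)$ and let $v$ be the nonlinear solution with $\vec{v}(1)=(v_0,v_1)$, so that by Lemma \ref{L:v0} and finite speed of propagation, $\vec{u}(t)=\vec{v}(t)$ on the exterior $\{|x|>1-t\}$ of the blow-up cone for $t$ near $1$.

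In the forward case, I would set $s_n=(1-t_n)/2$ and $\ttt_n=(1+t_n)/2<1$; then $\rho_n+s_n<1-\ttt_n$, so the channel region lies inside the cone exterior at $\ttt_n$ where $\vec{u}=\vec{v}$. Applying Proposition \ref{P:approx} on $[0,s_n]$ and the orthogonality Claim \ref{C:ortho} converts the channel bound into
\begin{equation*}
\int_{|x|>\rho_n+s_n}\!\left(|\nabla(u-v)(\ttt_n)|^2+(\partial_t(u-v)(\ttt_n))^2\right)dx\geq\eta/2,
\end{equation*}
which contradicts $\vec{u}(\ttt_n)=\vec{v}(\ttt_n)$ on this region. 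In the backward case, I would set $s_n=-t_n$; the analogous application of Proposition \ref{P:approx} on $[-t_n,0]$ would yield
\begin{equation*}
\int_{|x|>\rho_n+t_n}\!\left(|\nabla(u_0-v(0))|^2+(u_1-\partial_tv(0))^2\right)dx\geq\eta/2.
\end{equation*}
Since $\rho_n+t_n\to 1^-$ and the integrand vanishes on $\{|x|>1\}$ (where $u(0)=v(0)$ by finite speed of propagation from Lemma \ref{L:v0}), the left-hand side tends to $0$, a contradiction.

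The main obstacle is the backward case: the solution $v$ is a priori only defined in a neighborhood of $t=1$, and may fail to extend back to $t=0$, so $v(0)$ needs to be interpreted carefully. I would handle this by replacing $(v_0,v_1)$ in the profile decomposition by a truncation $\Psi_R(v_0,v_1)$ (as defined in \S\ref{SS:preliminaries}) with $R$ close to $1$, whose nonlinear evolution can be controlled on $[-t_n,1-t_n)$ and which coincides with $(v_0,v_1)$ on the effective channel region $\{|x|>\rho_n+t_n\}\subset\{|x|>R\}$ for large $n$; a supplementary inner profile supported in $\{|x|<R\}$ accounts for the discrepancy without contributing to the channel bound by finite speed of propagation. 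With this adjustment the argument closes, and the rest of the proof of Proposition \ref{P:sequence2} carries over from Proposition \ref{P:sequence}.
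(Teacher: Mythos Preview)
Your overall strategy matches the paper: the argument of Proposition~\ref{P:sequence} transports once the analog of Lemma~\ref{L:channel} (the paper's Lemma~\ref{L:channel2}) is established, and your forward and backward channel arguments are essentially the paper's. (One slip: in the forward case you need $\rho_n+s_n\geq 1-\ttt_n$, not $<$; that is the inequality that places the channel region $\{|x|>\rho_n+s_n\}$ inside the cone exterior $\{|x|>1-\ttt_n\}$.)

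The genuine gap is your resolution of the backward-case obstacle. Splitting $(v_0,v_1)=\Psi_R(v_0,v_1)+[\text{inner part}]$ with the \emph{same} parameters $(\lambda_{j,n},t_{j,n})=(1,0)$ does not produce a valid profile decomposition, and your claim that the inner part does not contribute to the channel bound by finite speed of propagation is false: the inner data is supported in $\{|x|\leq R\}$, so after backward evolution by time $t_n$ it occupies $\{|x|\leq R+t_n\}$, which overlaps the channel region $\{|x|>\rho_n+t_n\}$ whenever $\rho_n<R$ --- and this holds for all large $n$ since $\rho_n\to 0$. Nor does taking $R$ close to $1$ make $\|\Psi_R(v_0,v_1)\|_{\hdot\times L^2}$ small, so there is no reason its nonlinear evolution should exist on $[-t_n,0]$. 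The paper sidesteps all of this with a one-line observation you missed: by the scaling/translation invariance of \eqref{CP}, and since $v$ is defined on an open neighborhood of $t=1$, one may rescale the whole problem so that without loss of generality $[0,2]\subset I_{\max}(v)$. Then $v(0)$ is well-defined and your backward argument runs exactly as you wrote it.
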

The following lemma is the finite-time analog of Lemma \ref{L:channel}:
\begin{lemma}
 \label{L:channel2}
Let $u$ be as in Proposition \ref{P:sequence2}.
There exists no sequence $\{t_n\}_n$ such that $t_n\to 1$ with the following property. There exists a sequence of functions $\{(u_{0,n},u_{1,n})\}_n$, bounded in $\hdot\times L^2$, and a sequence $\{\rho_n\}$ of nonnegative numbers such that
\begin{equation}
 \label{F9}
|x|\geq \rho_n\Longrightarrow (u(t_n,x),\partial_tu(t_n,x))=(u_{0,n}(x),u_{1,n}(x)),
\end{equation} 
and there exists $J_0\in \NN$, $\iota_1,\ldots,\iota_{J}\in \{\pm 1\}$ such that $(u_{0,n},u_{1,n})$ has a profile decomposition of the following form:
\begin{equation}
\label{F10}
(u_{0,n},u_{1,n})=(v_0,v_1)+\sum_{j=1}^{J_0} \left(\frac{\iota_j}{\lambda_{j,n}^{1/2}}W\left(\frac{x}{\lambda_{j,n}}\right),0\right)+\sum_{j=J_0+1}^J \vecc{U}_{\lin,n}^j(0)+(w_{0,n}^J,w_{1,n}^J), 
\end{equation} 
where for all $j\geq J_0+1$, the nonlinear profile $U^j$ is globally defined and scatters in both time directions. Furthermore, there exists $\eps_0>0$ such that one of the following holds:
\begin{enumerate}
 \item \label{I:F12} there exists $j_0\geq J_0+1$ such that for all $t\geq 0$ or for all $t\leq 0$:
\begin{equation}
\label{F12}
\forall n \quad \int_{|x|\geq \rho_n+|t|} |\nabla U_n^{j_0}(t,x)|^2+(\partial_t U_n^{j_0}(t,x))^2\,dx \geq \eps_0
\end{equation} 
or
\item \label{I:F12'} for at least one sign $+$ or $-$,
\begin{equation}
 \label{F12'} \lim_{J\to+\infty}\lim_{n\to+\infty}  \inf_{\pm t\geq 0} \int_{|x|\geq \rho_n+|t|} |\nabla w_n^{J}(t,x)|^2+(\partial_t w_n^{J}(t,x))^2\,dx \geq \eps_0.
\end{equation} 
\end{enumerate}
\end{lemma}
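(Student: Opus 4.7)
My plan is to mirror the proof of Lemma~\ref{L:channel} by induction on $J_0$, adapting both contradiction mechanisms to the finite-time setting via Lemma~\ref{L:v0}. The inductive step ($J_0\to J_0-1$) replaces the innermost $W$-bubble with its $\Psi_T$-truncation (for $T$ large) and reapplies Proposition~\ref{P:approx} exactly as in the global case, so I focus on the base case $J_0=0$. There, let $u_n$ be the solution of \eqref{CP} with data $(u_{0,n},u_{1,n})$ at $t=0$, let $v$ be the nonlinear solution with data $(v_0,v_1)$ at $t=0$ (defined on a neighborhood of $0$), and note that the other profiles $U^j$ ($j\geq 1$) are globally defined and scatter in both directions by hypothesis. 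Proposition~\ref{P:approx} then yields
\[
\vec u_n(t)=\vec v(t)+\sum_{j=1}^J\vecc{U}^j_n(t)+\vecc w^J_n(t)+\vecc r^J_n(t)
\]
on short intervals containing $0$, with $\vecc r^J_n$ small in $\hdot\times L^2$ for large $J,n$.

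For the future-channel case, the key geometric choice is $T_n:=(1-t_n)/2$; setting $s_n:=t_n+T_n\to 1^-$, one has $\rho_n+T_n\ge T_n=1-s_n$ \emph{for every} $\rho_n\ge 0$, so the channel region $\{|x|>\rho_n+T_n\}$ always lies in the exterior of the light cone of $u$ at $s_n$. Applying Proposition~\ref{P:approx} on $[0,T_n]$, combining the channel hypothesis with Claim~\ref{C:ortho}, and using finite speed of propagation to replace $\vec u_n(T_n,x)$ by $\vec u(s_n,x)$ on the shell, I obtain
\[
\int_{|x|>\rho_n+T_n}\bigl(|\nabla u(s_n)-\nabla v(T_n)|^2+(\partial_t u(s_n)-\partial_t v(T_n))^2\bigr)\,dx\ge\tfrac{\eps_0}{2}.
\]
The triangle inequality through $(v_0,v_1)$ forces the left-hand side to $0$: Lemma~\ref{L:v0} kills the $\vec u(s_n)-(v_0,v_1)$ piece because the shell lies outside the light cone at $s_n$, and continuity of $v$ in $\hdot\times L^2$ kills the $\vec v(T_n)-(v_0,v_1)$ piece as $T_n\to 0$. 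This is a contradiction.

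For the past-channel case, the analogous backward scheme on $[-\tau_n,0]$ with $\tau_n:=(1-t_n)/2$ yields a similar lower bound for $\int_{|x|>\rho_n+\tau_n}|\vec u(t_n-\tau_n)-\vec v(-\tau_n)|^2$; when $\rho_n\ge 1-t_n$ along a subsequence, the shell is outside the light cone of $u$ at $t_n-\tau_n$ and the same Lemma~\ref{L:v0}-plus-continuity argument concludes. The \textbf{main obstacle} is the complementary sub-case $\rho_n<1-t_n$, which is the generic one in applications since $\rho_n$ typically arises from profile scales $\lambda_{j,n}\ll 1-t_n$. Here I would evaluate the past-channel hypothesis at $t=0$ (giving $\int_{|x|>\rho_n}(|\nabla U^{j_0}_n(0)|^2+(\partial_t U^{j_0}_n(0))^2)\ge\eps_0$), apply Claim~\ref{C:ortho} at $t=0$ to the initial data, and invoke \eqref{F9} to obtain $\int_{|x|>\rho_n}|\vec u(t_n)-(v_0,v_1)|^2\ge\eps_0/2$. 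Lemma~\ref{L:v0} together with absolute continuity of the $\hdot\times L^2$ density of $(v_0,v_1)$ on the shrinking ball $\{|x|<1-t_n\}$ localizes this lower bound to the thin annulus $\{\rho_n<|x|<1-t_n\}$; propagating the localized bound forward in $u$'s flow by time $1-t_n$, using the monotonicity of the nonlinear energy density $|\nabla u|^2+(\partial_t u)^2-\tfrac{1}{3}u^6$ outside outgoing cones, produces an analogous lower bound at a time $s_n\to 1^-$ that contradicts Lemma~\ref{L:v0}.
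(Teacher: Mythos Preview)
Your future-channel argument and the inductive step are essentially the paper's. The gap is in the past-channel base case when $\rho_n<1-t_n$. The ``monotonicity of the nonlinear energy density outside outgoing cones'' you invoke does not exist in the form you need: the density $|\nabla u|^2+(\partial_t u)^2-\tfrac13 u^6$ has no sign, for the \emph{focusing} equation the flux through an outgoing cone can have either sign, and in any event the quantity you have bounded below is $|\vec u(t_n)-(v_0,v_1)|^2$, which is not governed by any conservation or monotonicity law for \eqref{CP}. Propagating forward ``by time $1-t_n$'' also lands at the blow-up time.

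The paper's remedy avoids any case split on $\rho_n$: in the past-channel case, go backward by a \emph{fixed} distance (of order $1$) rather than by $\tau_n\to 0$. After rescaling and time-translating $u$ so that the regular solution $v$ with $\vec v(1)=(v_0,v_1)$ is defined on $[0,2]$, Proposition~\ref{P:approx} shows $u_n$ is defined on all of $[-1,1]$, since $t\mapsto\vec v(1+t)$ is defined there and every other profile scatters. Evaluate the expansion and the channel hypothesis at $t=-t_n$: finite speed of propagation gives $\vec u_n(-t_n,x)=\vec u(0,x)$ on $\{|x|>\rho_n+t_n\}$, and orthogonality (together with $\vec v(1-t_n)\to\vec v(0)$) yields
\[
\int_{|x|>\rho_n+t_n}\bigl(|\nabla u_0-\nabla v(0)|^2+(u_1-\partial_t v(0))^2\bigr)\,dx\ \ge\ \frac{\eps_0}{2}.
\]
But $(u_0-v(0),\,u_1-\partial_t v(0))$ vanishes on $\{|x|>1\}$ (since $\vec u(t)=\vec v(t)$ on $\{|x|>1-t\}$ for $t\in[0,1)$), and $\rho_n+t_n\ge t_n\to 1$, so the left side tends to $0$ regardless of the size of $\rho_n$. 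The missing idea is precisely this: the correct backward target is the \emph{fixed} initial time of $u$, not a time at distance $O(1-t_n)$ from $t_n$.
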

Assuming Lemma \ref{L:channel2}, the proof of Proposition \ref{P:sequence2} is the same as the one of Proposition \ref{P:sequence}, replacing everywhere Lemma \ref{L:channel} by Lemma \ref{L:channel2}, $t\to +\infty$ by $t\to 1$ and $(v_{\lin}(t),\partial_tv_{\lin}(t))$ by $(v_0,v_1)$. We leave the details to the reader.
\begin{proof}[Proof of Lemma \ref{L:channel2}]
As in the proof of Lemma \ref{L:channel}, we argue by induction on $J_0$. The inductive step is the same than in Lemma \ref{L:channel} and we will only detail the case $J_0=0$. 

We denote by $v$ the solution of \eqref{CP} such that $\vec{v}(1)=(v_0,v_1)$. Using scaling and time translation, we can assume without loss of generality that $[0,2]$ is included in $I_{\max}(v)$. 

Let $u_n$ be the solution of \eqref{CP} with data $(u_{0,n},u_{1,n})$ at $t=0$. By Proposition \ref{P:approx}, $u_n$ is defined on $[-1,1]$ for large $n$. Furthermore,
\begin{equation}
 \label{F13}
\vec{u}_n(t,x)=\vec{v}(1+t,x)+\sum_{j=1}^J \vecc{U}_n^j(t,x)+\vecc{w}_n^J(t,x)+\vecc{r}_n^J(t,x),
\end{equation} 
where 
\begin{equation}
 \label{F14}
\lim_{J\to\infty}\limsup_{n\to\infty} \sup_{t\in [-1,1]} \left\|\vecc{r}_n^J(t,x)\right\|_{\hdot\times L^2}^2=0.
\end{equation} 
First assume that \eqref{F12} or \eqref{F12'} hold for all $t\geq 0$. Then by \eqref{F13} at $t=\frac{1-t_n}{2}$, \eqref{F12} or \eqref{F12'} and the orthogonality Claim \ref{C:ortho}, we get that for large $n$,
\begin{multline}
 \label{F15}
\int_{|x|\geq \rho_n+\frac{1-t_n}{2}} \left|\nabla u_n\left(\frac{1-t_n}{2},x\right)-\nabla v\left(\frac{1+t_n}{2},x\right)\right|^2\,dx\\
+\int_{|x|\geq \rho_n+\frac{1-t_n}{2}} \left(\partial_t u_n\left(\frac{1-t_n}{2},x\right)-\partial_t v\left(\frac{1+t_n}{2},x\right)\right)^2\,dx \geq \frac{\eps_0}{2}.
\end{multline} 
We have used that $\lim_n \frac{1-t_n}{2}=0$, and hence, by continuity of $v$ at $t=1$,
$$\lim_{n\to\infty} \left\|\vec{v}\left(\frac{1+t_n}{2}\right)-\vec{v}\left(1+\frac{1-t_n}{2}\right)\right\|_{\hdot\times L^2}=0.$$
By finite speed of propagation and \eqref{F9}, we deduce from \eqref{F15} that for large $n$,
\begin{equation}
 \label{F16}
\int_{|x|\geq \rho_n+\frac{1-t_n}{2}} \left|(\nabla u-\nabla v)\left(\frac{1+t_n}{2},x\right)\right|^2
+\left((\partial_t u-\partial_t v)\left(\frac{1+t_n}{2},x\right)\right)^2\,dx\geq \frac{\eps_0}{2}.
\end{equation} 
Since $u-v$ is supported in $\{|x|\leq 1-t\}$, and $1-\frac{1+t_n}{2}=\frac{1-t_n}{2}$, this is a contradiction.

Next, we assume that \eqref{F12} or \eqref{F12'} hold for all $t\leq 0$. By \eqref{F13} at $t=-t_n$, \eqref{F12} or \eqref{F12'} and Claim \ref{C:ortho}, we get that for large $n$:
\begin{equation*}
 \int_{|x|\geq \rho_n+t_n} \left|\nabla u_n(-t_n,x)-\nabla v(0,x)\right|^2+\left(\partial_t u_n(-t_n,x)-\partial_t v(0,x)\right)^2\,dx\geq\frac{\eps_0}{2}.
\end{equation*} 
Using again \eqref{F9} and finite speed of propagation, we deduce that for large $n$,
$$\int_{|x|>\rho_n+t_n} |\nabla u_0(x)-\nabla v(0,x)|^2+(u_1(x)-\partial_t v(0,x))^2\,dx\geq \frac{\eps_0}{2}.$$
Letting $n\to\infty$, and using that $(u_0-v(0,x),u_1-\partial_t v(0,x))$ is almost everywhere $0$ in the set $\{|x|>1\}$, we get again a contradiction, concluding the proof.
\end{proof}

\appendix
\section{Cauchy problem for the linearized equation}
\label{A:linearized}
In this appendix we prove Lemma \ref{L:CPCarlos} and Claim \ref{C:WV_OK}. 
\begin{proof}[Proof of Lemma \ref{L:CPCarlos}]
 Let $F_V(h)=5 V^{4}h+10V^3h^2+10V^2h^3+5Vh^4+h^5$. We want to solve the equation
$$ h(t)=S(t)(h_0,h_1)+\int_{0}^t \frac{\sin\left((t-s)\sqrt{-\Delta}\right)}{\sqrt{-\Delta}} F_V(h(s))\,ds$$
by fixed point. Define
$$L^p_IL^q=L^p(I,L^q(\RR^3)),\quad \|h\|_S=\|h\|_{L^8_IL^8},\quad \|h\|_{W}=\|h\|_{L^4_IL^4}.$$
For $a>0$, we let
$$ B_a=\left\{v\in L^8_IL^8\text{ s.t. } \|v\|_S\leq a \text{ and }\|D_x^{1/2}v\|_W\leq a\right\}$$
and
$$ \Phi_{(h_0,h_1)}(v)=S(t)(h_0,h_1)+\int_{0}^t \frac{\sin\left((t-s)\sqrt{-\Delta}\right)}{\sqrt{-\Delta}}F_V(v(s))\,ds.$$
We will show that if \eqref{V_small} and \eqref{h0_small} hold, we can chose $a>0$ so that
$$\Phi_{(h_0,h_1)}: B_a\to B_a$$
and is a contraction.
By the Strichartz inequality (see \cite{GiVe95}, \cite{LiSo95})
\begin{equation}
 \label{E1}
\|S(t)(h_0,h_1)\|_{S}+\|D^{1/2}S(t)(h_0,h_1)\|_{W}\leq C\delta,
\end{equation} 
and, for $t\in I$,
\begin{multline*}
\left\|\int_0^t \frac{\sin\left((t-s)\sqrt{-\Delta}\right)}{\sqrt{-\Delta}}F_V(v(s))\,ds\right\|_S+\left\|D^{1/2}_x \int_0^t \frac{\sin\left((t-s)\sqrt{-\Delta}\right)}{\sqrt{-\Delta}}F_V(v(s))\,ds\right\|_W\\ \leq C\left\|D_x^{1/2} F_V(v)\right\|_{L^{4/3}_IL^{4/3}}.
\end{multline*}
We estimate  $\left\|D_x^{1/2} F_V(v)\right\|_{L^{4/3}_IL^{4/3}}$ using the chain, Leibnitz rule \cite{KePoVe93}:
\begin{align}
 \label{E2}
\left\|D_x^{1/2}(v^5)\right\|_{L^{4/3}_IL^{4/3}}&\leq C\|v\|^4_S\|D^{1/2}_xv\|_W\\
\label{E3}
\left\|D_x^{1/2}(Vv^4)\right\|_{L^{4/3}_IL^{4/3}}&\leq C\|v^4\|_{L^2_IL^2}\|D^{1/2}_xV\|_{L^4_IL^4}+C \|V\|_{L^8_IL^8}\left\|D_x^{1/2}(v^4)\right\|_{L^{\frac 85}_IL^{\frac 85}}\\
\notag
&\qquad \leq C\delta \|v\|_S^{4}+C\delta \|v\|^3_S\left\|D^{1/2}_xv\right\|_W\\
\label{E4}
\left\|D_x^{1/2}\left(V^2v^3\right)\right\|_{L^{4/3}_IL^{4/3}}&\leq C\left\|v^{3}\right\|_{L^{\frac 83}_IL^{\frac 83}}\left\|D_x^{1/2}(V^2)\right\|_{L^{\frac 83}_IL^{\frac 83}}+C\left\|D_x^{1/2}v^3\right\|_{L^2_IL^2}\left\|V^2\right\|_{L^4_IL^4}\\
\notag
&\qquad \leq C\delta \|v\|^3_S+C\delta^2 \|v\|^2_S\left\|D^{1/2}_x v\right\|_W\\
\label{E5}
\left\|D_x^{1/2}\left(V^3v^2\right)\right\|_{L^{4/3}_IL^{4/3}}&\leq C\left\|v^2\right\|_{L^4_IL^4}\left\|D_x^{1/2}\left(V^3\right)\right\|_{L^2_IL^2}+C\left\|V^3\right\|_{L^{\frac 83}_IL^{\frac 83}}\left\|D_x^{1/2}\left(v^2\right)\right\|_{L^{\frac 83}_IL^{\frac 83}}\\
\notag
&\qquad \leq C\delta\|v\|^2_S+C\delta^3\|v\|_S\left\|D^{1/2}_x v\right\|_W\\
\label{E6}
\left\|D^{1/2}_x\left(V^4v\right)\right\|_{L^{4/3}_IL^{4/3}}&\leq C\left\|D_x^{1/2}(V^4)\right\|_{L^{\frac 85}_IL^{\frac 85}}\|v\|_S+C\left\|v^4\right\|_{L^{2}_IL^2}\left\|D_x^{1/2}v\right\|_{W}\\
\notag
&\qquad \leq C\delta \|v\|_{S}+C\delta^4 \left\|D_x^{1/2}v\right\|_W.
\end{align}
By \eqref{E1}, we need that for some large $C_0>0$:
$$ C_0\delta\leq a/2.$$
By \eqref{E2}, we need for some large $C_1>0$:
$$ C_1 a^4 \leq 1/2.$$
Finally, by \eqref{E3}, \eqref{E4}, \eqref{E5} and \eqref{E6} we need that for some large $C_2>0$
$$ C_2\delta\left(a^3+a^2+\delta a^2+a +\delta^2a+1+\delta^3\right)\leq 1/2.$$
Taking $a=2C_0\delta$, we see that the preceding conditions are satisfied for small $\delta$, which shows that $\Phi_{(h_0,h_1)}$ maps $B_a$ to $B_a$. The contraction argument is similar and we omit it.
\end{proof}
\begin{proof}[Proof of Claim \ref{C:WV_OK}]
 We will write $f(r)\approx g(r)$ if $f(r)/g(r)$ has a limit in $(0,+\infty)$ as $r\to \infty$. 

We have, for $k\in \NN\setminus \{0\}$,
\begin{equation}
 \label{E7} 
W^k(r)\approx \frac{1}{r^k},\quad \nabla (W^k)\approx \frac{1}{r^{k+1}}.
\end{equation} 
Thus, if $p\in [1,\infty)$,
\begin{equation}
 \label{E8} 
W^k\in L^p(\RR^3) \iff kp>3
\end{equation}
and in this case
\begin{equation}
 \label{E8'}
\int_{|x|\geq R}W^{kp}(x)\,dx \approx \frac{1}{R^{kp-3}},\quad \int_{|x|\leq R}W^{kp}(R)\,dx \approx \frac{1}{R^{kp-3}}.
\end{equation} 
Similarly, if $q\in [1,\infty)$, 
\begin{equation}
 \label{E9} 
\nabla (W^k)\in L^q(\RR^3) \iff (k+1)q >3,\quad \int_{|x|\geq R} \left|\nabla(W^k)\right|^q\,dx \approx \frac{1}{R^{(k+1)q-3}}.
\end{equation} 
Recall from \cite[Lemma 5]{NaPo09} the following interpolation inequalities
\begin{equation}
 \label{E10}
\left\|D^{1/2}_xf\right\|_{L^{\ell}(\RR^3)}\leq C\|f\|_{L^p(\RR^3)}^{1/2}\|\nabla f\|_{L^q(\RR^3)}^{1/2},\quad \frac{1}{\ell}=\frac{1}{2p}+\frac{1}{2q}.
\end{equation} 
By \eqref{E8}, $W\in L^{8}(\RR^3)$. By \eqref{E8}, \eqref{E9} and \eqref{E10} we get:
\begin{align*}
 (k=1,\; \ell=4,\; p=6,\; q=3)&\qquad D^{1/2}_x W\in L^4\\
(k=2,\;\ell=\frac{8}{3},\; p=4,\; q=2)&\qquad D_x^{1/2} (W^2)\in L^{\frac{8}{3}}\\
(k=3,\; \ell=2,\; p=4,\; q=\frac{4}{3})&\qquad D_x^{1/2}(W^3)\in L^2\\
(k=4,\; \ell=\frac 85,\; p= \frac 83,\; q=\frac 87)&\qquad D_x^{1/2}(W^4)\in L^{\frac 85}. 
\end{align*}
This shows point \eqref{I:W_OK} in the Claim.

To prove \eqref{I:V_OK}, we use the same values of $\ell$, $p$ and $q$ as before to show that for all $t$,
$$V(t) \in L^8,\quad D_x^{1/2}V(t)\in L^4,\quad D_x^{1/2}(V^2(t))\in L^{\frac{8}{3}},\quad D_x^{1/2}(V^3(t))\in L^2,\quad D_x^{1/2}(V^4(t))\in L^{\frac 85}.$$
Furthermore, by \eqref{E8'},
$$ \|V(t)\|_{L^8}^8\approx \frac{1}{(R_0+|t|)^{5}},\quad R_0+|t|\to +\infty,$$
and by \eqref{E8'}, \eqref{E9} and \eqref{E10} with the values of $\ell$, $p$, $q$ given above we get, for any $k=1,2,3,4$,
\begin{multline*}
 \left\|D_x^{1/2}(V^k_{R_0}(t))\right\|_{L^{\ell}}^{\ell}\lesssim \left\| V_{R_0}^k(t)\right\|_{L^p}^{\frac{\ell}{2}}\left\|D_x^{1/2}\left(V_{R_0}^k(t)\right)\right\|_{L^q}^{\frac{\ell}{2}}\\
\lesssim \frac{1}{\left((|t|+R_0)^{kp-3}\right)^{\frac{\ell}{2p}}}\times \frac{1}{\left((|t|+R_0)^{(k+1)q-3}\right)^{\frac{\ell}{2q}}}\lesssim \frac{1}{(|t|+R_0)^{k\ell+\frac{\ell}{2}-3}}.
\end{multline*}
Checking that in each case, $k\ell+\frac{\ell}{2}-3>1$, we get that for large $R_0$, $V$ satisfies \eqref{V_small}. The proof is complete.
\end{proof}

\section{Pseudo-orthogonality of the profiles}
\label{A:ortho}
In this appendix we prove Claim \ref{C:ortho}.

\EMPH{Step 1: reduction to solutions of the linear equation}
If $\lim_n\frac{\theta_n-t_{j,n}}{\lambda_{j,n}}=+\infty$ (respectively $-\infty$), then by \eqref{bounded_strichartz}, $U^j$ scatters forward in time (respectively backward in time), i.e. there exists a solution $V^j$ to the linear wave equation such that 
$$\lim_{t\to+\infty} \left\|\vecc{V}^{j}(t)-\vecc{U}^j(t)\right\|_{\hdot\times L^2}=0 \text{ (respectively }\lim_{t\to -\infty}...=0).$$
If $\frac{\theta_n-t_{j,n}}{\lambda_{j,n}}$, is bounded, we can always assume, after extraction, that it converges to a real number $t_0$, and we define $V^j_{\lin}$ as the solution of the linear wave equation \eqref{lin_CP} with data $\vecc{U}^j(t_0)$ at $t=t_0$. In both cases, $V^j_{\lin}$ satisfies
$$\lim_{n\to\infty} \left\|\vecc{U}_n^j(\theta_n)-\vecc{V}_{\lin,n}^j(\theta_n)\right\|_{\hdot\times L^2}=0,$$
where 
\begin{equation}
\label{B41} 
V_{\lin,n}^j(t,x)=\frac{1}{\lambda_{j,n}^{1/2}}V_{\lin}^j\left(\frac{t-t_{j,n}}{\lambda_{j,n}},\frac{x}{\lambda_{j,n}}\right).
\end{equation} 
Arguing similarly for the index $k$, we see that it is sufficient to prove \eqref{orthojk} and \eqref{ortho_wnJ} with the nonlinear profiles $U^j_{n}$ and $U^k_n$ replaced by the linear profiles $V^j_{\lin,n}$ and $V^k_{\lin,n}$. Replacing $t_{j,n}$ and $t_{k,n}$ by $t_{j,n}-\theta_n$ and $t_{k,n}-\theta_n$, and $w_n^J(t,x)$ by $w_n^J(t-\theta_n,x)$, we see that we can also assume $\theta_n=0$. Finally, we must prove:
\begin{align}
 \label{B7'}
 j\neq k\Longrightarrow \lim_{n\to \infty} \int_{\rho_n\leq |x|\leq \sigma_n} \left(\nabla V^j_{\lin,n}(0,x)\cdot \nabla V_{\lin,n}^k(0,x)+\partial_t V^j_{\lin,n}(0,x)\cdot \partial_t V_{\lin,n}^k(0,x)\right)\,dx=0\\
\label{B8'}
J\geq j\Longrightarrow \lim_{n\to \infty} \int_{\rho_n\leq |x|\leq \sigma_n} \left(\nabla V^j_{\lin,n}(0,x)\cdot \nabla w_n^J(0,x)+ \partial_t V^j_{\lin,n}(0,x)\cdot \partial_t w_n^J(0,x)\right)\,dx=0.
\end{align}

\EMPH{Step 2. Proof of \eqref{B7'}} As usual, we will use that a radial, finite energy solution $v$ of \eqref{lin_CP} satisfies, for some $f\in L^{2}_{\loc}(\RR)$ such that $\dot{f}\in L^2(\RR)$:
$$ rv(t,|r|)=f(t+r)-f(t-r),\quad (t,r)\in \RR^2.$$
Letting $w$ be an other radial solution of \eqref{lin_CP}, such that $rw(t,|r|)=g(t+r)-g(t-r)$, we obtain by a straightforward integration by parts:
\begin{multline}
 \label{B42}
\int_{\rho_n}^{\sigma_n} (\partial_r v(t,r)\partial_rw(t,r)+\partial_tv(t,r)\partial_tw(t,r))r^2dr\\
=
2\int_{\rho_n}^{\sigma_n}\left(\dot{f}(t+r)\dot{g}(t+r)+\dot{f}(t-r)\dot{g}(t-r)\right)\,dr+\rho_nv(t,\rho_n)w(t,\rho_n)-\sigma_nv(t,\sigma_n)w(t,\sigma_n).
\end{multline} 
Let $rV^{j,k}_{\lin}(t,|r|)=f^{j,k}(t+r)-f^{j,k}(t-r)$. Applying \eqref{B42} to $V_{\lin,n}^j$ and $V_{\lin,n}^k$, we get,
\begin{align}
\label{B43}
 \int_{\rho_n\leq |x|\leq \sigma_n} &\left(\nabla V^j_{\lin,n}(0,x)\cdot \nabla V_{\lin,n}^k(0,x)+\partial_t V^j_{\lin,n}(0,x)\cdot \partial_t V_{\lin,n}^k(0,x)\right)\,dx\\
\tag{$A_n$}
&\qquad= \int_{\rho_n}^{\sigma_n} \frac{1}{\lambda_{j,n}^{1/2}} \dot{f}^j\left(\frac{-t_{j,n}+r}{\lambda_{j,n}}\right)\frac{1}{\lambda_{k,n}^{1/2}} \dot{f}^k\left(\frac{-t_{k,n}+r}{\lambda_{k,n}}\right)\,dr\\
\tag{$B_n$}
&\qquad+\int_{\rho_n}^{\sigma_n} \frac{1}{\lambda_{j,n}^{1/2}} \dot{f}^j\left(\frac{-t_{j,n}-r}{\lambda_{j,n}}\right)\frac{1}{\lambda_{k,n}^{1/2}} \dot{f}^k\left(\frac{-t_{k,n}-r}{\lambda_{k,n}}\right)\,dr\\
\tag{$C_n$}
&\qquad-\frac{\sigma_n^{1/2}}{\lambda_{j,n}^{1/2}} V^j\left(\frac{-t_{j,n}}{\lambda_{j,n}},\frac{\sigma_n}{\lambda_{j,n}}\right)
\frac{\sigma_n^{1/2}}{\lambda_{k,n}^{1/2}} V^k\left(\frac{-t_{k,n}}{\lambda_{k,n}},\frac{\sigma_n}{\lambda_{k,n}}\right)
\\
\tag{$D_n$}
&\qquad+ \frac{\rho_n^{1/2}}{\lambda_{j,n}^{1/2}} V^j_{\lin}\left(\frac{-t_{j,n}}{\lambda_{j,n}},\frac{\rho_n}{\lambda_{j,n}}\right)
\frac{\rho_n^{1/2}}{\lambda_{k,n}^{1/2}} V^k_{\lin}\left(\frac{-t_{k,n}}{\lambda_{k,n}},\frac{\rho_n}{\lambda_{k,n}}\right)
\end{align}
By density, we can assume $V^{j,k}_{\lin}(0),\,\partial_tV^{j,k}_{\lin}(0)\in C_0^{\infty}(\RR^3)$. Then there exists a constant $C>0$ such that
\begin{equation}
 \label{B44}
|V^j_{\lin}(t,r)|+|V^{k}_{\lin}(t,r)|\leq \frac{C}{r+1+|t|}.
\end{equation} 
(this follows from the expression $V^j_{\lin}(t,r)=\frac{1}{r}\int_{t-r}^{t+r}\dot{f}^j(s)\,ds$ and the fact that $\dot{f}^j$ is bounded and compactly supported). 

From \eqref{B44}, we see that the term $(C_n)$ goes to zero as $n\to +\infty$ unless (after extraction of a subsequence) the sequences $\left\{\frac{-t_{j,n}}{\lambda_{j,n}}\right\}_n$ and $\left\{\frac{-t_{k,n}}{\lambda_{k,n}}\right\}_n$ converge in $\RR$, and the sequences $\left\{\frac{\sigma_n}{\lambda_{j,n}}\right\}_n$ and $\left\{\frac{\sigma_n}{\lambda_{k,n}}\right\}_n$ converge in $(0,+\infty)$. This is excluded by the pseudo-ortogonality of the sequences of parameters $\{(\lambda_{j,n},t_{j,n})\}_n$, $\{(\lambda_{k,n},t_{k,n})\}_n$. Thus $\lim_{n\to \infty}(C_n)=0$ and by the same proof $\lim_{n\to\infty}(D_n)=0$.

It remains to treat the terms $(A_n)$ and $(B_n)$. We will focus on $(A_n)$, the proof that $(B_n)$ goes to zero is similar. We distinguish two cases.
\begin{itemize}
 \item Assume $\lim_{n\to \infty} \frac{\lambda_{j,n}}{\lambda_{k,n}}\in \{0,+\infty\}$. Using that $\dot{f}^j$ and $\dot{f}^k$ are compactly supported, we see that the domain of integration in the integral defining $(A_n)$ has Lebesgue measure smaller than $C\min(\lambda_{j,n},\lambda_{k,n})$. Hence
$$ |(A_n)|\leq C\frac{\min(\lambda_{j,n},\lambda_{k,n})}{\lambda_{j,n}^{1/2}\lambda_{k,n}^{1/2}}\underset{n\to\infty}{\longrightarrow} 0.$$
\item If (after extraction) $\lim_{n\to \infty} \frac{\lambda_{j,n}}{\lambda_{k,n}}=\ell\in (0,+\infty)$, then we must have, by pseudo-ortho\-go\-na\-li\-ty $\lim_{n\to\infty} \frac{|t_{j,n}-t_{k,n}|}{\lambda_{j,n}}=+\infty$, which shows that the supports of the $j$ and the $k$ terms in $(A_n)$ are disjoint for large $n$, and thus that $A_n=0$ for large $n$. This concludes the proof of \eqref{B7'}.
\end{itemize}
\EMPH{Step 3. Proof of \eqref{B8'}}
In view of \eqref{B7'}, it is sufficient to show \eqref{B8'} for some large $J\geq j$. We write $w_n^J=\frac{1}{r}\left(g_n^J(t+r)-g_n^J(t-r)\right)$. First note that:
\begin{equation}
 \label{B45}
\forall j\leq J,\quad \dot{g}_n^J(\lambda_{j,n}r+t_{j,n})\xrightharpoonup[n\to\infty]{} 0 \text{ in }L^{2}(\RR,dr).
\end{equation} 
This follows easily from \eqref{weak_CV_wJ} and we omit the proof. We have:
\begin{multline}
 \label{B46}
\int_{\rho_n<|x|<\sigma_n} \nabla w_{0,n}^J(x)\cdot \nabla V_{\lin,n}^{J}(0)\,dx+\int_{\rho_n<|x|<\sigma_n} w_{1,n}^J(x)\cdot \partial_t V_{\lin,n}^{J}(0)\,dx =\\
\int_{\rho_n}^{\sigma_n} \dot{g}_n^J(r)\frac{1}{\lambda_{j,n}^{1/2}} \dot{f}^j\left(\frac{-t_{j,n}+r}{\lambda_{j,n}}\right)\,dr+\int_{\rho_n}^{\sigma_n} \dot{g}_n^J(-r)\frac{1}{\lambda_{j,n}^{1/2}} \dot{f}^j\left(\frac{-t_{j,n}-r}{\lambda_{j,n}}\right)\,dr\\
+\sigma_n w_{0,n}^J(\sigma_n) V_{0,n}^j(\sigma_n)-\rho_n w_{0,n}^J(\rho_n)V_{0,n}^j(\rho_n).
\end{multline} 
The map $f\mapsto f(1)$ is a bounded linear form on the space $\hdot_{\rad}(\RR^3)$ of radial $\hdot$ functions. Taking  $J$ large, we can assume
$$ \rho_n^{1/2}w_{0,n}^J(\rho_n\cdot) \xrightharpoonup[n\to\infty]{} 0 \text{ weakly in } \hdot$$
and thus
$$ \lim_{n\to \infty} \rho_n^{1/2} w_{0,n}^J(\rho_n)=0.$$
Thus for large $J$, the boundary terms in \eqref{B46} tend to $0$ as $n\to\infty$. Furthermore
\begin{equation}
 \label{B47}
\int_{\rho_n}^{\sigma_n} \dot{g}_n^J(r)\,\frac{1}{\lambda_{j,n}^{1/2}} \dot{f}^j\left(\frac{-t_n^j+r}{\lambda_{j,n}}\right)\,dr =\int_{\frac{\rho_n-t_{j,n}}{\lambda_{j,n}}}^{\frac{\sigma_n-t_{j,n}}{\lambda_{j,n}}} \lambda_{j,n}^{1/2} \dot{g}^J_n(\lambda_{j,n}r+t_{j,n}) \dot{f}^j(r)\,dr.
\end{equation}
We skip the proof of the following claim, which follows immediately from the dominated convergence theorem: 
\begin{claim}
 Let $I_n$ be a sequence of intervals of $\RR$, $\indic_{I_n}$ the characteristic function of $I_n$, and assume $u_n\xrightharpoonup{}0$ in $L^{2}(\RR)$. Then $\indic_{I_n}u_n\xrightharpoonup{}0$ in $L^{2}(\RR)$.
\end{claim}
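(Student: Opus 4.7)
The plan is to fix any $\phi \in L^2(\RR)$ and prove $\int_{\RR} \indic_{I_n}(x) u_n(x) \phi(x)\,dx \to 0$. Rewriting this pairing as $\int_{\RR} u_n \cdot (\indic_{I_n}\phi)\,dx$, I see the central difficulty: the ``test function'' $\indic_{I_n}\phi$ depends on $n$, so the hypothesis $u_n \rightharpoonup 0$ cannot be applied to it directly. The idea is to upgrade the pointwise behavior of $\indic_{I_n}$ into a strong $L^2$ convergence of $\indic_{I_n}\phi$ via dominated convergence, and then combine this with the uniform boundedness of $\{u_n\}$ in $L^2(\RR)$ (which follows from weak convergence and the uniform boundedness principle).

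More precisely, I would use the standard subsequence trick: to prove weak convergence for the full sequence, it suffices to show that every subsequence admits a further subsequence converging weakly to $0$. Fix any subsequence (relabelled $n$). Write $I_n = (a_n,b_n)$ with $-\infty \leq a_n \leq b_n \leq +\infty$ (whether the endpoints are included is irrelevant as they form a null set). Extract a further subsequence along which $a_n \to a$ and $b_n \to b$ in $[-\infty,+\infty]$, and set $I = (a,b)$. Then $\indic_{I_n}(x) \to \indic_I(x)$ for every $x \in \RR \setminus \{a,b\}$, i.e. almost everywhere. Since $|\indic_{I_n}(x)\phi(x) - \indic_I(x)\phi(x)|^2 \leq 4|\phi(x)|^2 \in L^1(\RR)$, the dominated convergence theorem yields
\begin{equation*}
\indic_{I_n}\phi \longrightarrow \indic_I \phi \quad \text{strongly in } L^2(\RR).
\end{equation*}

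Finally, I would decompose
\begin{equation*}
\int_{\RR} u_n \,\indic_{I_n}\phi \, dx = \int_{\RR} u_n \,\indic_I \phi\,dx + \int_{\RR} u_n \bigl(\indic_{I_n}\phi - \indic_I\phi\bigr)\,dx.
\end{equation*}
The first term tends to $0$ by the weak convergence $u_n \rightharpoonup 0$ tested against the \emph{fixed} function $\indic_I\phi \in L^2(\RR)$. The second is controlled by Cauchy--Schwarz: $\bigl|\int u_n(\indic_{I_n}\phi - \indic_I\phi)\bigr| \leq \|u_n\|_{L^2}\,\|\indic_{I_n}\phi - \indic_I\phi\|_{L^2}$, where the first factor is bounded uniformly in $n$ and the second tends to $0$ by the previous step. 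This gives the desired convergence along the extracted subsequence, and hence, by the subsequence trick, for the full sequence. The only conceptual point is reconciling the $n$-dependence of the test function with the $n$-dependence of $u_n$, and dominated convergence is precisely the tool that does this; no other ingredient is needed.
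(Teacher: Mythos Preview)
Your proof is correct and is exactly the argument the paper has in mind: the authors simply state that the claim ``follows immediately from the dominated convergence theorem'' and omit the details, and your write-up supplies precisely those details (subsequence extraction so that $\indic_{I_n}\phi\to\indic_I\phi$ a.e., dominated convergence to upgrade this to strong $L^2$ convergence, then pairing with the weakly null, norm-bounded $u_n$).
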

Using \eqref{B45} and the claim, we see that the left-hand side in \eqref{B47} goes to $0$ as $n\to\infty$. Similarly, the other integral term in \eqref{B46} tends to $0$, which concludes the proof of Claim \ref{C:ortho}.\qed

\section{Energy channels for profiles}
\label{A:channel_profile}
In this appendix we prove Lemma \ref{L:channel_profile}.
We will need the following preliminary result to treat the case where \eqref{infinite_lim} holds:
\begin{claim}
\label{C:easy_channel} 
Let $u_{\lin}$ be a nonzero radial solution of the linear wave equation \eqref{lin_CP}. Then there exists a radial solution $\tu_{\lin}$ of \eqref{lin_CP} with arbitrarily small energy, and constants $t_0>0$, $\eta>0$ and $\rho\in \RR$ such that
\begin{gather}
 \label{A9}
\forall t\geq t_0,\;\forall |x|>\rho+t,\quad (\tu_{\lin},\partial_t \tu_{\lin})(t,x)=(u_{\lin},\partial_t u_{\lin})(t,x)\\
\label{A10}
\forall t\geq t_0,\quad \int_{|x|>\rho+t}|\nabla \tu_{\lin}(t,x)|^2+(\partial_t\tu_{\lin}(t,x))^2\,dx\geq \eta.
\end{gather}
\end{claim}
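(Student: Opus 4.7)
The plan is to write the radial linear wave in d'Alembert form, $r u_{\lin}(t,r)=\tilde f(t+r)-\tilde f(t-r)$, where $\dot{\tilde f}\in L^2(\RR)$ is not identically zero (since $u_{\lin}\not\equiv 0$). I will construct $\tu_{\lin}$ by prescribing its Cauchy data at a large time $t_0$ to be the spatial truncation $\Psi_{\rho+t_0}(u_{\lin}(t_0),\partial_t u_{\lin}(t_0))$. By finite speed of propagation, agreement with $u_{\lin}$ then persists in the forward exterior cone $\{|x|>\rho+t,\,t\ge t_0\}$, which is exactly \eqref{A9}. The total energy of $\tu_{\lin}$ is, by the definition of $\Psi_R$, the exterior energy of $u_{\lin}$ at time $t_0$, and the channel bound \eqref{A10} will reduce to a lower bound on this same exterior energy for all $t\ge t_0$.

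Combining the d'Alembert representation with the integration-by-parts identity \eqref{IPP1}, I would derive the explicit formula, valid for $\rho+t>0$,
\[
E_{\mathrm{out}}(t;\rho):=\int_{|x|>\rho+t}\bigl(|\nabla u_{\lin}|^2+(\partial_t u_{\lin})^2\bigr)dx=8\pi\!\int_{\rho+2t}^{\infty}\!\dot{\tilde f}^2\,ds+8\pi\!\int_{-\infty}^{-\rho}\!\dot{\tilde f}^2\,ds+4\pi\frac{(\tilde f(2t+\rho)-\tilde f(-\rho))^2}{\rho+t}.
\]
Since all three summands are nonnegative, $E_{\mathrm{out}}(t;\rho)\ge 8\pi\int_{-\infty}^{-\rho}\dot{\tilde f}^2$ for every $t$ with $\rho+t>0$. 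A Cauchy--Schwarz splitting argument (decomposing $\int_{-\rho}^M\dot{\tilde f}$ past a large threshold) gives $(\tilde f(M)-\tilde f(-\rho))^2/M\to 0$ as $M\to\infty$ for any $\dot{\tilde f}\in L^2$, so the boundary term vanishes and, together with the first tail, this yields $\lim_{t\to\infty}E_{\mathrm{out}}(t;\rho)=8\pi\int_{-\infty}^{-\rho}\dot{\tilde f}^2$.

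Given a prescribed target $\varepsilon>0$ for the energy of $\tu_{\lin}$, I then pick parameters in the following order. First, select $\rho\in\RR$ with $0<G(\rho):=\int_{-\infty}^{-\rho}\dot{\tilde f}^2<\varepsilon/(32\pi)$: this is possible because $G$ is continuous (by the absolute continuity of the measure $\dot{\tilde f}^2\,ds$ with respect to Lebesgue), non-increasing, and takes all values between $\|\dot{\tilde f}\|_{L^2}^2>0$ at $\rho=-\infty$ and $0$ at $\rho=+\infty$. Set $\eta:=8\pi G(\rho)>0$. Next, using the limit above, take $t_0>\max(0,-\rho)$ so large that $E_{\mathrm{out}}(t_0;\rho)<\varepsilon$. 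Finally, let $\tu_{\lin}$ be the linear wave solution whose Cauchy data at time $t_0$ equals $\Psi_{\rho+t_0}(u_{\lin}(t_0),\partial_t u_{\lin}(t_0))$.

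The energy of $\tu_{\lin}$ then equals $E_{\mathrm{out}}(t_0;\rho)<\varepsilon$ by conservation and the definition of $\Psi_R$. Finite speed of propagation applied to the difference $\tu_{\lin}-u_{\lin}$ at base time $t_0$---whose Cauchy data vanishes on $\{|x|>\rho+t_0\}$---gives $\tu_{\lin}(t,x)=u_{\lin}(t,x)$ for $t\ge t_0$ and $|x|>(\rho+t_0)+(t-t_0)=\rho+t$, which is \eqref{A9}. Combining \eqref{A9} with the pointwise lower bound $E_{\mathrm{out}}(t;\rho)\ge\eta$ yields \eqref{A10}. The one step that is more than routine bookkeeping is arranging $G(\rho)$ to be positive \emph{and} arbitrarily small: this rests on the continuity of $G$, which comes from the absolute continuity of $\dot{\tilde f}^2\,ds$ (automatic for $L^2$ densities) and would fail for a measure with atoms.
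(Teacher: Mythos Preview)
Your proof is correct and follows essentially the same route as the paper's: both use the d'Alembert representation $r u_{\lin}(t,r)=f(t+r)-f(t-r)$, derive the same explicit exterior-energy identity (your $8\pi$ versus the paper's $2$ is only a normalization convention), bound the boundary term via Cauchy--Schwarz to show it vanishes as $t\to\infty$, choose $\rho$ so that $\int_{-\infty}^{-\rho}\dot f^2$ is small but positive, pick $t_0$ large, and define $\tu_{\lin}$ from $\Psi_{\rho+t_0}(u_{\lin}(t_0),\partial_t u_{\lin}(t_0))$ at $t=t_0$. The only cosmetic difference is that the paper fixes $2\int_{-\infty}^{-\rho_0}\dot f^2=\eps$ exactly while you take an inequality, but this is immaterial.
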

Let us postpone the proof of Claim \ref{C:easy_channel} and prove Lemma \ref{L:channel_profile}.

 First assume that $t_{j,n}=0$ for all $j$. Then by Proposition \ref{P:channel1} or Proposition \ref{P:W+compct}, case \eqref{I:big_supp}, there exists a solution $\tU^j$ of \eqref{CP}, globally defined and scattering in both time directions and positive numbers $R_j$, $\eta_j$ such that
$$ (\tU^j,\partial_t\tU^j)(0,x)=(U^j,\partial_t U^j)(0,x)\text{ if } |x|\geq R_j$$
and the following holds for all $t\geq 0$ or for all $t\leq 0$
$$ \int_{|x|\geq R_j+|t|} \left|\nabla \tU^j(t,x)\right|^2+(\partial_t \tU^j(t,x))^2\,dx\geq \eta_j.$$
In this case the conclusion of the lemma holds with $\rho_{j,n}=\lambda_{j,n}R_j$ and $\tU_{\lin}^j=S(t)\left(\tU^j(0),\partial_t\tU^j(0)\right)$.

Next, assume
$$ \lim_{n\to+\infty}\frac{-t_{j,n}}{\lambda_{j,n}}=+\infty.$$
(The case where this limit is $-\infty$ follows from the change of variable $t\mapsto -t$).
Let $\rho_j\in \RR$, $\eta_j>0$, $t_j>0$ and $\tU^j_{\lin}$ be given by Claim \ref{C:easy_channel}. Let $\tU^j$ be the solution of \eqref{CP} such that $T^+(\tU^j)=+\infty$ and
\begin{equation}
 \label{wave_op}
\lim_{t\to+\infty} \left\|\nabla (\tU^{j}-\tU^j_{\lin})(t,x)\right\|^2_{L^2}+\left\|\partial_t (\tU^{j}-\tU^j_{\lin})(t,x)\right\|^2_{L^2}=0.
\end{equation}
Taking a larger $t_j$ and a smaller $\eta_j>0$ if necessary, we can assume by \eqref{wave_op} and the small data theory
$$ \forall t\geq t_j,\quad \int_{|x|>\rho_j+t}|\nabla \tU^j(t,x)|^2+(\partial_t\tU^j(t,x))^2\,dx\geq \eta_j.$$
Using that $\tU^{j}_{\lin,n}(t,x)=\frac{1}{\lambda_{j,n}^{1/2}}\tU^j_{\lin}\left(\frac{t-t_{j,n}}{\lambda_{j,n}},\frac{x}{\lambda_{j,n}}\right)$ and the analoguous formula for $U^j_{\lin,n}$, we get (taking $n$ large, so that $-t_{j,n}/\lambda_{j,n}\geq t_j$),
$$\vecc{\tU}_{\lin,n}^j(0,x)=\vecc{U}_{\lin,n}^j(0,x)\text{ for } |x|>\rho_j\lambda_{j,n}-t_{j,n}$$
and 
$$ \forall t\geq 0,\quad \int_{|x|\geq \rho_j\lambda_{j,n}+t-t_{j,n}} |\nabla \tU^{j}_n(t,x)|^2+(\partial_t \tU^{j}_n(t,x))^2\,dx \geq \eta_j,$$
which yields the conclusion of the lemma with $\rho_n^j=\rho_j\lambda_{j,n}-t_{j,n}$ which is positive for large $n$.
\qed

\begin{proof}[Proof of Claim \ref{C:easy_channel}]
Using as usual that $U(t,r)=ru_{\lin}(t,|r|)$ is a solution of the transport equation $\partial_t^2U-\partial_r^2U=0$, which is odd in the variable $r$, we get
$$ ru_{\lin}(t,|r|)=f(t+r)-f(t-r),$$
where $f\in L^{2}_{\loc}(\RR)$, $\dot{f}\in L^2(\RR)$ and, for $r\geq 0$,
\begin{align}
 \label{def_f1}
\dot{f}(r)&=\frac{1}{2}\Big(\partial_r(ru_{\lin}(0,r))+\partial_t(ru_{\lin})(0,r)\Big)\\
\label{def_f2}
\dot{f}(-r)&=\frac{1}{2}\Big(-\partial_r(ru_{\lin}(0,r))+\partial_t(ru_{\lin})(0,r)\Big).
\end{align}
Let $t\geq 0$ and $\rho_0\in \RR$. A simple integration by parts yields:
\begin{multline*}
\int_{|x|\geq \rho_0+t}|\nabla u_{\lin}(t,x)|^2+(\partial_t u_{\lin}(t,x))^2\,dx\\=2\int_{\rho_0+t}^{+\infty}\left(\dot{f}^2(t+r)+\dot{f}^2(t-r)\right)\,dr+(\rho_0+t)\left(u_{\lin}(t,\rho_0+t)\right)^2. 
\end{multline*}
Hence:
\begin{multline}
 \label{A40}
\int_{|x|\geq \rho_0+t} |\nabla u_{\lin}(t,x)|^2+(\partial_tu_{\lin}(t,x))^2\,dx\\
=2\int_{\rho_0+2t}^{+\infty} \dot{f}^2(r)\,dr+2\int_{-\infty}^{-\rho_0}\dot{f}^2(r)\,dr+\frac{1}{\rho_0+t}\left(f(\rho_0+2t)-f(-\rho_0)\right)^2.
\end{multline} 
Let $\eps$ be a small positive number. Chose $\rho_0\in \RR$ such that
\begin{equation}
 \label{A40'}
2\int_{-\infty}^{-\rho_0}\dot{f}^2(r)\,dr=\eps.
\end{equation} 
We have, for $R_0>0$ large and $r\geq R_0$
\begin{equation*}
 |f(r)-f(R_0)|=\left|\int_{R_0}^r \dot{f}(s)\,ds\right|\leq \sqrt{r}\sqrt{\int_{R_0}^r \dot{f}^2(s)\,ds},
\end{equation*}
which shows (arguing similarly for negative $r$),
\begin{equation}
 \label{A41} 
\lim_{r\to\pm \infty} \frac{1}{r}f^2(r)=0.
\end{equation} 
By \eqref{A40}, \eqref{A40'} and \eqref{A41}, we get that there exists $t_0>0$ such that
$$\forall t\geq t_0,\quad \eps\leq \int_{|x|\geq \rho_0+t} |\nabla u|^2+(\partial_tu)^2\,dx\leq 2\eps.$$
Letting $\tu_{\lin}$ be the solution of the linear wave equation \eqref{lin_CP} with initial data $\Psi_{\rho_0+t_0}(u(t_0),\partial_tu(t_0))$ at $t=t_0$, we get the conclusion of the claim.
\end{proof}

\bibliographystyle{acm}
\bibliography{toto}
\end{document}